\newtheorem{remark}[theorem]{Remark} 
\title{Accuracy  of spectral element method for  wave, parabolic and Schr\"{o}dinger equations 
\thanks{H. Li and X. Zhang were supported by the NSF grant DMS-1522593. D. Appel\"{o} was supported in part by NSF Grant DMS-1913076. Any conclusions or recommendations expressed in this paper are those of the authors and do not necessarily reflect the views of the NSF.}
}
\author{Hao Li\thanks{Department of Mathematics,
Purdue University,
150 N. University Street,
West Lafayette, IN 47907-2067
  (\email{li2497@purdue.edu}, \email{zhan1966@purdue.edu}).}
  \and Daniel Appel\"{o}\thanks{Department of Computational Mathematics, Science, and Engineering; Department of Mathematics, Michigan State University, East Lansing, MI 48824
(\email{appeloda@msu.edu}).} \and Xiangxiong Zhang\footnotemark[2]}
\begin{document}
\maketitle

\begin{abstract}
 The spectral element method constructed by the $Q^k$ ($k\geq 2$) continuous finite element method with $(k+1)$-point Gauss-Lobatto quadrature on rectangular meshes
is a popular high order scheme for solving wave equations in various applications. It can also be regarded as 
a finite difference  scheme on all Gauss-Lobatto points. We prove that this finite difference scheme is $(k+2)$-order accurate in discrete 2-norm for smooth solutions.   The same proof can be extended to the spectral element method solving linear parabolic and Schr\"odinger equations.
The main result also applies to the spectral element method on curvilinear meshes that can be smoothly mapped to  rectangular meshes on the unit square. \end{abstract}
\begin{keywords}
Spectral element method, Gauss-Lobatto quadrature, superconvergence, the wave equation, parabolic equations, the linear Schr\"odinger equation.
\end{keywords}
\begin{AMS}
 	65M60, 65M15,	65M06
\end{AMS}

\section{Introduction}
Accurate and efficient approximations of solutions to partial differential equations are important to numerous applications arising in engineering and the sciences. In particular for problems whose solutions are of wave type, high order accurate methods are favored as they can control the dispersive errors in wave forms that propagate over vast distances.  

For wave equations and other hyperbolic problems, the two key insights that a numerical analyst can provide to a practitioner comparing methods are: a) if the method is guaranteed to be stable, and b) if the numerical method is guaranteed to be accurate. The first condition is most conveniently guaranteed by selecting a method that is based on a variational formulation such as spectral elements, summation-by-parts 
s and continuous and discontinuous Galerkin finite element methods.  

In recent years many such stable and high order accurate methods  for wave equations have been developed. These include discontinuous Galerkin methods for first order hyperbolic systems 
\cite{HesthavenWarburton02,Monk:2005aa,ChungEngquist06,ChungEngquist09,HesthavenWarburton08,TW_low_storage_curve,MengShuWu15}
and wave equations in second order form \cite{IPDG_Elastic,GSSwave,ChouShuXing2014,Upwind2}, and finite differences with summation by parts operators \cite{Mattsson2012,Mattsson2004,Mattsson2008,Virta2014,Almquist2019,Wang2018,Wang2016}, as well as spectral elements for wave equations \cite{komatitsch1999spectral, komatitsch1999introduction}. 

In this paper we are mainly concerned with the second topic, to provide rigorous estimates on the errors for a method. In particular, we study the rates of convergence of the error,  as measured in norms over nodes for all degree of freedoms, for the spectral element method applied to linear wave and parabolic, and   Schr\"{o}dinger equations. { These three types of equations are fundamentally different, but all of them contain the same second order operator, which can be discretized by the same spectral element method. }

To be precise, we consider the  Lagrangian $Q^k$ ($k\geq 2$) continuous finite element method for solving linear evolution PDEs with a second order operator $\nabla\cdot (\mathbf a(\mathbf x)\nabla u)$ on rectangular meshes implemented by $(k+1)$-point Gauss-Lobatto quadrature for all integrals. This is often referred to as the spectral element method in the literature and this is the notation we will use here. 

For the $Q^k$ spectral element method, it is well known that the standard finite element error estimates  still hold \cite{maday1990optimal}, 
i.e., the error in $H^1$-norm is $k$-th order and the error in $L^2$-norm is $(k+1)$-th order. It is also well known that the Lagrangian $Q^k$ ($k\geq 2$) continuous finite element method is $(k+2)$-th order accurate in the discrete 2-norm over all $(k+1)$-point Gauss-Lobatto quadrature points \cite{wahlbin2006superconvergence, lin1996, chen2001structure}. If using a very accurate quadrature in the finite element method for a variable coefficient operator $\nabla\cdot (\mathbf a(\mathbf x)\nabla u)$, then  $(k+2)$-th order superconvergence at Gauss-Lobatto points holds trivially. However, for the efficiency of having a diagonal mass matrix and for the convenience of implementation,  the most popular method for wave equations is the simplest choice of quadrature, i.e. using $(k+1)$-point Gauss-Lobatto quadrature for $Q^k$ elements in all integrals for both mass and stiffness matrices. In particular in the seismic community, where highly efficient simulation of the elastic wave equation is of important, the spectral method has become the method of choice,  \cite{komatitsch1999spectral, komatitsch1999introduction}.

When using this $(k+1)$-point Gauss-Lobatto quadrature for Lagrangian  $Q^k$  finite element method,  the quadrature nodes coincide with the nodes defining the degrees of freedom,  and the resulting method becomes the so-called spectral element method.  Thus the spectral element method can also be regarded as a finite difference   scheme at all Gauss-Lobatto points. For instance, consider solving $u_{tt}=u_{xx}$ on the interval $[0,1]$ with homogeneous Dirichlet boundary conditions.  Introduce the  uniform grid $0=x_0<x_1<\cdots<x_N<x_{N+1}=1$ with spacing $h = 1/(N+1)$ and $N$ being odd. This grid  gives a uniform partition of the interval $[0,1]$ into uniform intervals $I_k=[x_{2k}, x_{2k+2}]$ $(k=0,\cdots, \frac{N-1}{2})$.  
Then all 3-point Gauss-Lobatto  quadrature points for  intervals $I_k=[x_{2k}, x_{2k+2}]$   coincide with  the grid points $x_i$. 
The  $Q^2$ spectral element method on  intervals $I_k=[x_{2k}, x_{2k+2}]$ $(k=0,\cdots, \frac{N-1}{2})$ is equivalent to the following semi-discrete finite difference scheme \cite{cohen2001higher,li2020superconvergence}:
\begin{subequations}
\label{1d-p2}
\begin{align}
\frac{d^2}{dt^2}u_i &=\frac{u_{i-1}-2u_i+u_{i+1}}{h^2},\quad \mbox{if $i$ is odd;}\\
\frac{d^2}{dt^2}u_i &=\frac{-u_{i-2}+8u_{i-1}-14u_i+8u_{i+1}-u_{i+2}}{4h^2},\quad \mbox{if $i$ is even.}
\end{align}
\end{subequations}
While the truncation error of \eqref{1d-p2} is only second order yet the dispersion error is fourth order, see Section 11 in \cite{cohen2001higher}. Although the dispersion error results can in principle be extended to any order, the derivation and expressions become increasingly cumbersome. Further the dispersion error results are limited to unbounded or periodic domains and do not produce error estimates in the form of a norm of the error.  
{ Other than spectral element methods,
other high order schemes can also be interpreted as a finite difference scheme, such as 
the Fourier pseudo-spectral method \cite{CiCP-26-1335, gottlieb2012stability, cheng2015fourier}.}

In fact, as we have shown in \cite{li2020superconvergence},  it is nontrivial and requires new analysis tools to establish the $(k+2)$-th order superconvergence when $(k+1)$-point Gauss-Lobatto quadrature is used.   In \cite{li2020superconvergence},  $(k+2)$-th order accuracy at all Gauss-Lobatto points of $Q^k$ spectral element method was proven for elliptic equations with Dirichlet boundary conditions. In this paper, we  extend those results and  will prove  that the $Q^k$ spectral element method is a  $(k+2)$-th order accurate  scheme for linear wave, parabolic and  Schr\"{o}dinger equations  with Dirichlet boundary conditions. For Neumann boundary conditions, if $\mathbf a(\mathbf x)$ is diagonal, i.e., there are no mixed second order derivatives in $\nabla\cdot(\mathbf a(\mathbf x) \nabla u)$, 
$(k+2)$-th order accuracy in discrete 2-norm can be proven. When mixed second order derivatives are involved,
only $(k+\frac32)$-th order can be proven for Neumann boundary conditions, and we indeed observe some order loss in numerical tests.
 
 The main contribution of this paper is to explain the order of accuracy of $Q^k$ spectral element method, when the errors are measured only at nodes of degree of freedoms. As mentioned above we consider the case of rectangular elements and a smooth coefficient $\mathbf a(\mathbf x)$ in the term $\nabla\cdot (\mathbf a(\mathbf x)\nabla u)$.  We note that this does include discretizations on regular meshes of curvilinear domains that can be smoothly mapped to rectangular meshes for the unit cube,  e.g., the spectral element method for $\Delta u$ on such a mesh for a curvilinear domain
 is equivalent to the spectral element method for $\nabla\cdot (\mathbf a(\mathbf x)\nabla u)+\mathbf b(\mathbf x)\cdot \nabla u$ on a reference uniform rectangular mesh where $\mathbf a(\mathbf x)$ and $\mathbf b(\mathbf x)$ emerge from the mapping between the curvilinear domain and the unit cube.  It does however not include problems on unstructured quadrilateral meshes where the metric terms typically are non-smooth at element interfaces but we note that the numerical examples that we present indicate that such meshes may still exhibit larger rates than $k+1$.  {  We only consider the semi-discrete schemes for linear equations in this paper. In general, it is straightforward to extend the error estimates to a fully discrete scheme for simple time discretizations, e.g., \cite{wheeler1973priori}. Even though superconvergence in $Q^k$ finite element method without any quadrature can be established for nonlinear equations \cite{chen2001structure}, the result in this paper may no longer hold for generic nonlinear equations since the simplest $(k+1)$-point Gauss-Lobatto quadrature are not accurate enough for nonlinear terms. }
 
This paper is organized as follows. In Section \ref{sec-preliminaries}, we introduce notation and assumptions. In Section \ref{lemma-quadrature-error-estimate}, we review a few standard quadrature estimates. In Section \ref{elliptic-proj-error-estimate}, the superconvergence of elliptic projection is analyzed, which is parallel to the classic error estimation for hyperbolic and parabolic equations by involving elliptic projection of the corresponding elliptic operator, see \cite{wheeler1973priori, sammon1982convergence, dupont19732}. We then prove the main result for homogeneous Dirichlet boundary conditions in Section \ref{error-estimate}, for  the second-order wave equation in Section \ref{error-wave},  parabolic equations   in Section \ref{error-heat}  and linear Schr\"{o}dinger equation in Section \ref{error-schrodinger}. Neumann boundary conditions can be discussed similarly as summarized in Section \ref{sec-Neumann}. For problems with nonhomogeneous Dirichlet boundary conditions, a convenient implementation which maintains the $(k+2)$-th order of accuracy is given in Section \ref{implementation-nonhom}. 
 Numerical tests verifying the estimates are given in Section \ref{numerical-test}.
  Concluding remarks are given in Section \ref{concludingremark}
\section{Equations, notation, and assumptions}
\label{sec-preliminaries}

\subsection{Problem setup}
Let $L$ be a linear  second order differential operator with time dependent coefficients:
\begin{align*}
Lu = -\nabla\cdot(\mathbf a(\mathbf x,t)\nabla u) + \mathbf b(\mathbf x,t)\cdot \nabla u + c(\mathbf x,t)u,
\end{align*} 
 where $\mathbf a(\mathbf x,t)=(a_{ij}(\mathbf x,t))$ is a positive symmetric definite operator for $t\in [0,T]$, i.e. there exist constants $\alpha,\beta >0$ such that $\alpha |\xi|^2\leq \xi^T \mathbf a(\mathbf x,t) \xi \leq \beta |\xi|^2$,
for all $(\mathbf x,t)\in \Omega\times[0,T], \xi \in \mathbbm R^n.$ 
Consider the following two initial-boundary value problems with smooth enough coefficients on 
a rectangular domain  $\Omega=(0,1)\times(0,1)$ with its boundary  $\partial \Omega$: 

Given $0<T<\infty$, find $u( \mathbf x,t)$ on $\bar \Omega \times [0,T]$ satisfying
\begin{equation}\label{heat-equation-homo-bc}
\begin{aligned}
u_t
 = & -Lu + f(\mathbf x,t) & \text{ in } \Omega \times (0,T], \\
u(\mathbf x,t)  = & 0 &\text{ on } \partial\Omega \times [0,T], \\
u(\mathbf x,0) = & u_0(\mathbf x) &\text{ on } \Omega.
\end{aligned}
\end{equation}

Given $0<T<\infty$, find $u(\mathbf x,t)$ on $\bar \Omega \times [0,T]$ satisfying  
\begin{equation}\label{wave-equation-homo-bc}
\begin{aligned}
u_{tt}
 = & -Lu + f(\mathbf x,t) & \text{ in } \Omega \times (0,T], \\
u(\mathbf x,t)  = & 0 &\text{ on } \partial\Omega \times [0,T], \\
u(\mathbf x,0) = & u_0(\mathbf x) ,\quad u_t(\mathbf x,0) = u_1(\mathbf x) &\text{ on } \Omega\times\{t=0\}.
\end{aligned}
\end{equation}
  We use $A(\cdot)$ to denote the   bilinear form: for $u,v \in H^1(\Omega)$,
\begin{align}\label{nonsymmetric-bilinearform}
A(u,v) = \int_{\Omega}\nabla u^T \mathbf a(\mathbf x,t)\nabla v+ \mathbf b(\mathbf x,t)\cdot \nabla u + c(\mathbf x,t)uv \, d\mathbf x. 
\end{align}
 
For convenience, we assume $\Omega_h$ is a uniform rectangular mesh for $\bar\Omega$ and $e=[x_e-h,x_e+h]\times [y_e-h,y_e+h]$ denotes any cell in $\Omega_h$ with  cell center $(x_e,y_e)$.  Though we only discuss uniform meshes, the main result can be easily extended to nonuniform rectangular meshes with smoothly varying cells.   Let $\mbox{\scriptsize $Q^k(e)=\left\{p(x,y)=\sum\limits_{i=0}^k\sum\limits_{j=0}^k p_{ij} x^iy^j,   (x,y)\in e\right\}$ }$,  denote  the set of 
 tensor product of polynomials of degree $k$ on  an element $e$. Then we use $V^h=\{p(x,y)\in C^0(\Omega_h): p|_e \in Q^{k}(e),\quad \forall e\in \Omega_h\}$ to denote the continuous piecewise $Q^{k}$ finite element space on $\Omega_h$ and $V^h_0=\{v_h\in V^h: v_h|_{\partial \Omega}=0 \}.$  Let $(u,v)=\int_\Omega u v d\mathbf x$  and let  $\langle \cdot, \cdot \rangle_h$ and $A_h(\cdot, \cdot)$ denote approximation of the integrals by $(k+1)$-point Gauss-Lobatto quadrature for each spatial variable in each cell. Also,  $u^{(i)}$ will denote the $i$-th time derivative of the function $u(\mathbf x,t)$.
 
 For  the   equations that we are interested in, assume the exact solution $u(\mathbf x,t)\in H_0^1(\Omega)\cap H^2(\Omega)$  for any $t$, and define its discrete elliptic projection $R_hu \in V^h_0$ as
\begin{equation}\label{elliptic-proj}
A_h(R_h u,v_h) = \langle -Lu,v_h \rangle_h,\quad \forall v_h\in V^h_0, \quad 0\leq t \leq T.
\end{equation}
 Also, let  $u_I \in V^h$ denote the piecewise Lagrangian $Q^k$ interpolation polynomial of function $u$ at $(k+1)\times(k+1)$ Gauss-Lobatto points in each rectangular cell.
 
We consider  semi-discrete spectral element schemes whose initial conditions are defined by the elliptic projection and the Lagrange interpolant of the continuous initial data. 

For problem \eqref{heat-equation-homo-bc} the scheme is 
to find $u_h(\mathbf x, t) \in V^h_0$ satisfying
\begin{equation}\label{heat-scheme}
\begin{aligned}
\langle u_h^{(1)}, v_h \rangle_h + A_h(u_h, v_h) = & \langle f, v_h \rangle_h,\quad  \forall v_h \in V^h_0, \\
u_h(0) = & R_hu_0.
\end{aligned}
\end{equation} 

We consider the semi-discrete  spectral element scheme for problem \eqref{wave-equation-homo-bc} with special initial conditions: solve for $u_h( t) \in V^h_0$ satisfying
\begin{equation}\label{wave-scheme}
\begin{aligned}
\langle u_h^{(2)}, v_h \rangle_h + A_h(u_h, v_h) = & \langle f, v_h \rangle_h,\quad  \forall v_h \in V^h_0,\\
u_h(0) = R_hu_0, \quad u_h^{(1)}(0) = & (u_1)_I.
\end{aligned}
\end{equation}
 
\subsection{Notation and basic tools}
We will use the same  notation  as in \cite{li2020superconvergence1, li2020superconvergence}. 

The norm and semi-norms for $W^{k,p}(\Omega)$ and $1\leq p<+\infty$, with standard modification for $p=+\infty$ can be defined as follows,
$$
 \|u\|_{k,p,\Omega}=\left(\sum\limits_{i+j\leq  k}\iint_{\Omega}|\partial_x^i\partial_y^ju(x,y)|^pdxdy\right)^{1/p},
 $$
$$
 |u|_{k,p,\Omega}=\left(\sum\limits_{i+j= k}\iint_{\Omega}|\partial_x^i\partial_y^ju(x,y)|^pdxdy\right)^{1/p}.
 $$
 When there is no confusion, for simplicity, sometimes we may use $\|u\|_{k}$ and $|u|_{k}$ as norm and semi-norm for $H^k(\Omega)=W^{k,2}(\Omega)$ respectively.

 For any $v_h\in V^h$, $1\leq p<+\infty $, and $k\geq 1$,  we define the broken broken Sobolev norms and seminorms by the following symbols,
  $$\|v_h\|_{k,p,\Omega}:= \left(\sum_e\|v_h\|_{k,p,e}^p\right)^{\frac1p}, \quad 
 |v_h|_{k,p,\Omega}:= \left(\sum_e|v_h|_{k,p,e}^p\right)^{\frac1p}.$$
 
  Let $Z_{0,e}$ denote the set of $(k+1)\times(k+1)$ Gauss-Lobatto points of the cell $e$ and $Z_0=\bigcup_e Z_{0,e}$ denote all Gauss-Lobatto points in the mesh $\Omega_h$. Let $\|u\|_{l^2(\Omega)}$ and $\|u\|_{l^{\infty}(\Omega)}$
denote the discrete 2-norm and the maximum norm over $Z_0$ respectively as 
\[\|u\|_{l^2(\Omega)}=\left[h^2\sum_{(x,y)\in Z_0} |u(x,y)|^2\right]^{\frac12},\quad \|u\|_{l^{\infty}(\Omega)}=\max_{(x,y)\in Z_0} |u(x,y)|.\]
 
  When there is no confusion, for simplicity, sometimes we may use $\|u\|_{l^2}$ and $|u|_{l^{\infty}}$ to denote $\|u\|_{l^2(\Omega)}$ and $\|u\|_{l^{\infty}(\Omega)}$ respectively.
  For a continuous function $f(x,y)$, let $f_I(x,y)$ denote its piecewise $Q^k$ Lagrange interpolant at $Z_{0,e}$ on each cell $e$, i.e., $f_I\in V^h$ satisfies:
\[f(x,y)=f_I(x,y), \quad \forall (x,y)\in Z_0. \]
 
 Let $(f,v)_e$ denote the inner product in  $L^2(e)$ and $(f,v)$ denotes the inner product in $L^2(\Omega)$ as
\[(f,v)_e=\iint_{e} fv\, dxdy,\quad (f,v)=\iint_{\Omega} fv\, dxdy=\sum_e (f,v)_e.\]
 Let  $\langle f,v\rangle_h$ denote the approximation to $(f,v)$ by using $(k+1)\times(k+1)$-point Gauss-Lobatto quadrature for integration over each cell $e$.  Then for $k\geq 2$, the $(k+1)\times(k+1)$ Gauss-Lobatto quadrature is exact for 
integration of  tensor product   polynomials of degree $2k-1\geq k+1$ on $\hat K$.

We denote $A^{*}(\cdot, \cdot)$ as the adjoint bilinear form of $A(\cdot, \cdot)$ such that
\[
A^{*}(v, u)=A(u, v)=(\mathbf{a} \nabla u, \nabla v)+(\mathbf{b} \cdot \nabla u, v)+(c u, v).
\]

Let superscript $(i)$ denote $i$-th time derivatives for coefficients $\mathbf a, \mathbf b$, and $c$. 
For the time dependent operators $L$ and $A$, the symbols $L^{(i)}$ and $A^{(i)}$ are defined as taking time derivatives only for coefficients:
\[ 
L^{(i)}u = -\nabla\cdot(\mathbf a^{(i)}\nabla u) + \mathbf b^{(i)}\cdot \nabla u + c^{(i)}u,\]
and
\[A^{(i)}(u,v) = \int_{\Omega}\nabla u^T \mathbf a^{(i)}\nabla v+ \mathbf b^{(i)}\cdot \nabla u + c^{(i)}uv d\mathbf x. \]
The symbol $A^{(i)}_h$ is similarly defined as taking time derivatives only for coefficients in $A_h$. 
With this notation, for $u(\mathbf x,t)$ and time independent test function $v(\mathbf x)$, we have Leibniz rule 
$$(L u)^{(m)}=\sum_{j=0}^m \binom{m}{j} L^{(m-j)} u^{(j)},\quad \left[A(u,v)\right]^{(m)}= \sum_{j=0}^m \binom{m}{j} A^{(m-j)} (u^{(j)}, v).$$

 By integration by parts, it is straightforward to verify
\begin{equation}
\label{notation-formula-1}
(L ^{(m-j)} u^{(j)},v)= A^{(m-j)} (u^{(j)}, v),\quad\forall  v\in H_0^1(\Omega).
\end{equation}

 There exist constants $C_i$ ($i=1,2,3,4$) independent of $h$ such that $l^2$-norm and $L^2$-norm are equivalent for $V^h$:
\begin{equation}\label{norm-equivalence}
\begin{aligned}
C_1\|v_h\|_{l^2} \leq \|v_h\|_0 \leq C_2\|v_h\|_{l^2}, \quad  \forall v \in V^h,\\
C_3\langle v_h, v_h\rangle_h \leq \|v_h\|^2_0 \leq C_4 \langle v_h, v_h\rangle_h, \quad \forall v \in V^h.
\end{aligned}
\end{equation}
We have the inverse inequality for polynomials as
 \begin{equation}
 \|v_h\|_{k+1, e}\leq C h^{-1} \|v_h\|_{k, e},\quad  \forall v_h \in V^h,\, k\geq 0. 
\label{inverseestimate}
 \end{equation}

%
\subsection{Assumption on the coercivity and the elliptic regularity}

For the operator $
A(u,v): =\int_\Omega [\nabla u^T \mathbf a \nabla v +(\mathbf b\cdot \nabla u) v + c u v] \,d\mathbf x $ 
where $\mathbf a=\begin{pmatrix}
               a^{11} & a^{12}\\
               a^{21} & a^{22}
              \end{pmatrix}
$ is positive definite and $\mathbf b=[b^1 \quad b^2]$, 
assume the coefficients $a_{ij}$, $b_j$, $c \in C^{m_1}\left([0,T];W^{m_2,\infty}(\Omega)\right)$ for $m_1$, $m_2$ large enough. Thus for $t\in [0,T]$, $A(u,v)\leq C\|u\|_1\|v\|_1$ for any $u, v\in H^1_0(\Omega)$. As discussed in \cite{li2020superconvergence}, if we assume $\lambda_{\mathbf a}$ has a positive lower bound and $ \nabla \cdot \mathbf b\leq 2c $, where $\lambda_{\mathbf a}$ as the smallest eigenvalues of $\mathbf a$, the coercivity of the bilinear form can be easily achieved. 
For the $V^h$-ellipticity, as pointed out in Lemma 5.2 of \cite{li2020superconvergence}, 
if $4\lambda_{\mathbf a}c > |\mathbf b|^2$, for $t\in[0,T]$,
\begin{equation}\label{vh-ellipticity}
C\|v_h\|_1^2 \leq A_h(v_h,v_h), \quad \forall v_h \in V^h,
\end{equation}
can be proven. In the rest of this paper, we assume coercivity for the bilinear forms $A$, $A^*$, and $A_h$. We  assume the elliptic regularity $\|w\|_2\leq C\|f\|_0$ holds for the exact dual problem of finding $w\in H^1_0(\Omega)$ satisfying
$A^*(w,v)=(f,v),\quad \forall v\in H_0^1(\Omega)$.
 See \cite{savare1998regularity, grisvard2011elliptic} for the elliptic regularity with Lipschitz continuous coefficients on a Lipschitz domain. 

We remark that in the case of the wave equation we also assume finite speed of propagation i.e. that there is an upper bound on the eigenvalues of ${\bf a}$.  

\section{Quadrature error estimates}\label{lemma-quadrature-error-estimate} 
 
For any continuous function $u(\mathbf x,t_0)$ with fixed time $t_0$,
its M-type projection on spatial variables  is a continuous piecewise $Q^k$ polynomial of $\mathbf x$,  denoted as $u_p(\mathbf x,t_0)\in V^h$. The M-type projection was used to analyze superconvergence \cite{chen2001structure}. 
Detailed definition and some useful properties about the M-type projection can be also found in \cite{li2020superconvergence1, li2020superconvergence}. For $m\geq 0$, $\left(u_p\right)^{(m)} = \left(u^{(m)}\right)_p$, thus there is no ambiguity to use the notation $u_p^{(m)}$. 
The M-type projection has the following properties. See Theorem 3.2 in  \cite{li2020superconvergence1} for the detailed proof.

\begin{theorem}
\label{thm-superapproximation}
For $k \geq 2$, 
 \[\|u-u_p\|_{l^2(\Omega)}=\mathcal O(h^{k+2}) \|u\|_{k+2},\quad\forall u\in H^{k+2}(\Omega).\]
 \[\|u-u_p\|_{l^{\infty}(\Omega)}=\mathcal O(h^{k+2}) \|u\|_{k+2,\infty},
 \quad\forall u\in W^{k+2,\infty}(\Omega).\]
\end{theorem}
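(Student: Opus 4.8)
The plan is to reduce the statement to a one-dimensional superconvergence estimate on a single element and then lift it to the mesh through the tensor-product structure of $Q^k$. Since $u_p$ is built cellwise and the nodes in $Z_{0,e}$ form a tensor product of $(k+1)$-point Gauss-Lobatto nodes in each variable, I would first record that on each cell the M-type projection factors as $u_p|_e=\Pi_x\Pi_y u$, where $\Pi_x$ and $\Pi_y$ are the one-dimensional M-type projections in the $x$- and $y$-variable, and that these two operators commute. Both discrete norms over $Z_0$ then decompose into contributions that can be handled one direction at a time.

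The core is the one-dimensional estimate. On the reference interval $\hat I=[-1,1]$ let $L_j$ be the Legendre polynomials and $\ell_j(\xi)=\int_{-1}^{\xi}L_{j-1}(s)\,ds$ the integrated Legendre (Lobatto) functions, so that $\ell_j(\pm1)=0$ for $j\ge2$ and $\ell_j'\propto L_{j-1}$. Writing $\hat u=\sum_{j\ge0}c_j\ell_j$, the one-dimensional M-type projection $\hat\Pi\hat u$ keeps the terms $j\le k$, with $c_0,c_1$ fixed by the endpoint values (which guarantees continuity); by the orthogonality $\int_{\hat I}\ell_m'\ell_j'\,d\xi\propto\delta_{mj}$ the higher coefficients are $c_j\propto\int_{\hat I}\hat u'\,L_{j-1}\,d\xi=\mathcal O(|\hat u|_{j,\hat I})$. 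The decisive algebraic fact is the Legendre identity $(1-\xi^2)L_k'(\xi)=\tfrac{k(k+1)}{2k+1}\big(L_{k-1}(\xi)-L_{k+1}(\xi)\big)$, which shows $\ell_{k+1}\propto(1-\xi^2)L_k'$; hence the $(k+1)$ Gauss-Lobatto nodes, namely $\pm1$ together with the roots of $L_k'$, are exactly the zeros of the leading remainder mode $\ell_{k+1}$. Consequently $\hat u-\hat\Pi\hat u=\sum_{j\ge k+1}c_j\ell_j$ vanishes at each node up to the next term $c_{k+2}\ell_{k+2}$, which after scaling to the physical element $e=[x_e-h,x_e+h]$ carries one extra power of $h$. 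A scaling/Bramble-Hilbert argument then shows that the one-dimensional M-type projection is accurate to order $h^{k+2}$ at the Gauss-Lobatto nodes, both in the discrete 2-norm (with the $H^{k+2}$ seminorm) and pointwise (with the $W^{k+2,\infty}$ seminorm), one full order beyond the generic $h^{k+1}$ nodal rate of a non-interpolatory projection.

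To assemble the two-dimensional bounds I would use the cellwise telescoping identity
\[
u-u_p=(u-\Pi_x u)+\Pi_x(u-\Pi_y u),
\]
and evaluate it at the tensor nodes of $Z_{0,e}$. The first term is controlled by the one-dimensional estimate applied to $u$ in $x$; the second by the one-dimensional estimate applied to $u$ in $y$ and then composed with $\Pi_x$, using that $\Pi_x$ is a fixed finite-dimensional, hence stable, projection on the reference element. Combining the one-dimensional discrete estimates direction by direction, the $l^2$ contribution of each cell is $\mathcal O(h^{k+2})$ with a local $H^{k+2}$ seminorm, and summing over cells via $\sum_e|u|_{k+2,e}^2=|u|_{k+2,\Omega}^2\le\|u\|_{k+2}^2$ gives the first estimate; taking the maximum over nodes and cells with the $W^{k+2,\infty}$ seminorm gives the second.

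The main obstacle is precisely the one-dimensional superconvergence step: proving that the M-type projection gains a full extra order at the Gauss-Lobatto nodes rather than the generic $h^{k+1}$ rate. This gain is not a matter of extra smoothness but rests on the algebraic coincidence that the Gauss-Lobatto nodes are the zeros of $(1-\xi^2)L_k'$, together with the Legendre orthogonality that forces the leading remainder mode to be proportional to $\ell_{k+1}$; it is the design of the M-type basis that arranges this cancellation. Once the one-dimensional gain and the pointwise stability of $\Pi_x$ are established, the tensor-product telescoping and the summation over cells are routine.
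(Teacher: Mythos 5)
Your one-dimensional core is exactly the mechanism behind the paper's proof: the paper itself defers to Theorem 3.2 of its reference on $C^0$-$Q^k$ superconvergence, and that proof is precisely the M-type (integrated-Legendre) expansion argument you describe --- the truncation remainder starts with $c_{k+1}\ell_{k+1}$, and $\ell_{k+1}\propto(1-\xi^2)L_k'$ vanishes exactly at the $(k+1)$ Gauss--Lobatto nodes, so nodal errors gain one order over the generic $h^{k+1}$ rate. That structural fact, the coefficient bounds via Legendre orthogonality, and the Bramble--Hilbert packaging are all the right ingredients.

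The genuine gap is in your two-dimensional assembly of the $l^2$ estimate. You evaluate $u-u_p=(u-\Pi_x u)+\Pi_x(u-\Pi_y u)$ at the tensor nodes and invoke the one-dimensional estimate ``applied to $u$ in $x$'' along each node line $y=y_j$ (and correspondingly in $y$). This requires controlling $|u(\cdot,y_j)|_{H^{k+2}(\mathrm{line})}$ by $\|u\|_{H^{k+2}(e)}$, which is not possible: the restriction of an $H^{k+2}$ function to a line loses half a derivative (the trace is only in $H^{k+3/2}$), and the scaled trace inequality $\|v(\cdot,y_j)\|^2_{L^2_x}\leq C\bigl(h^{-1}\|v\|^2_{L^2(e)}+h|v|^2_{H^1(e)}\bigr)$ applied with $v=\partial_x^{k+2}u$ would demand derivatives of total order $k+3$, beyond the hypothesis $u\in H^{k+2}(\Omega)$; the same obstruction reappears in the second telescoping term. (For the $l^\infty$ bound with $W^{k+2,\infty}$ data the line-wise argument is harmless, but the $l^2$ statement cannot be obtained one direction at a time.) The repair, which is what the cited proof effectively does, is to stay on the two-dimensional cell: for each reference node $\hat z$, the functional $F_{\hat z}(\hat u)=(\hat u-\hat u_p)(\hat z)$ annihilates all of $Q^{k+1}(\hat K)$ --- every discarded term in the tensor expansion of a $Q^{k+1}$ function carries a factor $\ell_{k+1}(x)$ or $\ell_{k+1}(y)$, which vanishes at the nodes --- hence annihilates polynomials of total degree $k+1$, and $F_{\hat z}$ is bounded on $H^{k+2}(\hat K)$ because point evaluation and the finitely many projection coefficients (cell integrals of $\partial_x\partial_y\hat u$, edge traces, vertex values) are bounded there; this also disposes of the infinite tail that you waved away with ``up to the next term $c_{k+2}\ell_{k+2}$.'' A single application of Bramble--Hilbert on the cell then gives $|(u-u_p)(z)|\leq Ch^{k+1}|u|_{k+2,e}$, and summing $h^2\sum_z|\cdot|^2$ over cells yields the claimed $\mathcal O(h^{k+2})$ bound without ever restricting $u$ to lines.
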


By applying Bramble-Hilbert Lemma, we have the following standard quadrature estimates. See \cite{li2020superconvergence1} for the detailed proof. 
\begin{lemma}
\label{rhs-estimate}
 For $f(\mathbf x)$, if $f(\mathbf x) \in H^{k+2}(\Omega)$, then we have $$(f,v_h)-\langle f,v_h\rangle_h =\mathcal O(h^{k+2}) \|f\|_{k+2} \|v_h\|_2,\quad \forall v_h\in V^h.$$
\end{lemma}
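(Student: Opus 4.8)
The plan is to prove the estimate cell by cell and then assemble. First I would write the global quadrature error as a sum of local contributions,
\begin{align*}
(f,v_h)-\langle f,v_h\rangle_h=\sum_e\left[(f,v_h)_e-\langle f,v_h\rangle_{h,e}\right],
\end{align*}
and pull each cell $e$ back to the reference square $\hat K=[-1,1]^2$ by the affine map $\mathbf x=(x_e,y_e)+h\hat{\mathbf x}$. Writing $\hat f,\hat v$ for the pullbacks, the Jacobian yields $(f,v_h)_e-\langle f,v_h\rangle_{h,e}=h^2\,\hat E(\hat f,\hat v)$, where the reference quadrature-error functional is
\begin{align*}
\hat E(\hat f,\hat v)=\int_{\hat K}\hat f\,\hat v\,d\hat{\mathbf x}-\sum_{q}\hat w_q\,\hat f(\hat{\mathbf x}_q)\,\hat v(\hat{\mathbf x}_q).
\end{align*}
This bilinear functional has two structural properties that drive the whole argument: (i) by the exactness of the $(k+1)$-point Gauss--Lobatto rule, $\hat E(\hat f,\hat v)=0$ whenever $\hat f\hat v\in Q^{2k-1}$, in particular when $\hat f\in Q^{k-1}$; and (ii) the Gauss--Lobatto nodes and weights are invariant under the reflections $\hat x\mapsto-\hat x$ and $\hat y\mapsto-\hat y$, so $\hat E$ annihilates every product that is odd in either variable. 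Consequently a monomial $\hat x^{p}\hat y^{q}$ survives in $\hat E$ only if both $p$ and $q$ are even and at least one of them is $\geq 2k$.

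Next I would establish a \emph{refined} reference estimate: a bound of the form $|\hat E(\hat f,\hat v)|\leq C\sum_{(a,b)}|\hat f|_{a,\hat K}\,|\hat v|_{b,\hat K}$ in which every pair satisfies $k\leq a\leq k+2$. To get this I would use the Bramble--Hilbert lemma to split $\hat f=p+(\hat f-p)$, where $p$ is a degree-$(k+1)$ polynomial with $\|\hat f-p\|_{s,\hat K}\leq C|\hat f|_{k+2,\hat K}$ for $s\leq k+2$. Feeding the remainder into $\hat E(\cdot,\hat v)$ and using its boundedness produces the single term $|\hat f|_{k+2,\hat K}\,\|\hat v\|_{0,\hat K}$. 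For the polynomial part $p\in P^{k+1}$ the functional is computed exactly: by the surviving-monomial rule above, a product $\partial_x^{a_1}\hat f\cdot\partial_x^{c_1}\hat v$ can contribute only when $a_1+c_1\geq 2k$ with $c_1\leq k$, which forces $a_1\geq k$ and hence $a\in\{k,k+1\}$ (since $p$ has total degree $\leq k+1$). Thus every term of the refined estimate pairs a derivative of $\hat f$ of order at least $k$ with some derivative of $\hat v$; the ``cheap'' low-order term $|\hat f|_{k,\hat K}\|\hat v\|_{0,\hat K}$ that a naive application of Bramble--Hilbert (using only vanishing on $Q^{k-1}$, which gives merely order $h^{k}$) would leave behind is precisely the one removed by property (ii).

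Finally I would scale back and assemble. Together with the Jacobian $h^2$, a term $|\hat f|_{a,\hat K}|\hat v|_{b,\hat K}$ scales to $h^{a+b}|f|_{a,e}|v_h|_{b,e}$. When $b\leq 2$ I bound $|v_h|_{b,e}\leq\|v_h\|_{2,e}$ and use $a+b\geq k+2$; when $b>2$ I apply the inverse inequality \eqref{inverseestimate} $(b-2)$ times to get $|v_h|_{b,e}\leq Ch^{-(b-2)}\|v_h\|_{2,e}$, which converts the prefactor $h^{a+b}$ into $h^{a+2}$ with $a\geq k$. In either case each cell contributes at most $Ch^{k+2}\|f\|_{k+2,e}\|v_h\|_{2,e}$ (using $a\leq k+2$ so that $|f|_{a,e}\leq\|f\|_{k+2,e}$), and a Cauchy--Schwarz sum over cells, assembling the elementwise norms into global ones, yields $\mathcal O(h^{k+2})\|f\|_{k+2}\|v_h\|_2$.

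I expect the main obstacle to be the second step, namely the symmetry-based gain of the two extra orders: the naive Bramble--Hilbert estimate stalls at $\mathcal O(h^{k})$, and recovering the missing powers of $h$ hinges on showing that the low-order interactions which survive $Q^{2k-1}$-exactness are exactly those killed by the reflection symmetry of the Gauss--Lobatto nodes, so that after the inverse inequality all surviving terms line up at order $h^{k+2}$. The remaining ingredients---the affine scaling, the Taylor/Bramble--Hilbert remainder bound, and the global Cauchy--Schwarz assembly---are routine, and the same machinery will furnish the companion quadrature estimates needed later for $A_h$ and for the time-differentiated coefficients.
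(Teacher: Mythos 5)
Your proposal is correct in substance, but note that the paper does not actually prove this lemma: it states that the result follows from the Bramble--Hilbert lemma and defers all details to \cite{li2019superconvergence}. Your argument is essentially a self-contained reconstruction of that route---cell-by-cell reduction to the reference square, a Bramble--Hilbert splitting $\hat f = p + (\hat f - p)$ with $p \in P^{k+1}$, the $Q^{2k-1}$-exactness of the Gauss--Lobatto rule, inverse estimates to convert surplus derivatives of $v_h$ into powers of $h$, and a Cauchy--Schwarz assembly---so the two approaches agree in all essentials. The one substantive divergence is your property (ii): you present the reflection symmetry of the Gauss--Lobatto nodes as the mechanism that keeps the argument from stalling at $\mathcal O(h^k)$, and your closing paragraph calls this the main obstacle; in fact that ingredient, while true, is dispensable. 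Exactness alone already forces the needed pairing: a product of a $p$-monomial $\hat x^{a_1}\hat y^{a_2}$ with a $\hat v$-monomial $\hat x^{c_1}\hat y^{c_2}$ escapes exactness only if (say) $a_1+c_1\geq 2k$, and since $c_1\leq k$ and $a_1+a_2\leq k+1$ this forces both $a_1\geq k$ and $c_1\geq k-1$; hence every surviving term pairs $a\geq k$ with $b\geq k-1$ and satisfies $a+b\geq 2k\geq k+2$, which is all your final step uses. Parity merely eliminates additional terms that are already of order $k+3$ or higher. One detail you should make explicit in a full write-up, since your notation $\partial_x^{a_1}\hat f\cdot\partial_x^{c_1}\hat v$ quietly assumes it: the degree-$a$ coefficients of $p$ must be controlled by seminorms of $\hat f$ of order $\geq k$ only (if they were bounded merely by $\|\hat f\|_{k+2,\hat K}$, the badly-scaling low-order seminorms of $f$ would enter and destroy the estimate). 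This does follow from the Deny--Lions bounds themselves, via $D^\beta p = D^\beta\hat f - D^\beta(\hat f-p)$, which gives $\|D^\beta p\|_{0,\hat K}\leq |\hat f|_{|\beta|,\hat K}+C|\hat f|_{k+2,\hat K}$, but it deserves a line of justification.
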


The next lemma shows the superconvergence of the bilinear form with Gauss-Lobatto quadrature $A_h$, and it collects the results of Lemma 4.5 - Lemma 4.8 of \cite{li2020superconvergence}.
\begin{lemma}\label{bilinear-u-up}
For $i,j \geq 0$ and any fixed $t\in[0,T]$, assuming sufficiently smooth coefficients $\mathbf a,\mathbf b, c$ and function $u(\mathbf x,t)\in H^{(k+3)}(\Omega)$, we have
\begin{align}
\label{bilinear-form-estimate}
A^{(i)}_h((u-u_p)^{(j)},v_h)=\begin{cases}
                         \mathcal O(h^{k+2})\|u^{(j)}(t)\|_{k+3}\|v_h\|_2,\quad \text{if } v_h\in V^h_0 \text{ or $\mathbf a$ is diagonal;}  \\
                         \mathcal O(h^{k+\frac{3}{2}})\|u^{(j)}(t)\|_{k+3}\|v_h\|_2,\quad \text{ otherwise.}
                        \end{cases}
\end{align}
\end{lemma}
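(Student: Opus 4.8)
The plan is to reduce the statement to the superconvergence properties of the M-type projection for a single smooth function and then to track how the three constituent terms of $A_h^{(i)}$ propagate those properties. First I would use the commutation identity $u_p^{(j)}=(u^{(j)})_p$ to rewrite $(u-u_p)^{(j)}=u^{(j)}-(u^{(j)})_p$, so that, after renaming $g:=u^{(j)}\in H^{k+3}(\Omega)$, it suffices to bound $A_h^{(i)}(g-g_p,v_h)$ where $g_p$ is the M-type projection of $g$. Since the time derivatives $\mathbf a^{(i)},\mathbf b^{(i)},c^{(i)}$ are again smooth coefficients, the superscript $(i)$ plays no structural role and I would drop it, writing $w:=g-g_p$ and splitting
\[
A_h(w,v_h)=\underbrace{\langle \mathbf a\nabla w,\nabla v_h\rangle_h}_{\text{stiffness}}+\underbrace{\langle \mathbf b\cdot\nabla w,v_h\rangle_h}_{\text{advection}}+\underbrace{\langle c\,w,v_h\rangle_h}_{\text{reaction}}.
\]

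The reaction term is immediate: a discrete Cauchy--Schwarz inequality over $Z_0$ together with Theorem \ref{thm-superapproximation} and the norm equivalence \eqref{norm-equivalence} gives $|\langle c\,w,v_h\rangle_h|\le C\|c\|_\infty\|w\|_{l^2}\|v_h\|_{l^2}=\mathcal O(h^{k+2})\|g\|_{k+2}\|v_h\|_0$, with no boundary restriction needed. For the advection term and the non-mixed stiffness terms $\langle a^{11}\partial_x w,\partial_x v_h\rangle_h$ and $\langle a^{22}\partial_y w,\partial_y v_h\rangle_h$, I would exploit the defining one-dimensional superconvergence of the M-type projection: the quadrature pairing of $\partial_x w$ against an element of $V^h$ in the same coordinate direction is $\mathcal O(h^{k+2})$. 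Because the coefficients are not piecewise polynomial, this cannot be applied directly; the remedy is to expand each coefficient into an interpolating polynomial of sufficiently high degree plus a smooth remainder, absorb the polynomial part into the test function so that the tensor-product superconvergence applies, and estimate the remainder using that it is $\mathcal O(h^{k+1})$ together with the inverse inequality \eqref{inverseestimate}. Since only $w$ (and not $v_h$) carries a derivative in the advection term, and since both derivatives act in the same direction in the non-mixed stiffness terms, no integration by parts across coordinates is required and the full rate $\mathcal O(h^{k+2})$ survives.

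The crux is the mixed stiffness terms $\langle a^{12}\partial_x w,\partial_y v_h\rangle_h$ and $\langle a^{21}\partial_y w,\partial_x v_h\rangle_h$, which are exactly the terms absent when $\mathbf a$ is diagonal. Here the two derivatives act in orthogonal directions, so the one-dimensional superconvergence of the M-type projection is not directly available. The plan is to move one derivative by a summation-by-parts (discrete integration by parts compatible with Gauss--Lobatto quadrature) identity, producing a bulk term in which both derivatives again act favorably and which therefore superconverges at rate $h^{k+2}$, plus boundary contributions along $\partial\Omega$. When $v_h\in V^h_0$ these boundary terms vanish and the rate $\mathcal O(h^{k+2})$ is recovered; this explains the first branch of \eqref{bilinear-form-estimate}.

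I expect the main obstacle to be the boundary contribution in the remaining case $v_h\notin V^h_0$ with non-diagonal $\mathbf a$. The boundary integral concentrates on a layer of cells of width $\mathcal O(h)$ adjacent to $\partial\Omega$, so estimating it requires a trace-type boundary superapproximation bound for $\partial_x w$ together with a discrete trace inequality for $v_h$. Because the boundary has codimension one, the $L^2(\partial\Omega)$ measure costs one factor of $h^{1/2}$ relative to the interior estimate, so the boundary term superconverges only at rate $h^{k+3/2}$, which caps the overall estimate and yields the second branch of \eqref{bilinear-form-estimate}. Keeping the coefficient-expansion remainders, the inverse-inequality powers, and the half-order trace loss consistent across all four stiffness sub-terms will be the most delicate bookkeeping.
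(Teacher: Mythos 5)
Your overall architecture is the right one, and in fact it mirrors the proofs this lemma rests on: the paper itself gives no self-contained argument for Lemma \ref{bilinear-u-up} but simply collects Lemmas 4.5--4.8 of \cite{li2020superconvergence}, and those proofs proceed exactly along your lines --- reduce to $g=u^{(j)}$ via $(u_p)^{(j)}=(u^{(j)})_p$, treat the differentiated coefficients $\mathbf a^{(i)},\mathbf b^{(i)},c^{(i)}$ as just another set of smooth coefficients, split $A_h^{(i)}$ into reaction, advection, diagonal-stiffness and mixed-stiffness pieces, dispose of the reaction term by Theorem \ref{thm-superapproximation} and \eqref{norm-equivalence}, and isolate the mixed terms $\langle a^{12}\partial_x w,\partial_y v_h\rangle_h$ as the sole source of the dichotomy in \eqref{bilinear-form-estimate}: after (discrete) integration by parts the interior interface contributions cancel by continuity of $v_h$, the domain-boundary contribution vanishes when $v_h\in V^h_0$ or when $\mathbf a$ is diagonal (no mixed term exists), and otherwise a trace-type estimate costs precisely a factor $h^{1/2}$, which is the content of Remark \ref{rmk-loss}. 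So the skeleton, and in particular your diagnosis of where $k+\tfrac32$ comes from, matches the cited proofs.

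The one step that would fail as written is your coefficient treatment in the advection and diagonal-stiffness terms: you propose to interpolate the coefficient by a polynomial of ``sufficiently high degree'' (your $\mathcal O(h^{k+1})$ remainder presupposes degree $\ge k$), absorb it into the test function, and then invoke the tensor-product superconvergence. But that superconvergence is an orthogonality statement: the leading term of $\partial_x(u-u_p)$ is proportional to the Legendre polynomial $L_k(x)$, which is annihilated by integration against polynomials of $x$-degree at most $k-1$, in combination with exactness of the $(k+1)$-point Gauss--Lobatto rule only up to degree $2k-1$. Since $\partial_x v_h$ already has $x$-degree $k-1$, multiplying it by a coefficient interpolant of degree $\ge 1$ in $x$ pushes the product out of the orthogonality range, and a degree-$k$ interpolant additionally breaks quadrature exactness, so the ``absorbed'' term cannot be killed by the mechanism you name. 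The repair used in the literature you are reconstructing goes in the opposite direction: expand the coefficient to \emph{low} order, i.e., freeze it at the cell center, handle the frozen (constant) part by the pure orthogonality/exactness computation, and pair the $\mathcal O(h)$ Taylor remainder with the $\mathcal O(h^{k+1})$ bound on $\nabla(u-u_p)$; this yields $\mathcal O(h^{k+2})\|u^{(j)}\|_{k+3}\|v_h\|_2$ with no inverse inequality needed, and it is this bookkeeping that makes all four stiffness sub-terms close consistently.
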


The following results are Lemma 3.5, Theorem 3.6, Theorem 3.7 in \cite{li2020superconvergence}.

\begin{lemma}
\label{lemma-quaderror-2norm}
 If $f\in H^{2}(\Omega)$ or $f\in V^h$, we have $$(f,v_h)-\langle f,v_h\rangle_h =\mathcal O(h^2) |f|_{2} \|v_h\|_0,\quad\forall v_h\in V^h.$$
\end{lemma}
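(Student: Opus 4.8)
The plan is to establish the estimate cell by cell on the reference element $\hat K=[-1,1]^2$ and then scale. For a fixed cell $e$ write the local quadrature error as
$$
E_e(f,v_h)=\int_e f v_h\,d\mathbf{x}-Q_e(f v_h),
$$
where $Q_e$ denotes the tensor-product $(k+1)$-point Gauss-Lobatto rule on $e$, so that $(f,v_h)-\langle f,v_h\rangle_h=\sum_e E_e(f,v_h)$. Pulling back through the affine map $F:\hat K\to e$, $\mathbf{x}=(x_e,y_e)+h\hat{\mathbf{x}}$ with Jacobian $h^2$, both the exact integral and the quadrature sum acquire the factor $h^2$, so $E_e(f,v_h)=h^2\,\hat E(\hat f,\hat v)$, where $\hat E(\hat f,\hat v)=\int_{\hat K}\hat f\hat v-\hat Q(\hat f\hat v)$ is the reference quadrature-error functional and $\hat f,\hat v$ are the pullbacks of $f,v_h$.

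The heart of the argument is the reference-element bound
$$
|\hat E(\hat f,\hat v)|\leq C\,|\hat f|_{2,\hat K}\,\|\hat v\|_{0,\hat K},\qquad \forall\,\hat f\in H^2(\hat K),\ \forall\,\hat v\in Q^k(\hat K).
$$
I would fix $\hat v\in Q^k(\hat K)$ and regard $\hat f\mapsto \hat E(\hat f,\hat v)$ as a linear functional on $H^2(\hat K)$. Two properties drive the estimate. First, it is continuous with a bound proportional to $\|\hat v\|_{0,\hat K}$: the integral term is controlled by Cauchy-Schwarz, while the quadrature sum is controlled by combining the Sobolev embedding $H^2(\hat K)\hookrightarrow C^0(\hat K)$ (valid in two dimensions, which legitimizes pointwise evaluation of $\hat f$) with the finite-dimensional norm equivalence $\|\hat v\|_{0,\infty,\hat K}\leq C\|\hat v\|_{0,\hat K}$ on $Q^k(\hat K)$. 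Second, the functional annihilates all $\hat f$ of total degree at most one: such $\hat f$ lie in $Q^{k-1}(\hat K)$ because $k\geq 2$, hence $\hat f\hat v$ is a tensor-product polynomial of degree at most $2k-1$ in each variable and is integrated exactly by the Gauss-Lobatto rule, whence $\hat E(\hat f,\hat v)=0$. Taking the supremum over $\hat v$ with $\|\hat v\|_{0,\hat K}=1$ produces a continuous seminorm of $\hat f$ that vanishes on polynomials of degree $\leq 1$, and the Bramble-Hilbert lemma upgrades continuity to the seminorm bound displayed above.

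It remains to scale back and sum. Under $F$ the homogeneous seminorms transform as $|\hat f|_{2,\hat K}=h\,|f|_{2,e}$ and $\|\hat v\|_{0,\hat K}=h^{-1}\|v_h\|_{0,e}$, so
$$
|E_e(f,v_h)|=h^2|\hat E(\hat f,\hat v)|\leq C h^2\,(h|f|_{2,e})(h^{-1}\|v_h\|_{0,e})=C h^2\,|f|_{2,e}\,\|v_h\|_{0,e}.
$$
Summing over cells and applying the discrete Cauchy-Schwarz inequality gives $|(f,v_h)-\langle f,v_h\rangle_h|\leq C h^2\big(\sum_e|f|_{2,e}^2\big)^{1/2}\big(\sum_e\|v_h\|_{0,e}^2\big)^{1/2}=C h^2|f|_2\|v_h\|_0$, which is the claim. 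The cellwise formulation covers both $f\in H^2(\Omega)$ and $f\in V^h$, for which $|f|_2$ is understood elementwise and each $f|_e$ is a polynomial, so the pointwise quadrature evaluation is unproblematic.

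The main obstacle is the reference-element bound, and within it the bilinear bookkeeping: one must apply Bramble-Hilbert to the $\hat f$-functional while carrying the correct $\|\hat v\|_{0,\hat K}$ dependence through the continuity estimate. It is precisely the hypothesis $k\geq 2$ --- ensuring that degree-$\leq 1$ polynomials sit inside $Q^{k-1}$ and that $\hat f\hat v$ stays within the degree-$(2k-1)$ exactness window for linear $\hat f$ --- that yields the second-order rate measured by $|f|_2$ rather than a lower-order estimate.
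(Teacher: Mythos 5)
Your proof is correct and follows essentially the same route as the paper's source for this lemma (the paper cites Lemma 3.5 of \cite{li2020superconvergence}, whose argument is the standard one): a cellwise pullback to the reference element, boundedness of the quadrature-error functional on $H^2(\hat K)$ via the Sobolev embedding and finite-dimensional norm equivalence, annihilation of degree-one polynomials by the $Q^{2k-1}$ exactness of the $(k+1)$-point Gauss--Lobatto rule for $k\geq 2$, Bramble--Hilbert, and scaling plus Cauchy--Schwarz over cells. Your bookkeeping of the scaling factors ($|\hat f|_{2,\hat K}=h|f|_{2,e}$, $\|\hat v\|_{0,\hat K}=h^{-1}\|v_h\|_{0,e}$) and the handling of the broken seminorm for $f\in V^h$ are both accurate, so nothing is missing.
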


\begin{lemma}\label{a-ah-1norm}
Assume all coefficients of \eqref{nonsymmetric-bilinearform} are in $L^{\infty}\left([0,T];W^{2,\infty}(\Omega)\right)$. We have 
\[ A(z_h,v_h)-A_h(z_h,v_h)=\mathcal O(h) \|v_h\|_2 \|z_h\|_1, \quad\forall v_h,z_h\in V^h.\]
\end{lemma}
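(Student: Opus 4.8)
The plan is to split the quadrature error over the three pieces of the bilinear form and over the cells of $\Omega_h$, and to estimate each cellwise contribution with the (cellwise form of the) quadrature estimate in Lemma~\ref{lemma-quaderror-2norm}, which gives $(f,g)_e-\langle f,g\rangle_{h,e}=\mathcal O(h^2)|f|_{2,e}\|g\|_{0,e}$ whenever $f|_e\in H^2(e)$ and $g|_e$ is a polynomial. Writing
\[
A(z_h,v_h)-A_h(z_h,v_h)=\sum_e\Big[(\mathbf a\nabla z_h,\nabla v_h)_e-\langle\mathbf a\nabla z_h,\nabla v_h\rangle_{h,e}\Big]+(\text{convection})+(\text{reaction}),
\]
for each term I will choose which factor carries the differentiation and which plays the role of $g$ (placed in the undifferentiated $\|\cdot\|_0$ slot), so as to keep the number of derivatives falling on $v_h$ as small as possible. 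One point that needs care throughout is that products such as $a_{ij}\partial_j v_h$ and $b_i v_h$ are only piecewise smooth (they are neither globally $H^2$ nor in $V^h$), so Lemma~\ref{lemma-quaderror-2norm} must be invoked cell by cell; this is legitimate since that estimate is proved via Bramble--Hilbert on the reference element.

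For the reaction term I take $f=cv_h$ and $g=z_h$; Leibniz together with $c\in W^{2,\infty}$ gives $|cv_h|_{2,e}\le C\|v_h\|_{2,e}$, so the cellwise error is $\mathcal O(h^2)\|v_h\|_{2,e}\|z_h\|_{1,e}$. For the convection term $\sum_i(b_i\partial_i z_h,v_h)$ I take $f=b_i v_h$ and $g=\partial_i z_h$; since $v_h$ is \emph{not} differentiated inside $f$, again $|b_i v_h|_{2,e}\le C\|v_h\|_{2,e}$ while $\|\partial_i z_h\|_{0,e}\le|z_h|_{1,e}$, giving a cellwise error $\mathcal O(h^2)\|v_h\|_{2,e}\|z_h\|_{1,e}$. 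Both of these are in fact one power of $h$ better than needed.

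The diffusion term $\sum_{ij}(a_{ij}\partial_i z_h,\partial_j v_h)$ is the genuine obstruction and fixes the final rate. Here both factors are already differentiated, so in applying Lemma~\ref{lemma-quaderror-2norm} I am forced to put a differentiated factor into the $H^2$ slot. The correct choice is $f=a_{ij}\partial_j v_h$ and $g=\partial_i z_h$ (rather than the reverse), since more derivatives are budgeted on $v_h$ than on $z_h$. Now the Leibniz expansion of $|a_{ij}\partial_j v_h|_{2,e}$ reaches the third derivative $a_{ij}\partial_j(\partial^2 v_h)$, so $|f|_{2,e}\le C\big(\|v_h\|_{2,e}+|v_h|_{3,e}\big)$. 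The inverse inequality \eqref{inverseestimate} converts $|v_h|_{3,e}\le Ch^{-1}\|v_h\|_{2,e}$, whence $|f|_{2,e}\le Ch^{-1}\|v_h\|_{2,e}$ and the cellwise error degrades to $\mathcal O(h^2)\cdot h^{-1}\|v_h\|_{2,e}\|z_h\|_{1,e}=\mathcal O(h)\|v_h\|_{2,e}\|z_h\|_{1,e}$.

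Summing the three contributions over all cells and applying the discrete Cauchy--Schwarz inequality with the broken-norm definitions $\|v_h\|_{2}=\big(\sum_e\|v_h\|_{2,e}^2\big)^{1/2}$ and $\|z_h\|_1=\big(\sum_e\|z_h\|_{1,e}^2\big)^{1/2}$ then yields $\mathcal O(h)\|v_h\|_2\|z_h\|_1$, with the diffusion term dominating. The essential mechanism, and the reason the rate cannot be pushed past $\mathcal O(h)$ by this argument, is precisely the loss of one power of $h$ through the inverse inequality in the diffusion term: there, unlike in the reaction and convection terms, no undifferentiated factor is available to absorb the $H^2$ seminorm. The main obstacle is therefore to make the derivative bookkeeping of this single term sharp, and to confirm that the inverse-inequality loss is unavoidable in this crude estimate.
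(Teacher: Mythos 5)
Your proof is correct and is essentially the intended argument for this lemma: the paper itself only quotes the result from \cite{li2020superconvergence} (as its Theorem 3.6), and the underlying proof there proceeds exactly as you do --- cellwise Bramble--Hilbert quadrature estimates with the coefficient-times-$\partial_j v_h$ factor placed in the $H^2$ slot, the inverse inequality $|v_h|_{3,e}\leq Ch^{-1}\|v_h\|_{2,e}$ absorbing the extra derivative in the diffusion term (the sole source of the $\mathcal O(h)$ rate), and Cauchy--Schwarz over the cells. Your cell-by-cell use of Lemma~\ref{lemma-quaderror-2norm} with a discontinuous piecewise-polynomial second argument is also consistent with how the present paper applies that lemma, e.g.\ in \eqref{extra-convection-estimate}.
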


\begin{lemma}
\label{bilinear-quadrature-error}
For the differential operator $L$ and any fixed $t\in[0,T]$, assume $a_{ij}(\mathbf x,t)$, $b_{i}(\mathbf x,t)$, $c(\mathbf x,t)\in L^{\infty}\left([0,T];W^{k+2,\infty}(\Omega)\right)$ and $u(\mathbf x, t)\in H^{k+3}(\Omega)$. For $k \geq 2$, we have 
\begin{align}
\label{xz-bilinear-quad-estimate}
\resizebox{\textwidth}{!}
     {$
A(u, v_h)-A_h(u, v_h)=\begin{cases}
                         \mathcal O(h^{k+2})\|u(t)\|_{k+3}\|v_h\|_2,\quad \text{if } v_h\in V^h_0 \,\, \mbox{or} \,\, {(\mathbf a\nabla u)\cdot\mathbf n =  0 \,\, \text{on} \,\, \partial\Omega} \\
                         \mathcal O(h^{k+\frac{3}{2}})\|u(t)\|_{k+3}\|v_h\|_2,\quad \text{ otherwise}
                        \end{cases},
                        $}
\end{align}
{ where $\mathbf n$ denotes the unit vector normal to the domain boundary $\partial\Omega$.  }
\end{lemma}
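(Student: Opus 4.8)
The plan is to isolate the one genuinely hard piece and dispatch the rest with the quadrature estimates already in hand. By the bilinearity of both $A$ and the Gauss--Lobatto form $A_h$, I would write $A(u,v_h)-A_h(u,v_h)$ as the sum of a reaction part $(cu,v_h)-\langle cu,v_h\rangle_h$, a convection part $(\mathbf b\cdot\nabla u,v_h)-\langle\mathbf b\cdot\nabla u,v_h\rangle_h$, and a principal part $(\mathbf a\nabla u,\nabla v_h)-\langle\mathbf a\nabla u,\nabla v_h\rangle_h$. For the reaction and convection parts I would set $f=cu$ and $f=\mathbf b\cdot\nabla u$ respectively; since $\mathbf a,\mathbf b,c$ are smooth and $u(t)\in H^{k+3}(\Omega)$, both choices satisfy $f\in H^{k+2}(\Omega)$ with $\|f\|_{k+2}\leq C\|u(t)\|_{k+3}$, so Lemma~\ref{rhs-estimate} bounds each of them unconditionally by $\mathcal O(h^{k+2})\|u(t)\|_{k+3}\|v_h\|_2$. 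These carry no boundary dependence, so the entire dichotomy in the statement must come from the principal part.

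The essential obstruction in the principal part is that the derivative now falls on $v_h$, so Lemma~\ref{rhs-estimate} no longer applies. I would first point out what does \emph{not} work: one cannot replace $u$ by its $M$-type projection $u_p$ and invoke Lemma~\ref{bilinear-u-up}, because the leftover term $(\mathbf a\nabla u_p,\nabla v_h)-\langle\mathbf a\nabla u_p,\nabla v_h\rangle_h$ is a quadrature error between two piecewise polynomials, for which only the first-order bound of Lemma~\ref{a-ah-1norm} is available; the smoothness of $u$ is precisely what makes the error high order and must be retained. Instead I would estimate the principal part directly, cell by cell, exploiting the tensor-product structure of the $(k+1)$-point Gauss--Lobatto rule, and separate the diagonal pairings $a^{ii}\,\partial_i u\,\partial_i v_h$ from the off-diagonal ones $a^{ij}\,\partial_i u\,\partial_j v_h$ with $i\neq j$.

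For a single product, write $g:=a^{ij}\,\partial_i u$, which is smooth, and let $g_I$ be its degree-$k$ Gauss--Lobatto nodal interpolant; since the quadrature sees only nodal values, $\langle g,\partial_j v_h\rangle_h=\langle g_I,\partial_j v_h\rangle_h$. Applying the one-dimensional Gauss--Lobatto error estimate in each tensor direction, I would reduce the term to integrals of the interpolation error $g-g_I$ against $\partial_j v_h$ and low-degree polynomials. Because $g-g_I$ vanishes at the Gauss--Lobatto nodes, its pairing with such polynomials gains orders through the super-orthogonality of the interpolation error, and this is where the exponent $k+2$ and the regularity $u\in H^{k+3}$ enter. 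For a diagonal term the differentiated factor $\partial_i v_h$ drops one degree in the direction of differentiation while $v_h$ stays undifferentiated in the orthogonal direction, so these estimates close with no surviving boundary term and deliver the full $\mathcal O(h^{k+2})$; this is exactly why a diagonal $\mathbf a$ always yields $k+2$. For an off-diagonal term the differentiation is crossed, and to control $\partial_j v_h$ I would integrate by parts in the $j$-th direction: contributions from interior edges cancel because $v_h$ and $g-g_I$ are continuous across element interfaces, but the edges on $\partial\Omega$ leave a boundary functional essentially of the form $\int_{\partial\Omega}(g-g_I)\,v_h\,n_j$.

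I expect this boundary functional to be the main obstacle and the exact source of the half-order loss. When $v_h\in V^h_0$ it vanishes identically, since $v_h=0$ on $\partial\Omega$ kills both its exact and its quadrature value, so the off-diagonal terms also reach $\mathcal O(h^{k+2})$. Otherwise it is an edge integral of a factor of size $\mathcal O(h^{k+1})$ summed over the $\mathcal O(h^{-1})$ boundary edges, and converting it into a bound in terms of $\|v_h\|_2$ forces a trace/edge inequality that costs a factor $h^{-1/2}$, leaving only $\mathcal O(h^{k+3/2})$. Assembling the reaction, convection, diagonal, and off-diagonal contributions then gives the stated estimate, with the two cases governed solely by whether this boundary functional survives. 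The points I would expect to demand the most care are the super-orthogonality bookkeeping for $(g-g_I,\partial_j v_h)$ and the sharp trace estimate that produces the $h^{1/2}$ loss.
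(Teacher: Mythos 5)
First, a structural point: the paper does not prove this lemma at all --- it is imported verbatim as ``Lemma 3.5, Theorem 3.6, Theorem 3.7 in \cite{li2020superconvergence}'', so the only fair comparison is with the companion paper's argument. Your overall architecture does mirror that argument: splitting off the reaction and convection parts and dispatching them with Lemma \ref{rhs-estimate}, isolating the principal term as the hard case, distinguishing diagonal from off-diagonal entries of $\mathbf a$, and attributing the $V^h_0$-versus-$V^h$ dichotomy to a domain-boundary functional produced by integration by parts whose interior-edge contributions cancel by continuity (this is exactly the mechanism behind Remark \ref{rmk-loss}). Your observation that Lemma \ref{a-ah-1norm} alone cannot absorb a polynomial-times-polynomial quadrature leftover is also correct and important.

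The gaps sit precisely at the steps you defer. The central one: you attribute to the Gauss--Lobatto Lagrange interpolant $g_I$ two properties that no single object possesses. Exact agreement under $\langle\cdot,\cdot\rangle_h$ is indeed a property of $g_I$, but ``super-orthogonality'' of the error is not: in each variable the leading term of $g-g_I$ is proportional to $L_{k+1}-L_{k-1}$, which is orthogonal only to $P^{k-2}$, whereas $\partial_j v_h$ has degree $k-1$ in the matched direction and degree $k$ in the crossed one. Carrying out your pairing $(g-g_I,\partial_i v_h)$ therefore leaves a surviving term that, after summing over cells and using inverse estimates on $v_h$, is only $\mathcal O(h^{k+1})\|v_h\|_2$ --- \emph{even in the diagonal case}. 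This is exactly why the framework of the paper and of \cite{li2020superconvergence} is built on the M-type projection $u_p$ (strong orthogonality, but only superclose --- not exact --- nodal values, cf.\ Theorem \ref{thm-superapproximation} and Lemma \ref{bilinear-u-up}) rather than on $g_I$; threading the needle between the two objects is the actual content of the cited lemmas, and your sketch does not resolve that tension. Secondly, the remainder $(g_I,\partial_j v_h)-\langle g_I,\partial_j v_h\rangle_h$ can only be rescued by showing the top Legendre coefficients of $g_I$ are $\mathcal O(h^{k})$ small because $g$ is smooth; you gesture at one-dimensional quadrature error estimates but never re-inject this regularity, and without it the term is $\mathcal O(h)$ by Lemma \ref{a-ah-1norm}, which is fatal. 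Thirdly, your boundary accounting does not produce the claimed exponent: an $\mathcal O(h^{k+1})$-sized edge integrand paired with a trace of $v_h$ gives $\mathcal O(h^{k+1})\|v_h\|_1$ by Cauchy--Schwarz and the trace theorem, and only $\mathcal O(h^{k+1/2})$ under your ``trace costs $h^{-1/2}$'' version; reaching $k+\tfrac32$ requires first gaining a full extra power of $h$ from tangential orthogonality along each boundary edge and only then spending the trace/inverse inequality. So the skeleton is the right one, but each of the joints you flag as ``demanding care'' is a missing proof rather than a detail, and one of them (the orthogonality claim) is false as stated.
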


\begin{remark}
\label{rmk-loss}
There is half order loss in \eqref{bilinear-form-estimate}, only when using $v \in V^h$ for non-diagonal $\mathbf a$, i.e., when solving second order equations containing mixed
second order derivatives with homogeneous Neumann boundary conditions.  {\color{blue} See \cite{Li2021} for detailed proof of \eqref{xz-bilinear-quad-estimate} for the  homogeneous Neumann boundary condition case, i.e., $(\mathbf a\nabla u)\cdot\mathbf n =  0$ along the domain boundary. }
 \end{remark}

We have the Gronwall's inequality in integral form as follows:
\begin{lemma}\label{gronwall-inequality}
Let $\xi(t)$ be continuous on $[0,T]$ and
\begin{equation*}
\xi(t)\leq C_1 \int_0^t \xi(s)ds + \alpha(t)
\end{equation*}
for constant $C_1\geq 0$ and $\alpha(t)\geq 0$ nondescreasing in $t$. Then $ \xi(t) \leq \alpha(t) e^{C_1 t}$  thus $ \xi(t) \leq \alpha(t) e^{C_1 T}=C\alpha(t)$ for all $0\leq t \leq T$.
\end{lemma}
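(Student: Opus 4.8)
The plan is to reduce the integral inequality to a linear differential inequality by introducing the auxiliary function $\eta(t) := \int_0^t \xi(s)\,ds$. Since $\xi$ is continuous, $\eta$ is $C^1$ on $[0,T]$ with $\eta(0)=0$ and $\eta'(t)=\xi(t)$, so the hypothesis becomes $\eta'(t) \leq C_1 \eta(t) + \alpha(t)$, i.e. $\eta'(t)-C_1\eta(t)\leq\alpha(t)$. The standard device is then to multiply by the integrating factor $e^{-C_1 t}$ so that the left side becomes an exact derivative, $\frac{d}{dt}\bigl(e^{-C_1 t}\eta(t)\bigr) = e^{-C_1 t}\bigl(\eta'(t)-C_1\eta(t)\bigr) \leq e^{-C_1 t}\alpha(t)$.

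Integrating this from $0$ to $t$ and using $\eta(0)=0$ gives $e^{-C_1 t}\eta(t) \leq \int_0^t e^{-C_1 s}\alpha(s)\,ds$. This is exactly the point where the monotonicity of $\alpha$ is used: since $\alpha$ is nondecreasing, $\alpha(s)\leq \alpha(t)$ for every $s\in[0,t]$, whence $\int_0^t e^{-C_1 s}\alpha(s)\,ds \leq \alpha(t)\int_0^t e^{-C_1 s}\,ds = \alpha(t)\,\frac{1-e^{-C_1 t}}{C_1}$. Multiplying back by $e^{C_1 t}$ yields $\eta(t)\leq \alpha(t)\,\frac{e^{C_1 t}-1}{C_1}$, and substituting into the original inequality $\xi(t)\leq C_1\eta(t)+\alpha(t)$ collapses the algebra to $\xi(t)\leq \alpha(t)\bigl(e^{C_1 t}-1\bigr)+\alpha(t)=\alpha(t)e^{C_1 t}$, which is the claim. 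The stated consequence $\xi(t)\leq \alpha(t)e^{C_1 T}$ then follows from $t\leq T$ and monotonicity of the exponential, with $C:=e^{C_1 T}$.

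There is no genuine obstacle here, as this is the classical integral form of Gronwall's inequality; the only points meriting care are the degenerate case $C_1=0$, where $\frac{e^{C_1 t}-1}{C_1}$ is read as its limit $t$ and the conclusion reduces to the trivial $\xi(t)\leq\alpha(t)$, and the explicit appeal to the monotonicity of $\alpha$, without which one would only obtain the weaker $\xi(t)\leq\alpha(t)+C_1\int_0^t e^{C_1(t-s)}\alpha(s)\,ds$. An alternative and equally elementary route is Picard-style iteration: substituting the inequality into itself $n$ times and invoking $\alpha(s)\leq\alpha(t)$ at each stage produces the partial sums $\alpha(t)\sum_{j=0}^{n}\frac{(C_1 t)^j}{j!}$ together with a remainder bounded by $\frac{(C_1 t)^{n+1}}{(n+1)!}\max_{[0,t]}\xi$, which vanishes as $n\to\infty$ and again gives the exponential bound. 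I would present the integrating-factor argument as the main proof, since it is the shortest and avoids any convergence bookkeeping.
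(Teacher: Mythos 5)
Your integrating-factor argument is correct and complete, including the careful handling of the degenerate case $C_1=0$ and the explicit use of the monotonicity of $\alpha$ (a nondecreasing function is Riemann integrable, so the integral $\int_0^t e^{-C_1 s}\alpha(s)\,ds$ is well defined). The paper itself states this lemma as a classical tool without any proof, so your self-contained derivation is exactly the standard argument one would supply, and nothing further is needed.
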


\section{Error estimates for the elliptic projection}\label{elliptic-proj-error-estimate}
Let  $u_h(\mathbf x, t)$ denote the solution of the semi-discrete numerical scheme. 
Let $e(\mathbf x, t) = u_h(\mathbf x, t) - u_p(\mathbf x, t)$, then we can write 
$$e=\theta_h + \rho_h,$$ 
where $\theta_h := u_h-R_h u\in V^h_0$ and $\rho_h := R_hu - u_p \in V^h_0$. 

In this section, we will establish the superconvergence result for the elliptic projection, which is an important step for proving the superconvergence of function values. 
We have the following superconvergence result for $\|\rho_h^{(m)}(t)\|$, $m\geq 0$, $t\in[0,T]$. 
\begin{lemma}\label{rho-m-estimate}
If $a_{ij}$, $b_j$, $c\in C^m\left([0,T];W^{k+2,\infty}(\Omega)\right)$, $u\in C^{m}\left([0,T];H^{k+4}(\Omega)\right)$, then we have
\begin{align}
\|\rho_h^{(m)}( t)\|_{1} \leq & C h^{k+1}\sum_{j=0}^m(\|u^{(j)}( t)\|_{k+3}+\|(Lu)^{(j)}( t)\|_{k+2}),\label{rho-1-estimate}\\
\|\rho_h^{(m)}\|_{L^2([0,T];L^2(\Omega))} \leq & C h^{k+2}\sum_{j=0}^m(\|u^{(j)}\|_{L^2([0,T];H^{k+3}(\Omega))}+\|(Lu)^{(j)}\|_{L^2([0,T];H^{k+2}(\Omega))}),\label{rho-m-l2-estimate}
\end{align}
\begin{equation}\label{rho-m-infty-estimate}
    \resizebox{\textwidth}{!}
     {%
        $\|\rho_h^{(m)} \|_{L^\infty([0,T];L^2(\Omega))} \leq C h^{k+2}\sum_{j=0}^m(\|u^{(j)}\|_{L^{\infty}([0,T];H^{k+3}(\Omega))}+\|(Lu)^{(j)}\|_{L^{\infty}([0,T];H^{k+2}(\Omega))})$%
     }
\end{equation}
where $C$ is independent of $h$, $u$, $f$, and time $t$.
\end{lemma}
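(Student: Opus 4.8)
The plan is to follow the standard elliptic-projection route: derive an error equation for $\rho_h^{(m)}$ that separates a superconvergent quadrature part from a recursion into the lower-order time derivatives $\rho_h^{(j)}$, $j<m$; prove the $H^1$ bound \eqref{rho-1-estimate} by energy testing; and then upgrade to the $\mathcal O(h^{k+2})$ bounds \eqref{rho-m-l2-estimate}--\eqref{rho-m-infty-estimate} by Aubin--Nitsche duality. Both of the last two steps are inductions on $m$. First I differentiate the defining relation \eqref{elliptic-proj} $m$ times in $t$ against a fixed $v_h\in V^h_0$, apply the Leibniz rules of Section \ref{sec-preliminaries}, subtract $A_h(u_p^{(m)},v_h)$, substitute $(R_hu)^{(j)}=\rho_h^{(j)}+u_p^{(j)}$ with $u_p^{(j)}=(u^{(j)})_p$, and cancel the consistent part through the identity \eqref{notation-formula-1}. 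This yields
\[
A_h(\rho_h^{(m)},v_h)=\mathcal E_m(v_h)-\sum_{j=0}^{m-1}\binom{m}{j}A_h^{(m-j)}(\rho_h^{(j)},v_h),\qquad v_h\in V^h_0,
\]
where the data functional contains only quadrature errors,
\[
\mathcal E_m(v_h)=\Big[\langle (Lu)^{(m)},v_h\rangle_h-((Lu)^{(m)},v_h)\Big]+\sum_{j=0}^m\binom{m}{j}\Big[A^{(m-j)}(u^{(j)},v_h)-A_h^{(m-j)}(u_p^{(j)},v_h)\Big].
\]
Writing each summand as $A^{(m-j)}(u^{(j)},v_h)-A_h^{(m-j)}(u^{(j)},v_h)$ plus $A_h^{(m-j)}((u-u_p)^{(j)},v_h)$ and invoking Lemma \ref{rhs-estimate} on $(Lu)^{(m)}$, Lemma \ref{bilinear-quadrature-error} on each $A^{(m-j)}$, and the bound \eqref{bilinear-form-estimate}, I obtain, for $v_h\in V^h_0$,
\[
|\mathcal E_m(v_h)|\le C h^{k+2}\Lambda_m\|v_h\|_2,\qquad \Lambda_m:=\sum_{j=0}^m\left(\|u^{(j)}\|_{k+3}+\|(Lu)^{(j)}\|_{k+2}\right).
\]

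For \eqref{rho-1-estimate} I argue by induction on $m$. Taking $v_h=\rho_h^{(m)}\in V^h_0$ in the error equation and using the coercivity \eqref{vh-ellipticity} gives $C\|\rho_h^{(m)}\|_1^2\le A_h(\rho_h^{(m)},\rho_h^{(m)})$. The data term contributes $Ch^{k+2}\Lambda_m\|\rho_h^{(m)}\|_2$, which the inverse inequality \eqref{inverseestimate} turns into $Ch^{k+1}\Lambda_m\|\rho_h^{(m)}\|_1$; the recursion terms are controlled by the boundedness of the forms $A_h^{(m-j)}$ together with the inductive hypothesis $\|\rho_h^{(j)}\|_1\le Ch^{k+1}\Lambda_j$ $(j<m)$, giving $Ch^{k+1}\Lambda_{m-1}\|\rho_h^{(m)}\|_1$. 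Dividing by $\|\rho_h^{(m)}\|_1$ closes the induction and establishes \eqref{rho-1-estimate}.

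For the $\mathcal O(h^{k+2})$ estimates I run a duality argument pointwise in $t$, inducting on $m$ with working hypothesis $\|\rho_h^{(j)}(t)\|_0\le Ch^{k+2}\Lambda_j(t)$ for $j<m$. Let $w\in H^1_0(\Omega)$ solve the dual problem $A^{*}(w,v)=(\rho_h^{(m)},v)$ for all $v\in H^1_0(\Omega)$, so that the assumed elliptic regularity gives $\|w\|_2\le C\|\rho_h^{(m)}\|_0$, and let $w_p\in V^h_0$ be its M-type projection. Then $\|\rho_h^{(m)}\|_0^2=A(\rho_h^{(m)},w)$, which I split as $A(\rho_h^{(m)},w-w_p)+(A-A_h)(\rho_h^{(m)},w_p)+A_h(\rho_h^{(m)},w_p)$. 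The first term is $\mathcal O(h^{k+2})\|w\|_2$ because $\|\rho_h^{(m)}\|_1=\mathcal O(h^{k+1})$ by \eqref{rho-1-estimate} and $\|w-w_p\|_1=\mathcal O(h)\|w\|_2$; the second is $\mathcal O(h^{k+2})\|w\|_2$ by Lemma \ref{a-ah-1norm} and $\|w_p\|_2\le C\|w\|_2$; and the third I expand through the error equation, whose data part $\mathcal E_m(w_p)$ is $\mathcal O(h^{k+2})\Lambda_m\|w\|_2$ by the bound above.

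The main obstacle is the recursion terms $A_h^{(m-j)}(\rho_h^{(j)},w_p)$ produced in this last expansion: estimating them directly through $\|\rho_h^{(j)}\|_1=\mathcal O(h^{k+1})$ forfeits exactly the order we are trying to gain. The remedy is to transfer the spatial derivative off $\rho_h^{(j)}$ onto the smooth dual solution. I first replace $A_h^{(m-j)}$ by $A^{(m-j)}$ (error $\mathcal O(h)\|w_p\|_2\|\rho_h^{(j)}\|_1=\mathcal O(h^{k+2})$ by Lemma \ref{a-ah-1norm}) and $w_p$ by $w$ (error $\le C\|\rho_h^{(j)}\|_1\|w-w_p\|_1=\mathcal O(h^{k+2})$), and then integrate by parts in $A^{(m-j)}(\rho_h^{(j)},w)$; since $\rho_h^{(j)}\in V^h_0$ the boundary terms vanish and the result equals $(\rho_h^{(j)},\widetilde L^{(m-j)}w)$ for a second-order operator $\widetilde L^{(m-j)}$ acting on $w$ with $\|\widetilde L^{(m-j)}w\|_0\le C\|w\|_2$. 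The inductive $L^2$ bound then gives $\mathcal O(h^{k+2})\|w\|_2$. Collecting all pieces, $\|\rho_h^{(m)}\|_0^2\le Ch^{k+2}\Lambda_m\|w\|_2\le Ch^{k+2}\Lambda_m\|\rho_h^{(m)}\|_0$, hence the pointwise-in-time bound $\|\rho_h^{(m)}(t)\|_0\le Ch^{k+2}\Lambda_m(t)$. Squaring and integrating over $[0,T]$ yields \eqref{rho-m-l2-estimate}, and taking the supremum over $[0,T]$ yields \eqref{rho-m-infty-estimate}. The two points to check carefully are that the M-type projection maps $H^1_0(\Omega)$ into $V^h_0$, so that the error equation may legitimately be tested against $w_p$, and that Lemmas \ref{a-ah-1norm} and \ref{bilinear-quadrature-error} apply verbatim to the time-differentiated forms $A^{(m-j)}$, which holds since these share the same structure with the smooth coefficients $\mathbf a^{(m-j)},\mathbf b^{(m-j)},c^{(m-j)}$.
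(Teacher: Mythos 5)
Your derivation of the error equation, your $\mathcal O(h^{k+2})$ bound on the data functional $\mathcal E_m$ (via Lemma \ref{rhs-estimate}, Lemma \ref{bilinear-quadrature-error}, and \eqref{bilinear-form-estimate}), and your proof of the $H^1$ estimate \eqref{rho-1-estimate} (coercivity \eqref{vh-ellipticity}, inverse inequality, induction on $m$) coincide with the paper's proof step for step. The genuine difference is the duality step. You pose the \emph{continuous} dual problem, find $w\in H^1_0(\Omega)$ with $A^*(w,v)=(\rho_h^{(m)},v)$ for all $v\in H^1_0(\Omega)$, use the exact elliptic regularity assumed in Section \ref{sec-preliminaries}, and test the error equation with the M-type projection $w_p$. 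The paper instead poses a \emph{discrete} dual problem, find $\phi_h\in V^h_0$ with $A^*(\phi_h,v_h)=(\rho_h^{(i)},v_h)$ for all $v_h\in V^h_0$, imports the discrete regularity $\|\phi_h\|_2\le C\|\rho_h^{(i)}\|_0$ from Theorem 5.3 of \cite{li2020superconvergence}, and tests the error equation with $\chi=\Pi_1\phi_h$, the $L^2$ projection onto the continuous piecewise $Q^1$ subspace. Your treatment of the recursion terms, namely replacing $A_h^{(m-j)}$ by $A^{(m-j)}$ via Lemma \ref{a-ah-1norm}, replacing $w_p$ by $w$, and then integrating by parts onto the dual solution so that the inductive $L^2$ bound applies, is the same device as the paper's identity $A^{(i-j)}(\rho_h^{(j)},\phi_h)=(\rho_h^{(j)},(L^*)^{(i-j)}\phi_h)$; if anything yours is cleaner, since integration by parts against $w\in H^2(\Omega)\cap H^1_0(\Omega)$ is globally legitimate, whereas the paper's $\phi_h$ is only piecewise smooth and that identity must be read elementwise. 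What the paper's route buys is independence from any projection estimate at low regularity; what your route buys is reliance only on the elliptic regularity already assumed in the paper, rather than on a discrete regularity theorem from earlier work.

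There is, however, one real gap, exactly at the point you flag but do not resolve: the dual solution $w$ has only $H^2$ regularity, yet you apply the M-type projection to it and use $w_p\in V^h_0$, $\|w-w_p\|_1\le Ch\|w\|_2$, and $\|w_p\|_2\le C\|w\|_2$. Every M-type projection estimate stated in the paper or quotable from \cite{li2019superconvergence,li2020superconvergence} (e.g., Theorem \ref{thm-superapproximation}, Lemma \ref{bilinear-u-up}) requires at least $H^{k+2}$ or $H^{k+3}$ regularity, so none of these three low-regularity properties is covered by a citable result; they would need a separate Bramble--Hilbert type verification that the M-type projection is bounded on $H^2$ of the reference cell, reproduces polynomials, and preserves zero boundary traces. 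The repair is simple: replace $w_p$ by an interpolant for which these properties are documented, either the operator $\Pi_1$ of \cite{li2020superconvergence} that the paper itself uses, or a Scott--Zhang type quasi-interpolant preserving homogeneous Dirichlet data. With that single substitution (which changes none of your order counting, since you only used $\mathcal O(h)$ approximation in $H^1$ and $H^2$ stability), your argument closes and yields \eqref{rho-m-l2-estimate} and \eqref{rho-m-infty-estimate} exactly as you describe.
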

\begin{proof}
First we prove \eqref{rho-1-estimate}, with which  we then  prove \eqref{rho-m-l2-estimate} and \eqref{rho-m-infty-estimate} by the dual argument.

From the definition of  the  discrete elliptic projection \eqref{elliptic-proj} we have
\begin{align}\label{rho-bilinearform-ep}
A_h(\rho_h, v_h) = \epsilon(v_h), \quad \forall v_h \in V^h_0.
\end{align}
where 
\[\epsilon(v_h) = \langle  - Lu, v_h \rangle_h - A_h(u_p,v_h).\]
Note that $v_h$ is time independent. 
Taking $m$ time derivatives of \eqref{rho-bilinearform-ep} yields 
\begin{equation}\label{dt-A-rho}
\left(A_h(\rho_h, v_h)\right)^{(m)} =\sum_{j=0}^m {m\choose j} A_h^{(m-j)}( \rho^{(j)}_h,v_h)= \epsilon^{(m)}(v_h).
\end{equation}
The term $\epsilon^{(m)}(v_h)$ can be rewritten as follows:
\begin{equation*}
\begin{aligned}
& \epsilon^{(m)}(v_h) = \langle (Lu)^{(m)} ,v_h \rangle_h - (A_{h}(u_p,v_h))^{(m)}  \\
= & \left[((Lu)^{(m)} ,v_h ) - (A(u,v_h))^{(m)}\right] -\left[ ((Lu)^{(m)} ,v_h )- \langle (Lu)^{(m)} ,v_h \rangle_h\right] \\ 
&+ \left[(A(u,v_h))^{(m)}-(A_h(u,v_h))^{(m)}\right] + \left(A_{h}(u-u_p,v_h)\right)^{(m)}. 
\end{aligned}
\end{equation*}
By Leibniz rule and \eqref{notation-formula-1}, we have 
\[ ((Lu)^{(m)} ,v_h ) - (A(u,v_h))^{(m)}=\sum_{j=0}^m {m\choose j} \left[(L^{(m-j)} u^{(j)}, v_h)-A^{(m-j)}( u^{(j)},v_h)\right] =0.\]
By Lemma \ref{rhs-estimate}, 
\[((Lu)^{(m)} ,v_h )- \langle (Lu)^{(m)} ,v_h \rangle_h=\mathcal O(h^{k+2})\|(Lu)^{(m)}( t)\|_{k+2} \|v_h\|_2.\]
By Leibniz rule and  Lemma \ref{bilinear-quadrature-error},
\begin{align*} (A(u,v_h))^{(m)}-(A_h(u,v_h))^{(m)}&= \sum_{j=0}^m {m\choose j} \left[A^{(m-j)}(u^{(j)},v_h)-A_h^{(m-j)}(u^{(j)},v_h)\right]\\
&=\mathcal O(h^{k+2}) \sum_{j=0}^m {m\choose j} \|u^{(j)}( t)\|_{k+3}\|v_h\|_2. \end{align*}
 Now,  Lemma \ref{bilinear-u-up} implies
\begin{align*}
\left(A_{h}(u-u_p,v_h)\right)^{(m)}= & \sum_{j=0}^m {m\choose j} A_h^{(m-j)}\left( (u-u_p)^{(j)},v_h\right)\\
= & \mathcal O(h^{k+2}) \sum_{j=0}^m {m\choose j} \|u^{(j)}( t)\|_{k+3}\|v_h\|_2. 
\end{align*}
Thus we have
\begin{equation}
\label{epsilon-m-estimate}
\epsilon^{(m)}(v_h) =  \mathcal O(h^{k+2})\left(\sum_{j=0}^m \|u^{(j)}( t)\|_{k+3}+\|(Lu)^{(m)}( t)\|_{k+2}\right)\|v_h\|_2.
\end{equation}

For $i\geq 0$, by the $V_h$-ellipticity \eqref{vh-ellipticity}, \eqref{dt-A-rho}, and \eqref{epsilon-m-estimate} we have 
\[\begin{aligned}
& C\|\rho_h^{(i)}( t)\|_1^2 \leq A_h( \rho^{(i)}_h, \rho^{(i)}_h)\\
= & \sum_{j=0}^i {i\choose j} A_h^{(i-j)}( \rho^{(j)}_h, \rho^{(i)}_h) - \sum_{j=0}^{i-1} {i\choose j} A_h^{(i-j)}( \rho^{(j)}_h, \rho^{(i)}_h)\\
= & \epsilon^{(i)}(\rho^{(i)}_h) - \sum_{j=0}^{i-1} {i\choose j} A_h^{(i-j)}( \rho^{(j)}_h, \rho^{(i)}_h) \\
\leq & \mathcal O(h^{k+1})\left( \sum_{j=0}^i \|u^{(j)}\|_{k+3}+\|(Lu)^{(i)}\|_{k+2} \right)h\|\rho^{(i)}_h\|_2 +C\sum_{j=0}^{i-1}\|\rho^{(j)}_h( t)\|_{1}\|\rho^{(i)}_h( t)\|_{1}\\
\leq & \left[ \mathcal O(h^{k+1})\left(\sum_{j=0}^i\|u^{(j)}\|_{k+3}+\|(Lu)^{(i)}\|_{k+2}\right)+C\sum_{j=0}^{i-1}\|\rho^{(j)}_h( t)\|_{1}\right]\|\rho^{(i)}_h( t)\|_1,
\end{aligned}\]
 the last inequality follows from an application of an inverse estimate. Thus
\begin{equation}
\label{rho-i-1-norm-estimation}
\|\rho_h^{(i)}( t)\|_1\leq \mathcal O(h^{k+1})\left(\sum_{j=0}^i\|u^{(j)}\|_{k+3}+\|(Lu)^{(i)}\|_{k+2}\right)+C\sum_{j=0}^{i-1}\|\rho^{(j)}_h( t)\|_{1}.
\end{equation}

Now \eqref{rho-1-estimate} can be proven by induction as follows. First, set $i=0$ in  \eqref{rho-i-1-norm-estimation}  to obtain  \eqref{rho-1-estimate} with $m=0$. 
Second, assume   \eqref{rho-i-1-norm-estimation}  holds for $m=i-1$, then  \eqref{rho-i-1-norm-estimation} implies  that \eqref{rho-1-estimate} also holds for $m=i$.
 
For fixed $t\in[0,T]$, to estimate $\rho_h^{(m)}$ in $L^2$-norm, we consider the dual problem: find $\phi_h\in V^h_0$ satisfying: for $i\geq 0$,
\begin{equation}\label{dual-problem}
A^*(\phi_h, v_h)=( \rho^{(i)}_h(t), v_h), \quad \forall v_h \in V^h_0.
\end{equation}
Based on Theorem 5.3 in \cite{li2020superconvergence}, by assuming the elliptic regularity and $V^h$ ellipticity, problem \eqref{dual-problem} has a unique solution satisfying
\begin{equation}\label{dual-elliptic-regularity}
\|\phi_h\|_2 \leq C \|\rho_h^{(i)}(t)\|_{0}.
\end{equation}
Take $v_h = \rho^{(i)}_h$ in \eqref{dual-problem} then we have 
\begin{align*}
& \|\rho^{(i)}_h( t)\|_0^2\\
= & A^*(\phi_h, \rho^{(i)}_h) = A(\rho^{(i)}_h, \phi_h)\\
= & \sum_{j=0}^i {i\choose j} A^{(i-j)}( \rho^{(j)}_h, \phi_h) - \sum_{j=0}^{i-1} {i\choose j} A^{(i-j)}( \rho^{(j)}_h, \phi_h) \\
= & \sum_{j=0}^i {i\choose j} \left(A_h^{(i-j)}(\rho^{(j)}_h, \phi_h) +E\left(A^{(i-j)}( \rho^{(j)}_h, \phi_h)\right)\right) - \sum_{j=0}^{i-1} {i\choose j} \left(\rho^{(j)}_h, (L^*)^{(i-j)}\phi_h \right).
\end{align*}

Note that $\forall \chi \in V^h_0$, with \eqref{dt-A-rho} and \eqref{epsilon-m-estimate},
\begin{equation}\label{rho-0-square-estimate}
\begin{aligned}
& \sum_{j=0}^i {i\choose j} A_h^{(i-j)}(\rho^{(j)}_h, \phi_h) \\
= & \sum_{j=0}^i {i\choose j} A_h^{(i-j)}(\rho^{(j)}_h, \phi_h-\chi) + \sum_{j=0}^i {i\choose j} A_h^{(i-j)}(\rho^{(j)}_h, \chi)\\
= & \sum_{j=0}^i {i\choose j} A_h^{(i-j)}(\rho^{(j)}_h, \phi_h-\chi) + \epsilon^{(i)}( \chi) \\
\leq & C \sum_{j=0}^i \|\rho^{(j)}_h( t)\|_1\| \phi_h-\chi\|_1 +\mathcal O(h^{k+2})\left(\sum_{j=0}^i\|u^{(j)}( t)\|_{k+3}+\|(Lu)^{(i)}( t)\|_{k+2}\right)\|\chi\|_2.
\end{aligned}
\end{equation}

Let $\chi = \Pi_1 \phi_h$ where $\Pi_1$ is the $L^2$ projection to  functions in the  continuous piecewise $Q^1$ polynomial space, see \cite{li2020superconvergence}. Then we have $\| \phi_h-\chi\|_1 \leq Ch \|\phi_h\|_2$ and $\|\chi\|_2 \leq C\|\phi_h\|_2$.  Inserting  \eqref{rho-1-estimate} and \eqref{dual-elliptic-regularity} into \eqref{rho-0-square-estimate}, we have 
\begin{equation}\label{1st-term-in-rho-0-estimate}
\sum_{j=0}^i {i\choose j} A_h^{(i-j)}(\rho^{(j)}_h, \phi_h) = \mathcal O(h^{k+2})\left(\sum_{j=0}^i(\|u^{(j)} t)\|_{k+3}+\|(Lu)^{(i)}( t)\|_{k+2}\right)\|\phi_h\|_2.
\end{equation}
Thus with \eqref{1st-term-in-rho-0-estimate}, Lemma \ref{bilinear-quadrature-error}, and inverse inequality we have
\begin{equation}\label{rho-i-0-norm-estimation}
\begin{aligned}
 &  \|\rho^{(i)}_h(t)\|_0^2  \\
\leq & \mathcal O(h^{k+2})\left(\sum_{j=0}^i\|u^{(j)}( t)\|_{k+3}+\|(Lu)^{(i)}( t)\|_{k+2}\right)\|\phi_h\|_2 \\
&+ \mathcal O(h^{k+2})\sum_{j=0}^i\|\rho_h^{(j)}( t)\|_{k+2}\|\phi_h\|_2
  + C\sum_{j=0}^{i-1}\|\rho_h^{(j)}( t)\|_0\|\phi_h\|_2\\
= & \left[\mathcal O(h^{k+2})\left(\sum_{j=0}^i\|u^{(j)}\|_{k+3}+\|(Lu)^{(i)}\|_{k+2} \right) + C\sum_{j=0}^{i-1}\|\rho_h^{(j)}(t)\|_0\right]\|\phi_h\|_2\\
\leq & \left(\mathcal O(h^{k+2})\left(\sum_{j=0}^i\|u^{(j)}\|_{k+3}+\|(Lu)^{(i)}\|_{k+2} \right) + C\sum_{j=0}^{i-1}\|\rho_h^{(j)}(t)\|_0\right)\|\rho^{(i)}_h(t)\|_0,
\end{aligned}
\end{equation}
where \eqref{dual-elliptic-regularity} is applied in the last inequality.

 With similar induction arguments as above,  \eqref{rho-i-0-norm-estimation} implies 
 \begin{equation}\label{intermediate-rho-0-estimate}
 \|\rho^{(i)}_h( t)\|_0 \leq \mathcal O(h^{k+2})\sum_{j=0}^i(\|u^{(j)}( t)\|_{k+3}+\|(Lu)^{(j)}( t)\|_{k+2}).
 \end{equation}
 
Take the square for both sides of \eqref{intermediate-rho-0-estimate} then integrate from $0$ to $T$ and take the square root for both sides, we can get \eqref{rho-m-l2-estimate}. 
Take the maximum of the right hand side then the left hand side of \eqref{intermediate-rho-0-estimate} for $t\in[0,T]$, we can get \eqref{rho-m-infty-estimate}. 
\end{proof}

\section{ Accuracy of the semi-discrete schemes}\label{error-estimate}

In this section, we will prove the $(k+2)$-th order of accuracy of $Q^k$ spectral element method, when the errors are measured only at nodes of degree of freedoms, which is a superconvergence result of function values. 

 Throughout this section the generic constant $C$ is independent of $h$. Although in principle it may depend on $t$ though the coefficients $a_{ij}(t)$, $b_{j}(t)$, $c(t)$, we also treat it as independent of time since its time dependent version can always be replaced by a time independent constant after taking maximum over the ime interval $[0,T]$.    
 In what follows we will state and prove the main theorems for wave, parabolic and the Schr\"{o}dinger equations. 

\subsection{The hyperbolic problem}\label{error-wave}
 The   main  result for the wave equation can be stated as the following theorem. 
\begin{theorem}\label{wave-u-uh}
If $a_{ij}$, $b_j$, $c\in C^2\left([0,T];W^{k+2,\infty}(\Omega)\right)$, $u\in C^{2}\left([0,T];H^{k+4}(\Omega)\right)$, then for the semi-discrete scheme \eqref{wave-scheme} for the problem \eqref{wave-equation-homo-bc},  we have
\begin{equation*}
    \resizebox{\textwidth}{!}
     {
$
 \begin{aligned}
\|u_h-u\|_{L^{2}([0,T];l^2(\Omega))} \leq & C h^{k+2}\left(\sum_{j=0}^2(\|u^{(j)}\|_{L^{2}([0,T];H^{k+3}(\Omega))}+\|(Lu)^{(j)}\|_{L^{2}([0,T];H^{k+2}(\Omega))})\right.\\
&\qquad\qquad+ \left. \sum_{j=0}^1(\|u^{(j)}( 0)\|_{k+3}+\|(Lu)^{(j)}( 0)\|_{k+2})\right),\\
\|u_h-u\|_{L^{\infty}([0,T];l^2(\Omega))} \leq & C h^{k+2} \sum_{j=0}^2(\|u^{(j)}\|_{L^{\infty}([0,T];H^{k+3}(\Omega))}+\|(Lu)^{(j)}\|_{L^{\infty}([0,T];H^{k+2}(\Omega))}), 
\end{aligned}$     }
\end{equation*}
where  $C$ is independent of $t$, $h$, $u$, and $f$.
\end{theorem}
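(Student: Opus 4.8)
The plan is to use the classical elliptic-projection splitting and reduce the whole estimate to an energy argument for a finite-dimensional quantity. I write $u_h-u=\theta_h+\rho_h+(u_p-u)$ with $\theta_h=u_h-R_hu\in V^h_0$ and $\rho_h=R_hu-u_p\in V^h_0$. The last two pieces are already controlled: $\|u_p-u\|_{l^2(\Omega)}=\mathcal O(h^{k+2})\|u\|_{k+2}$ by Theorem \ref{thm-superapproximation}, and the corresponding $L^2$- and $L^\infty$-in-time bounds for $\rho_h$ are precisely \eqref{rho-m-l2-estimate}--\eqref{rho-m-infty-estimate} of Lemma \ref{rho-m-estimate} (with the pointwise-in-time bound \eqref{intermediate-rho-0-estimate} available for the forcing), all converted from $L^2(\Omega)$ to $l^2(\Omega)$ by the norm equivalence \eqref{norm-equivalence}. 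Hence the entire task reduces to bounding $\theta_h$ in $L^\infty([0,T];L^2(\Omega))$, which then yields the weaker $L^2$-in-time statement, at order $h^{k+2}$.

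To derive an equation for $\theta_h$, I substitute $u_h=\theta_h+R_hu$ into \eqref{wave-scheme}, apply the elliptic projection relation \eqref{elliptic-proj}, and eliminate $f$ through the PDE $u^{(2)}=-Lu+f$. This gives, for all $v_h\in V^h_0$,
\begin{equation*}
\langle\theta_h^{(2)},v_h\rangle_h+A_h(\theta_h,v_h)=\langle(u-R_hu)^{(2)},v_h\rangle_h,
\end{equation*}
and writing $(R_hu)^{(2)}=\rho_h^{(2)}+(u^{(2)})_p$ turns the right-hand side into $\langle(u^{(2)}-(u^{(2)})_p)-\rho_h^{(2)},v_h\rangle_h$. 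The key simplification is that the Gauss--Lobatto quadrature only sees nodal values, so $\langle u^{(2)}-(u^{(2)})_p,v_h\rangle_h=\langle(u^{(2)})_I-(u^{(2)})_p,v_h\rangle_h$ because $(u^{(2)})_I$ interpolates $u^{(2)}$ on $Z_0$; the forcing is therefore tested against a genuine $V^h$ function whose $L^2$-norm is $\mathcal O(h^{k+2})\|u^{(2)}\|_{k+2}$ by Theorem \ref{thm-superapproximation}, while $\|\rho_h^{(2)}\|_0=\mathcal O(h^{k+2})$ by \eqref{intermediate-rho-0-estimate}.

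I then run the standard wave-energy argument by choosing $v_h=\theta_h^{(1)}$ and setting $E(t):=\langle\theta_h^{(1)},\theta_h^{(1)}\rangle_h+A_h(\theta_h,\theta_h)$. Here $\langle\theta_h^{(2)},\theta_h^{(1)}\rangle_h=\tfrac12\tfrac{d}{dt}\langle\theta_h^{(1)},\theta_h^{(1)}\rangle_h$, and the principal symmetric part of $A_h(\theta_h,\theta_h^{(1)})$ equals $\tfrac12\tfrac{d}{dt}A_h(\theta_h,\theta_h)$ up to terms containing $A_h^{(1)}$ together with the first- and zeroth-order pieces; crucially the convection term pairs $\nabla\theta_h$ with $\theta_h^{(1)}$ (no derivative falls on the test function), so every remainder is bounded by $C(\|\theta_h\|_1^2+\langle\theta_h^{(1)},\theta_h^{(1)}\rangle_h)$ and hence by $CE(t)$ via \eqref{vh-ellipticity}. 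Estimating the forcing by Cauchy--Schwarz and Young's inequality as $\mathcal O(h^{2k+4})+\tfrac12\langle\theta_h^{(1)},\theta_h^{(1)}\rangle_h$ and absorbing the last term, integrating in time, and applying Gronwall's inequality (Lemma \ref{gronwall-inequality}) yields $E(t)\le C(E(0)+h^{2k+4}(\cdots))$. The initial energy is $\mathcal O(h^{2k+4})$: indeed $\theta_h(0)=R_hu_0-R_hu_0=0$, while $\theta_h^{(1)}(0)=(u_1)_I-(u_1)_p-\rho_h^{(1)}(0)$ is controlled by $\|(u_1)_I-(u_1)_p\|_0+\|\rho_h^{(1)}(0)\|_0=\mathcal O(h^{k+2})$ using the same interpolation identity and \eqref{intermediate-rho-0-estimate}. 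Consequently $\|\theta_h(t)\|_0\le C\|\theta_h(t)\|_1=\mathcal O(h^{k+2})$ uniformly in $t$, and combining with the $\rho_h$ and $u_p-u$ bounds and passing to $l^2(\Omega)$ through \eqref{norm-equivalence} gives both stated estimates, the $L^2$-in-time forcing integral producing the $L^2([0,T];H^{k+3})$ norms and the supremum producing the $L^\infty([0,T];H^{k+3})$ norms.

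The step I expect to be the main obstacle is the bookkeeping of the energy identity forced by the time-dependent, nonsymmetric bilinear form $A_h$: one must carefully peel off $\tfrac12\tfrac{d}{dt}A_h(\theta_h,\theta_h)$, verify that the residual $A_h^{(1)}$-type terms and the convection and reaction contributions are all dominated by $E(t)$ with an $h$-independent constant, and confirm through \eqref{vh-ellipticity} that $A_h(\theta_h,\theta_h)$ controls $\|\theta_h\|_1^2$ uniformly on $[0,T]$, so that the Gronwall constant stays finite. A secondary but necessary care is tracking exactly which temporal norms of $u$, $Lu$ and the datum $u_1$ enter, so that the right-hand sides match the $L^2$- and $L^\infty$-in-time forms in the theorem.
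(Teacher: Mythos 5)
Your overall architecture coincides with the paper's own proof: the same splitting $u_h-u=\theta_h+\rho_h+(u_p-u)$, the same reduction of $\rho_h$ and $u_p-u$ via Lemma \ref{rho-m-estimate} and Theorem \ref{thm-superapproximation}, the same error equation \eqref{theta-eqn-1}, the test function $v_h=\theta_h^{(1)}$, Gronwall's inequality, and the same treatment of the initial data $\theta_h(0)=0$ and $\theta_h^{(1)}(0)=(u_1)_I-(R_hu)^{(1)}(0)$. The genuine gap sits exactly at the step you dismiss as bookkeeping. Your claim that, after peeling off $\tfrac12\tfrac{d}{dt}A_h(\theta_h,\theta_h)$, ``no derivative falls on the test function'' and hence every remainder is bounded by $CE(t)$, is false: since $A_h$ is nonsymmetric through the convection term, differentiating the full energy gives (this is \eqref{dt-bilinear-form})
\[
\frac{d}{dt}A_h(\theta_h,\theta_h) = A^{(1)}_{h}(\theta_h, \theta_h)+2A_h(\theta_h,\theta_h^{(1)})- \langle  \mathbf b\cdot\nabla\theta_h ,\theta_h^{(1)}\rangle_h + \langle  \mathbf b\cdot\nabla\theta_h^{(1)} ,\theta_h\rangle_h,
\]
so the remainder necessarily contains $\langle\mathbf b\cdot\nabla\theta_h^{(1)},\theta_h\rangle_h$, in which the spatial derivative \emph{does} fall on $\theta_h^{(1)}$. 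This term is not controlled by $\|\theta_h\|_1^2+\langle\theta_h^{(1)},\theta_h^{(1)}\rangle_h$: Cauchy--Schwarz yields $\|\nabla\theta_h^{(1)}\|_{l^2}\|\theta_h\|_{l^2}$, and $\|\nabla\theta_h^{(1)}\|$ is not part of the energy, while a bare inverse estimate would cost a factor $h^{-1}$ and ruin the Gronwall argument. Nor can you integrate by parts directly to move the derivative onto $\theta_h$, because $\langle\cdot,\cdot\rangle_h$ is a quadrature sum, not an integral.

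The paper's resolution of precisely this point is \eqref{extra-convection-estimate}: first replace $\langle \mathbf b\cdot\nabla\theta_h^{(1)},\theta_h\rangle_h$ by the exact integral $(\mathbf b\cdot\nabla\theta_h^{(1)},\theta_h)$ at the price of a quadrature error $\mathcal O(h^2)|\mathbf b\theta_h|_2\|\nabla\theta_h^{(1)}\|_0$ (Lemma \ref{lemma-quaderror-2norm}), absorb that error using the inverse inequality \eqref{inverseestimate} (the factor $h^2$ exactly pays for the two lost derivatives), and only then integrate by parts in the exact inner product, using $\theta_h\in V_0^h$, to arrive at the bound $C\|\theta_h^{(1)}\|_0\|\theta_h\|_1$. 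This three-step maneuver is the one genuinely non-routine ingredient of the wave-equation proof, and it is what your proposal is missing. Alternatively, your argument could be repaired by keeping only the symmetric part $\langle\mathbf a\nabla\theta_h,\nabla\theta_h\rangle_h+\langle c\,\theta_h,\theta_h\rangle_h$ inside the time derivative and treating $\langle\mathbf b\cdot\nabla\theta_h,\theta_h^{(1)}\rangle_h$ as a perturbation --- then indeed no derivative ever lands on $\theta_h^{(1)}$ --- but in that case the coercivity you need is that of the symmetric part alone, not \eqref{vh-ellipticity} for the full $A_h$ as you invoke, and it would have to be justified separately from the paper's standing assumptions.
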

\begin{proof}
Note  that  for  the  numerical solution $u_h$ we have
\begin{equation}
\langle  u_h^{(2)}, v_h \rangle_h + A_h(u_h, v_h) = \langle f, v_h \rangle_h,\quad  \forall v_h \in V^h_0.
\end{equation} 
The exact solution $u$ satisfies $u_{tt}=-Lu+f$ thus the elliptic projection \eqref{elliptic-proj} satisfies
\[A_h(R_h u,v_h) = \langle u^{(2)}-f,v_h \rangle_h,\quad \forall v_h\in V^h_0.\]
Subtracting the two equations above, we get $ \theta_h=u_h-R_hu$, which satisfies 
\begin{equation}\label{theta-eqn-1}
\langle \theta_h^{(2)}, v_h \rangle_h + A_h(\theta_h, v_h) = -\langle \rho_h^{(2)},  v_h\rangle_h + 
\langle u^{(2)}-u^{(2)}_p,v_h \rangle,\quad \forall v_h \in V^h_0.
\end{equation}
 Note  that  
\begin{align}\label{dt-bilinear-form}
\frac{d}{dt}A_h(\theta_h,\theta_h) = A^{(1)}_{h}(\theta_h, \theta_h)+2A_h(\theta_h,\theta_h^{(1)})- \langle  \mathbf b\cdot\nabla\theta_h ,\theta_h^{(1)}\rangle_h + \langle  \mathbf b\cdot\nabla\theta_h^{(1)} ,\theta_h\rangle_h.
\end{align}
Thus by Lemma \ref{lemma-quaderror-2norm} and \eqref{norm-equivalence}, we have 
\begin{equation}\label{extra-convection-estimate}
\begin{aligned}
\langle  \mathbf b\cdot\nabla\theta_h^{(1)} ,\theta_h\rangle_h
= & (\mathbf b \cdot \nabla \theta_h^{(1)}, \theta_h) + \mathcal O(h^2)|\mathbf b \theta_h|_2\|\nabla  \theta_h^{(1)}\|_0 \\
\leq & (\mathbf b \cdot \nabla \theta_h^{(1)}, \theta_h) + C\| \theta_h^{(1)}\|_0\|\theta_h\|_1\\
= & (\nabla \cdot(\mathbf b \theta_h),\theta_h^{(1)}) + C\| \theta_h^{(1)}\|_0\|\theta_h\|_1\\
\leq & C \| \theta_h^{(1)}\|_0\|\theta_h\|_1\leq  C \| \theta_h^{(1)}\|_{l^2}\|\theta_h\|_1,
\end{aligned}
\end{equation}
where  an  inverse inequality  was applied to the first inequality and integration by parts in  $\theta_h\in V_0^h$  yields   the last equation. 

 Next we estimate  $\|\theta_h^{(1)}(s)\|^2_{0} +\|\theta_h(s)\|^2_{1}$. 
Take $v_h = \theta_h^{(1)} $ in \eqref{theta-eqn-1}  and  integrate with respect to $t$ from $0$ to $s$. With \eqref{dt-bilinear-form}, we have 
\begin{equation}\label{theta-estimate}
\resizebox{\textwidth}{!}{$
\begin{aligned}
& \int_0^s\frac{d}{dt}\left(\frac{1}{2}\langle \theta_h^{(1)}, \theta_h^{(1)} \rangle_h + \frac12 A_h(\theta_h,\theta_h)\right)dt\\
 = & \frac12 \int_0^s A^{(1)}_h(\theta_h,\theta_h)-\langle  \mathbf b\cdot\nabla\theta_h ,\theta_h^{(1)}\rangle_h +\langle  \mathbf b\cdot\nabla\theta_h^{(1)} ,\theta_h\rangle_h - 2\langle \rho_h^{(2)}, \theta_h^{(1)}\rangle_h + 2\langle u^{(2)}-u^{(2)}_p, \theta_h^{(1)} \rangle_h dt.
\end{aligned}$}
\end{equation}
 With $\theta_h(0) = 0$ and
\eqref{extra-convection-estimate}, this  implies
\begin{equation}\label{theta-estimate-0} 
\begin{aligned}
 & \frac12(\|\theta_h^{(1)}(s)\|^2_{l^2}+A_h(\theta_h(s),\theta_h(s))) - \frac12\|\theta_h^{(1)}(0)\|^2_{l^2} \\
\leq & C\int_0^s  (\|\theta_h\|^2_{1} +  \| \theta_h^{(1)}\|_0\|\theta_h\|_1)dt +C\int_0^s \|\rho_h^{(2)}\|_{0} \|\theta_h^{(1)}\|_{0} dt\\
& + C\int_0^s  \| u^{(2)}-u^{(2)}_p\|_{l^2}\|\theta_h^{(1)}\|_{0}dt\\
\leq & C\int_0^s( \| \theta_h^{(1)}\|^2_0 + \|\theta_h\|^2_{1}) dt +C\int_0^s (\|\rho_h^{(2)}\|^2_{0} + \| u^{(2)}-u^{(2)}_p\|^2_{l^2})  dt,
\end{aligned}
\end{equation}
where Cauchy-Schwarz inequality  was  used in the last inequality.

Thus with \eqref{norm-equivalence}, \eqref{vh-ellipticity}, and \eqref{theta-estimate-0} we have
\begin{equation}\label{theta-estimate-1} 
\begin{aligned}
& \|\theta_h^{(1)}(s)\|^2_{0} +\|\theta_h(s)\|^2_{1} \leq C\|\theta_h^{(1)}(s)\|^2_{l^2}  + C A_h(\theta_h(s),\theta_h(s))\\
\leq & C\|\theta_h^{(1)}(0)\|^2_{l^2} + C\int_0^s( \| \theta_h^{(1)}\|^2_0 + \|\theta_h\|^2_{1}) dt +C\int_0^s (\|\rho_h^{(2)}\|^2_{0} + \| u^{(2)}-u^{(2)}_p\|^2_{l^2})  dt.
\end{aligned}
\end{equation}
With the  Gronwall  inequality \eqref{gronwall-inequality} we can eliminate the second term to find 
\begin{align*}
\|\theta_h^{(1)}(s)\|^2_{0} +\|\theta_h(s)\|^2_{1} \leq C\|\theta_h^{(1)}(0)\|^2_{l^2}  +C\int_0^s \|\rho_h^{(2)}\|^2_{0} + \| u^{(2)}-u^{(2)}_p\|^2_{l^2}  dt.
\end{align*}
With \eqref{rho-m-infty-estimate} and Theorem \ref{thm-superapproximation} we have
\begin{equation*}\label{u-up-estimate-in-energy}
\resizebox{\textwidth}{!}
     {$
\begin{aligned}
\|\theta_h^{(1)}(s)\|^2_{0} +\|\theta_h(s)\|^2_{1} \leq C\|\theta_h^{(1)}(0)\|^2_{l^2}  +\mathcal O(h^{2k+4})\int_0^s \sum_{j=0}^2(\|u^{(j)}\|_{k+3}+\|(Lu)^{(j)}\|_{k+2})^2 dt,
\end{aligned}$}
\end{equation*}
i.e.
\begin{equation}\label{theta-estimate-3} 
\begin{aligned}
 \|\theta_h^{(1)}(s)\|_{0} +\|\theta_h(s)\|_{1}
\leq C\|\theta_h^{(1)}(0)\|_{l^2} +\mathcal O(h^{k+2})\int_0^s \sum_{j=0}^2(\|u^{(j)}\|_{k+3}+\|(Lu)^{(j)}\|_{k+2}) dt.
\end{aligned}
\end{equation}
 To estimate $\|\theta_h^{(1)}(0)\|_{l^2}$ we use  Theorem \ref{thm-superapproximation},  \eqref{rho-m-infty-estimate}, and \eqref{norm-equivalence},
\begin{align*}
\|\theta_h^{(1)}(0)\|_{l^2} = & \| (u_1)_I- (R_hu)^{(1)}(0)\|_{l^2} \\
 = & \| (u_1)_I-(u_1)_p + (u_1)_p -(R_hu)^{(1)}(0)\|_{l^2} \\
 \leq & \| (u_1)_I-(u_1)_p \|_{l^2}  + \|(u_1)_p -(R_hu)^{(1)}(0)\|_{l^2} \\
 = & \| u_1-(u_1)_p \|_{l^2}  + \|(u_1)_p -R_h(u^{(1)}(0))\|_{l^2} \\
 = & \| u_1-(u_1)_p \|_{l^2}  + \|(u_1)_p -R_h(u_1)\|_{l^2} \\
 = & \mathcal O(h^{k+2})(\|u_1\|_{k+3}+\|Lu_1\|_{k+2}).
\end{align*}
Then we have
\begin{equation}
\begin{aligned}
& \|\theta_h^{(1)}\|_{0} +\|\theta_h\|_{1} \\
\leq & \mathcal O(h^{k+2})  \left( \|u_1\|_{k+3}+\|Lu_1\|_{k+2} + \int_0^s \sum_{j=0}^2(\|u^{(j)}\|_{k+3}+\|(Lu)^{(j)}\|_{k+2}) dt \right).
\end{aligned}
\end{equation}
Now with \eqref{rho-m-l2-estimate},  \eqref{rho-m-infty-estimate}, and Theorem \ref{thm-superapproximation}, the proof is concluded.
\end{proof}

\subsection{The parabolic problem}\label{error-heat}
 We now present  the main  result for the parabolic problem. 
 \begin{theorem}\label{heat-u-uh}
If $a_{ij}$, $b_j$, $c\in C^1([0,T];W^{k+1,\infty}(\Omega))$, $u\in C^{1}([0,T];H^{k+4}(\Omega))$, then for the semi-discrete scheme \eqref{heat-scheme} for problem \eqref{heat-equation-homo-bc}, we have
\begin{equation*}
\resizebox{\textwidth}{!}
     {$
\begin{aligned}
\|u_h-u\|_{L^{2}([0,T];l^2(\Omega))} \leq &  C h^{k+2} \sum_{j=0}^1(\|u^{(j)}\|_{L^{2}([0,T];H^{k+3}(\Omega))}+\|(Lu)^{(j)}\|_{L^{2}([0,T];H^{k+2}(\Omega))}),\\
\|u_h-u\|_{L^{\infty}([0,T];l^2(\Omega))} \leq & C h^{k+2} \sum_{j=0}^1(\|u^{(j)}\|_{L^{\infty}([0,T];H^{k+3}(\Omega))}+\|(Lu)^{(j)}\|_{L^{\infty}([0,T];H^{k+2}(\Omega))}),
\end{aligned}$}
\end{equation*}
where  $C$ is independent of $t$, $h$, $u$, and $f$.
\end{theorem}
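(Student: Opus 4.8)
The plan is to follow the proof of Theorem~\ref{wave-u-uh} but to exploit that \eqref{heat-scheme} is first order in time, which both simplifies the energy argument and eliminates the initial-data term $\|\theta_h^{(1)}(0)\|_{l^2}$ that appeared in the hyperbolic case. I keep the splitting $u_h-u_p=\theta_h+\rho_h$ with $\theta_h=u_h-R_hu\in V^h_0$ and $\rho_h=R_hu-u_p\in V^h_0$. Since $\rho_h$ and $u-u_p$ are already controlled by Lemma~\ref{rho-m-estimate} and Theorem~\ref{thm-superapproximation}, the whole problem reduces to a bound on $\theta_h$, after which the result follows from the triangle inequality applied to $u_h-u=\theta_h+\rho_h+(u_p-u)$.

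First I would derive the error equation for $\theta_h$, proceeding exactly as in the derivation of \eqref{theta-eqn-1}: substitute $u^{(1)}=-Lu+f$ into the elliptic projection \eqref{elliptic-proj}, subtract from the scheme \eqref{heat-scheme}, and split $u_h^{(1)}=\theta_h^{(1)}+(R_hu)^{(1)}$ using $(R_hu)^{(1)}=\rho_h^{(1)}+u_p^{(1)}$. This yields the first-order analogue of \eqref{theta-eqn-1}, namely, for all $v_h\in V^h_0$,
\begin{equation*}
\langle \theta_h^{(1)}, v_h \rangle_h + A_h(\theta_h, v_h) = -\langle \rho_h^{(1)},  v_h\rangle_h + \langle u^{(1)}-u^{(1)}_p,v_h \rangle_h .
\end{equation*}
The initial condition $u_h(0)=R_hu_0$ gives the crucial simplification $\theta_h(0)=0$, so there is no counterpart of the initial-velocity estimate from Theorem~\ref{wave-u-uh}.

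Next I would run the standard parabolic energy estimate by taking $v_h=\theta_h$, rather than $v_h=\theta_h^{(1)}$ as in the hyperbolic case. Because the discrete inner product $\langle\cdot,\cdot\rangle_h$ carries no time dependence, one has $\langle\theta_h^{(1)},\theta_h\rangle_h=\tfrac12\frac{d}{dt}\langle\theta_h,\theta_h\rangle_h$. Integrating from $0$ to $s$, using $\theta_h(0)=0$, discrete Cauchy--Schwarz and Young's inequalities, the norm equivalences \eqref{norm-equivalence}, and discarding the nonnegative term $\int_0^s A_h(\theta_h,\theta_h)\,dt$ that is controlled by the coercivity \eqref{vh-ellipticity}, I would obtain
\begin{equation*}
\|\theta_h(s)\|_0^2 \leq C\int_0^s \|\theta_h\|_0^2\,dt + C\int_0^s\left(\|\rho_h^{(1)}\|_0^2+\|u^{(1)}-u^{(1)}_p\|_{l^2}^2\right)dt .
\end{equation*}
Gronwall's inequality \eqref{gronwall-inequality} then removes the first term on the right. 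I would bound the remaining integrand by Lemma~\ref{rho-m-estimate} (with $m=1$) for $\|\rho_h^{(1)}\|_0$, and by Theorem~\ref{thm-superapproximation} together with $u_p^{(1)}=(u^{(1)})_p$ for $\|u^{(1)}-u^{(1)}_p\|_{l^2}=\mathcal O(h^{k+2})\|u^{(1)}\|_{k+2}$; both are $\mathcal O(h^{k+2})$ and generate the stated $\sum_{j=0}^1$ structure. Taking the maximum over $s\in[0,T]$ gives the $L^\infty([0,T];l^2)$ bound via \eqref{rho-m-infty-estimate} and integrating in $s$ gives the $L^2([0,T];l^2)$ bound via \eqref{rho-m-l2-estimate}; combining with the $m=0$ case of Lemma~\ref{rho-m-estimate} and Theorem~\ref{thm-superapproximation} for $u_p-u$ finishes the estimate.

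I do not anticipate a genuine obstacle here, since the parabolic case is lighter than the hyperbolic one: the first-order energy avoids differentiating $A_h(\theta_h,\theta_h)$, so the convection contribution is absorbed directly into the coercive form $A_h(\theta_h,\theta_h)$ and no analogue of the delicate term treated in \eqref{extra-convection-estimate} arises, while $\theta_h(0)=0$ removes the initial-layer contribution entirely. The only points that I expect to require care are verifying that this absorption of the convection term is legitimate when testing with $\theta_h$, and the bookkeeping of the time regularity so that the right-hand side collapses to exactly $\sum_{j=0}^1$ in the stated $L^2$- and $L^\infty$-in-time norms.
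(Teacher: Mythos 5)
Your proposal is correct, and it reaches the bound on $\theta_h$ by a genuinely simpler energy argument than the paper's. The paper recycles the hyperbolic machinery: it tests the error equation \eqref{heat-theta-eqn-1} with $v_h=\theta_h^{(1)}$, exactly as in Theorem~\ref{wave-u-uh}, so that $A_h(\theta_h,\theta_h^{(1)})$ must be converted into $\tfrac12\tfrac{d}{dt}A_h(\theta_h,\theta_h)$ via the product rule \eqref{dt-bilinear-form}; this drags in the time-differentiated form $A^{(1)}_h$, the non-symmetric convection correction of \eqref{extra-convection-estimate}, and a Young-inequality absorption with a small parameter $\epsilon$ before Gronwall can be applied. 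The payoff of that route is control of $\int_0^s\langle\theta_h^{(1)},\theta_h^{(1)}\rangle_h\,dt+\|\theta_h(s)\|_1^2$, which is strictly more than the theorem requires. Your choice $v_h=\theta_h$ is the textbook parabolic energy estimate: since the Gauss--Lobatto weights are time independent, $\langle\theta_h^{(1)},\theta_h\rangle_h=\tfrac12\tfrac{d}{dt}\langle\theta_h,\theta_h\rangle_h$, and the entire bilinear form $A_h(\theta_h,\theta_h)$ --- convection term included --- appears with a favorable sign and can simply be discarded by the assumed coercivity \eqref{vh-ellipticity}. This eliminates any need for \eqref{dt-bilinear-form}, \eqref{extra-convection-estimate}, and the $\epsilon$-juggling, and it delivers directly the quantity $\|\theta_h(s)\|_{l^2}$ that the theorem is actually about; the price is that you obtain no information on $\|\theta_h\|_1$ or $\theta_h^{(1)}$, none of which is needed here. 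The remaining ingredients --- the splitting $u_h-u=\theta_h+\rho_h+(u_p-u)$, the fact that $\theta_h(0)=0$ because $u_h(0)=R_hu_0$, Gronwall's inequality (Lemma~\ref{gronwall-inequality}), Lemma~\ref{rho-m-estimate} with $m=0,1$, Theorem~\ref{thm-superapproximation}, and the norm equivalence \eqref{norm-equivalence} --- are used identically in both proofs, and your final assembly and time-regularity bookkeeping match the paper's.
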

\begin{proof}
By our semi-discrete numerical scheme \eqref{heat-scheme} and the definition of  the  elliptic projection \eqref{elliptic-proj}, we have 
\begin{equation}\label{heat-theta-eqn-1}
\langle \theta_h^{(1)}, v_h \rangle_h + A_h(\theta_h, v_h) = -\langle \rho_h^{(1)},  v_h\rangle_h + 
\langle u^{(1)}-u_p^{(1)},v_h \rangle,\quad \forall v_h \in V^h_0.
\end{equation}
 
Take $v_h = \theta_h^{(1)} $ in \eqref{heat-theta-eqn-1}  and  integrate with respect to $t$ from $0$ to $s$, 
\begin{equation}\label{heat-theta-estimate} 
\resizebox{\textwidth}{!}
     {$
\begin{aligned}
& \int_0^s\langle \theta_h^{(1)}, \theta_h^{(1)} \rangle_h + \frac12 \frac{d}{dt}A_h(\theta_h,\theta_h)dt\\
 = & \frac12 \int_0^s A^{(1)}_h(\theta_h,\theta_h)-\langle  \mathbf b\cdot\nabla\theta_h ,\theta_h^{(1)}\rangle_h +\langle  \mathbf b\cdot\nabla\theta_h^{(1)} ,\theta_h\rangle_h - 2\langle \rho_h^{(1)}, \theta_h^{(1)}\rangle_h + 2\langle u^{(1)}-u^{(1)}_p, \theta_h^{(1)} \rangle_h dt.
\end{aligned}$}
\end{equation}
Note that $\theta_h(0) = 0$, then with \eqref{norm-equivalence}, \eqref{extra-convection-estimate}, and \eqref{heat-theta-estimate}  we have
\begin{equation*}
\begin{aligned}
& \int_0^s\langle \theta_h^{(1)}, \theta_h^{(1)} \rangle_hdt +\|\theta_h(s)\|^2_{1} \leq \int_0^s\langle \theta_h^{(1)}, \theta_h^{(1)} \rangle_hdt + C A_h(\theta_h(s),\theta_h(s))\\
\leq & C\int_0^s  \|\theta_h\|^2_{1}dt + C\int_0^s \| \theta_h^{(1)}\|_{l^2}\|\theta_h\|_1dt +C\int_0^s \|\rho_h^{(1)}\|_{l^2} \|\theta_h^{(1)}\|_{l^2} dt \\
 & + C\int_0^s  \| u^{(1)}-u^{(1)}_p\|_{l^2}\|\theta_h^{(1)}\|_{l^2}dt\\
\leq & C\int_0^s  \|\theta_h\|^2_{1}dt + \int_0^s \epsilon\langle \theta_h^{(1)}, \theta_h^{(1)} \rangle_h +  \frac{C}{4\epsilon}\|\theta_h\|^2_1dt +\int_0^s \epsilon\langle \theta_h^{(1)}, \theta_h^{(1)} \rangle_h+\frac{C}{4\epsilon}\|\rho_h^{(1)}\|^2_{0}  dt \\
 & + \int_0^s  \epsilon\langle \theta_h^{(1)}, \theta_h^{(1)} \rangle_h+\frac{C}{4\epsilon}\| u^{(1)}-u^{(1)}_p\|^2_{l^2}dt,
\end{aligned}
\end{equation*}
where Cauchy-Schwartz inequality  was  applied in the last inequality.
Thus we have
\begin{equation*}
\begin{aligned}
 (1-3\epsilon)\int_0^s\langle \theta_h^{(1)}, \theta_h^{(1)} \rangle_hdt +\|\theta_h(s)\|^2_{1} \leq & C(1+\frac{1}{4\epsilon}) \int_0^s  \|\theta_h\|^2_{1}dt +  \frac{C}{4\epsilon}\int_0^s \|\rho_h^{(1)}\|^2_{0}dt \\
&  + \frac{C}{4\epsilon}\int_0^s \| u^{(1)}-u^{(1)}_p\|^2_{l^2}dt.
\end{aligned}
\end{equation*}
 Now take  $\epsilon$ small enough to make $1-3\epsilon\geq \frac12$ then 
\begin{equation}\label{heat-theta-estimate-1} 
\begin{aligned}
 \frac12 \int_0^s\langle \theta_h^{(1)}(s), \theta_h^{(1)} \rangle_h(s)dt +\|\theta_h(s)\|^2_{1} \leq  C\int_0^s \|\rho_h^{(1)}\|^2_{0}dt + C\int_0^s \| u^{(1)}-u^{(1)}_p\|^2_{l^2}dt\\
+ C\int_0^s \left( \|\theta_h(t)\|^2_{1}+ \frac12 \int_0^t\langle \theta_h^{(1)}(\eta), \theta_h^{(1)}(\eta) \rangle_h d\eta\right)dt. 
\end{aligned}
\end{equation}
 Next, apply  Gronwall's inequality to eliminate the last term of the right hand side of \eqref{heat-theta-estimate-1}  to find 
\begin{equation*}
\frac12\int_0^s\langle \theta_h^{(1)}, \theta_h^{(1)} \rangle_hdt + \|\theta_h\|^2_{1} \leq C \int_0^s \|\rho_h^{(1)}\|^2_{0}dt + C\int_0^s \| u^{(1)}-u^{(1)}_p\|^2_{l^2}dt.
\end{equation*}
 Using \eqref{rho-m-l2-estimate},  \eqref{rho-m-infty-estimate}, and Theorem \ref{thm-superapproximation} we have 
\begin{align*}
\frac12\int_0^s\langle \theta_h^{(1)}, \theta_h^{(1)} \rangle_hdt + \|\theta_h\|^2_{1} \leq \mathcal O(h^{k+2}) \int_0^s \sum_{j=0}^1(\|u^{(j)}\|_{k+3}+\|(Lu)^{(j)}\|_{k+2}) dt,
\end{align*}
 concluding the proof.
\end{proof}

\subsection{The linear Schr\"{o}dinger equation}\label{error-schrodinger}
Consider  the  problem 
\begin{equation}\label{schrodinger-eqn}
 \left\{\begin{array}{ll}
i u_{t}=-\Delta u+V u+f, & \textrm{ in } \Omega \times[0, T], \\
u(\mathbf x, t)=0, & \textrm{ on } \partial \Omega \times[0, T], \\
u(\mathbf x, 0)=u_{0}(\mathbf x), &  \textrm{ in } \Omega,
\end{array}\right.
\end{equation}
where $\Omega \in R^{2}$ is a rectangular domain, the functions $u_{0}(\mathbf x), f(\mathbf x, t)$, and the solution $u(\mathbf x, t)$ are complex-valued  while  the potential function $V(\mathbf x,t)$ is real-valued,  non-negative, and  bounded for all $(\mathbf x,t) \in \Omega \times[0, T]$.  

 In this subsection we work with complex-valued functions and  the definition of inner product and the induced norms are modified accordingly. For instance,  for complex-valued $v$, $w$ $\in L^2(\Omega)$, the inner product is defined as
$$
(v, w) :=  \int_{\Omega} v\bar{w} d\mathbf x.
$$
We assume all the functions of the function spaces defined previously are complex-valued for this subsection, such as $H^k(\Omega)$, $H_0^k(\Omega)$, $V^h_0$, etc.

The   variational form of \eqref{schrodinger-eqn} is: for $t \in[0, T],$ find $u(t) \in H_{0}^{1}(\Omega)$ satisfying:
\begin{equation}\label{schrodinger-equation-homo-bc}
\left\{\begin{array}{ll}
i \left(u_{t}, v\right) - (\nabla u, \nabla v) - (Vu,  v)  = (f, v), & \forall v \in H_{0}^{1}(\Omega), \\
u(0)=u_{0},  & \forall v \in H_{0}^{1}(\Omega).
\end{array}\right.
\end{equation}
The semi-discrete numerical scheme discretizing  \eqref{schrodinger-equation-homo-bc} is to find $u_h \in V^h_0$ satisfying
\begin{equation}\label{schrodinger-scheme}
\left\{\begin{array}{ll}
i\langle (u_h)_{t}, v_h \rangle_h - \langle \nabla u_h, \nabla v_h\rangle_h - \langle Vu_h,  v_h\rangle_h  = \langle f, v_h\rangle_h, & \forall v_h \in V^h_0, \\
u_h(0)=(u_{0})_I,
\end{array}\right.
\end{equation}
and the elliptic projection $ R_hu\in V_0^h$ is defined as 
\begin{equation}
\langle \nabla R_hu, \nabla v_h \rangle_h + \langle V R_hu,  v_h \rangle_h = \langle -\Delta u + Vu,  v_h \rangle_h, \quad \forall v_h \in V^h_0.
\end{equation}
 
As in Section \ref{elliptic-proj-error-estimate},  we split the error into two parts  
$$e=\theta_h + \rho_h,$$ 
where $\theta_h = u_h-R_h u\in V^h_0$ and $\rho_h = R_hu - u_p \in V^h_0$.  The estimates for $\rho_h^{(m)}$,  $m\geq 0$  from  Lemma \ref{rho-m-estimate}  are still valid.  

\begin{theorem}\label{schrodinger-u-uh}
If $u\in C^{1}([0,T];H^{k+4}(\Omega))$, then for the semi-discrete scheme \eqref{schrodinger-scheme} for problem \eqref{schrodinger-eqn}, we have
\begin{equation*}
\resizebox{\textwidth}{!}
     {$
\begin{aligned}
\|u_h-u\|_{L^{2}([0,T];l^2(\Omega))} \leq & C h^{k+2} \sum_{j=0}^1(\|u^{(j)}\|_{L^{2}([0,T];H^{k+3}(\Omega))}+\|(Lu)^{(j)}\|_{L^{2}([0,T];H^{k+2}(\Omega))}),\\
\|u_h-u\|_{L^{\infty}([0,T];l^2(\Omega))} \leq & C h^{k+2} \sum_{j=0}^1(\|u^{(j)}\|_{L^{\infty}([0,T];H^{k+3}(\Omega))}+\|(Lu)^{(j)}\|_{L^{\infty}([0,T];H^{k+2}(\Omega))}),
\end{aligned}$}
\end{equation*}
where   $C$ is independent of $t$, $h$, $u$, and $f$.
\end{theorem}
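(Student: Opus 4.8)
The plan is to reuse the elliptic-projection splitting $e=\theta_h+\rho_h$ with $\rho_h=R_hu-u_p$ and $\theta_h=u_h-R_hu\in V^h_0$, exactly as for the wave and parabolic problems. Since the paper already notes that Lemma \ref{rho-m-estimate} remains valid in the complex setting, $\rho_h$ and all its time derivatives are controlled at order $h^{k+2}$, so the entire task reduces to estimating $\theta_h$. First I would derive the error equation. Substituting $u_h=\theta_h+R_hu$ into the scheme \eqref{schrodinger-scheme}, using the elliptic-projection identity together with the exact relation $-\Delta u+Vu=iu^{(1)}-f$, and cancelling the common forcing $\langle f,v_h\rangle_h$, I expect to obtain
\[
i\langle \theta_h^{(1)}, v_h\rangle_h - \langle \nabla\theta_h, \nabla v_h\rangle_h - \langle V\theta_h, v_h\rangle_h = i\langle u^{(1)} - u_p^{(1)}, v_h\rangle_h - i\langle \rho_h^{(1)}, v_h\rangle_h,\quad \forall v_h\in V^h_0,
\]
which is the complex analogue of the parabolic error equation \eqref{heat-theta-eqn-1}, with the decomposition $(R_hu)^{(1)}=u_p^{(1)}+\rho_h^{(1)}$ absorbing the time dependence of the (time-dependent) projection.

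The key step, and the place where the argument genuinely departs from the parabolic and wave proofs, is the choice of test function together with the complex structure. Rather than testing against $\theta_h^{(1)}$, I would set $v_h=\theta_h$ and then take the imaginary part of the identity. Because $V$ is real and the Gauss--Lobatto quadrature has positive weights, both $\langle\nabla\theta_h,\nabla\theta_h\rangle_h$ and $\langle V\theta_h,\theta_h\rangle_h$ are real and therefore drop out, while $\mathrm{Im}\bigl(i\langle\theta_h^{(1)},\theta_h\rangle_h\bigr)=\mathrm{Re}\,\langle\theta_h^{(1)},\theta_h\rangle_h=\tfrac12\frac{d}{dt}\langle\theta_h,\theta_h\rangle_h$. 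This yields the clean discrete energy identity
\[
\tfrac12\frac{d}{dt}\langle\theta_h, \theta_h\rangle_h = \mathrm{Re}\,\langle u^{(1)} - u_p^{(1)}, \theta_h\rangle_h - \mathrm{Re}\,\langle \rho_h^{(1)}, \theta_h\rangle_h,
\]
the discrete counterpart of $L^2$-conservation for the Schr\"odinger flow. Note that \emph{no} coercivity or $V^h$-ellipticity is needed, in contrast to the earlier proofs. Applying Cauchy--Schwarz on the right and the norm equivalence \eqref{norm-equivalence}, the right-hand side does not feed back $\theta_h$ in any essential way, so the estimate closes either by dividing through and integrating in time, or via Young's inequality followed by one application of Gronwall's inequality (Lemma \ref{gronwall-inequality}) to handle the $\tfrac12\|\theta_h\|_0^2$ term robustly; either route gives
\[
\|\theta_h(s)\|_0 \leq C\|\theta_h(0)\|_0 + C\int_0^s \left(\|u^{(1)} - u_p^{(1)}\|_{l^2} + \|\rho_h^{(1)}\|_0\right)\,dt .
\]

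It then remains to bound the two ingredients on the right. The integrand is $\mathcal O(h^{k+2})$ by Theorem \ref{thm-superapproximation} (for $u^{(1)}-u_p^{(1)}$, using $(u^{(1)})_p=u_p^{(1)}$) and by \eqref{rho-m-l2-estimate}--\eqref{rho-m-infty-estimate} of Lemma \ref{rho-m-estimate} (for $\rho_h^{(1)}$). For the initial term I would exploit the prescribed data $u_h(0)=(u_0)_I$, writing $\theta_h(0)=\bigl[(u_0)_I-(u_0)_p\bigr]-\rho_h(0)$; the crucial observation is that $(u_0)_I$ and $u_0$ agree at every point of $Z_0$, so that $\|(u_0)_I-(u_0)_p\|_{l^2}=\|u_0-(u_0)_p\|_{l^2}=\mathcal O(h^{k+2})$ by Theorem \ref{thm-superapproximation}, while $\|\rho_h(0)\|_0=\mathcal O(h^{k+2})$ by \eqref{rho-m-infty-estimate}. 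Hence $\|\theta_h(s)\|_0=\mathcal O(h^{k+2})$ uniformly in $s$. Finally I would combine the pieces through $u_h-u=\theta_h+\rho_h-(u-u_p)$, the equivalence $\|\cdot\|_{l^2}\sim\|\cdot\|_0$ on $V^h$, and one more application of Theorem \ref{thm-superapproximation} for $\|u-u_p\|_{l^2}$; taking the $L^2$- and $L^\infty$-norms in time then produces the two stated estimates. I do not anticipate a genuine analytic obstacle here beyond the already-established projection and quadrature lemmas: the conceptual crux is simply that the spatial operator is skew-adjoint (the factor $i$ leaves no dissipation or coercivity to exploit), which is precisely why the imaginary-part test against $\theta_h$ is the right move; the remaining care is bookkeeping of the sesquilinear/conjugate structure so that the reality of the gradient and potential terms is correctly used, and the initial-data step where the interpolation error is made to vanish on $Z_0$.
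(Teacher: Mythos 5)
Your proposal is correct and follows essentially the same route as the paper: the identical splitting $e=\theta_h+\rho_h$, the same error equation (yours is the paper's \eqref{schrodinger-theta-eqn-1} multiplied by $i$, so taking the imaginary part in your form is literally the paper's step of testing with $v_h=\theta_h$ and taking the real part), the same observation that the skew-adjoint gradient and potential terms drop out with no coercivity needed, the same treatment of $\theta_h(0)$ via $(u_0)_I=u_0$ on $Z_0$, and the same final assembly through Theorem \ref{thm-superapproximation} and Lemma \ref{rho-m-estimate}. The only cosmetic difference is that you mention an optional Gronwall/Young route, whereas the paper simply divides by $\|\theta_h\|_{l^2}$ and integrates.
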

\begin{proof}

 As in the parabolic case we start by estimating $\theta_h$. 
\begin{equation}\label{schrodinger-theta-eqn-1}
\langle \theta_h^{(1)}, v_h \rangle_h + i\langle \nabla \theta_h, \nabla v_h\rangle_h +i\langle V\theta_h,  v_h\rangle_h = -\langle \rho_h^{(1)},  v_h\rangle_h + 
\langle u^{(1)}-u_p^{(1)},v_h \rangle_h,\quad \forall v_h \in V^h_0.
\end{equation}
Taking $v_h =\theta_h$ in \eqref{schrodinger-theta-eqn-1}  and taking real part,
\begin{align*}
\frac{d}{dt}\|\theta_h\|^2_{l^2(\Omega)}  = \frac{d}{dt}\langle \theta_h,\theta_h \rangle_h = & 2 \textit{Re}\left(-\langle \rho_h^{(1)},  \theta_h\rangle_h + 
\langle u^{(1)}-u_p^{(1)}, \theta_h \rangle_h\right) \\
\leq & 2\left(\| \rho_h^{(1)}\|_{l^2(\Omega)}+\|u^{(1)}-u_p^{(1)}\|_{l^2(\Omega)}\right)\|\theta_h\|_{l^2(\Omega)}.
\end{align*}
 Since $\frac{d}{dt}\|\theta_h\|^2_{l^2(\Omega)} =2 \|\theta_h\|_{l^2(\Omega)} \frac{d}{dt}\|\theta_h\|_{l^2(\Omega)}$, it impilies \begin{align*}
\frac{d}{dt}\|\theta_h\|_{l^2(\Omega)} \leq  \| \rho_h^{(1)}\|_{l^2(\Omega)}+\|u^{(1)}-u_p^{(1)}\|_{l^2(\Omega)}.
\end{align*}
 Upon integrating this inequality with respect to $t$ from $0$ to $s$ we have 
\begin{align*}
\|\theta_h(s)\|_{l^2(\Omega)} \leq  \|\theta_h(0)\|_{l^2(\Omega)} + \int_0^s (\| \rho_h^{(1)}\|_{l^2(\Omega)}+\|u^{(1)}-u_p^{(1)}\|_{l^2(\Omega)} )dt.
\end{align*}
 Now, using Theorem \ref{thm-superapproximation},  \eqref{rho-m-infty-estimate},  and \eqref{norm-equivalence}  we have
\begin{align*}
\|\theta_h(0)\|_{l^2} = & \| (u_0)_I- (R_hu)(0)\|_{l^2} \\
 = & \| (u_0)_I-(u_0)_p + (u_0)_p -(R_hu)(0)\|_{l^2} \\
 \leq & \| (u_0)_I-(u_0)_p \|_{l^2}  + \|(u_0)_p -(R_hu)(0)\|_{l^2} \\
 = & \| u_0-(u_0)_p \|_{l^2}  + \|(u_0)_p -R_hu_0\|_{l^2} \\
 = & O(h^{k+2})(\|u_0\|_{k+3}+\|Lu_0\|_{k+2}).
\end{align*}

With this result in concert with \eqref{rho-m-l2-estimate}, \eqref{rho-m-infty-estimate}, and Theorem \ref{thm-superapproximation} we note 
\begin{equation*}
    \resizebox{\textwidth}{!}
     {$
\begin{aligned}
\|\theta_h(s)\|_{l^2(\Omega)} \leq \mathcal O(h^{k+2})\left(\|u_0\|_{k+3}+\|Lu_0\|_{k+2} +  \int_0^s \sum_{j=0}^1(\|u^{(j)}\|_{k+3}+\|(Lu)^{(j)}\|_{k+2}) dt \right).
\end{aligned}$}
\end{equation*}
 Together with \eqref{rho-m-l2-estimate}, \eqref{rho-m-infty-estimate}, and Theorem \ref{thm-superapproximation}, proof is concluded. 
\end{proof}

 \subsection{Neumann boundary conditions and $\ell^\infty$-norm estimate}
 \label{sec-Neumann}
 
 For homogeneous Neumann type boundary conditions, due to Lemma \ref{bilinear-u-up},  in general we can only prove $(k+\frac32)$-th order accuracy for the hyperbolic equation, parabolic equation, and linear Schr\"{o}dinger equation. As explained in Remark \ref{rmk-loss}, the half order loss
happens for homogeneous Neumann boundary condition only when the second order operator coefficient $\mathbf a$ is not diagonal, e.g., when the PDE contains second order mixed derivatives. If  $\mathbf a$ is diagonal, then all results of $(k+2)$-th order in $\ell^2$ norm in this Section can be easily extended to the homogeneous Neumann boundary conditions.  See Section 2.8 in \cite{Li2021} for a detailed discussion of nonhomogeneous Neumann boundary conditions. 
 
 For Lagrangian $Q^k$ finite element method without any quadrature solving the elliptic equation with Dirichlet boundary conditions,  the best superconvergence order in max norm of function values at Gauss-Lobatto that one can prove is   $\mathcal O(|\log h| h^{k+2})$ in two dimensions, see \cite{li2020superconvergence} and references therein. 
Thus we do not expect better results can be proven in 
the $Q^k$ spectral element method in $\ell^\infty$ norm over all nodes of degree of freedoms.  

\section{The implementation for nonhomogeneous Dirichlet boundary conditions}\label{implementation-nonhom}
Consider  the  hyperbolic problem on $\Omega=(0,1)^2$ with compatible nonhomogeneous Dirichlet boundary condition and initial value
\begin{equation}\label{wave-equation-nonhomo-bc}
\begin{aligned}
u_{tt}
 = & -Lu + f(\mathbf x,t) & \text{ in } \Omega \times (0,T], \\
u(\mathbf x,t)  = & g &\text{ on } \partial\Omega \times [0,T], \\
u(\mathbf x,0) = & u_0(\mathbf x) ,\quad u_t(\mathbf x,0) = u_1(\mathbf x) &\text{ on } \Omega\times\{t=0\}.
\end{aligned}
\end{equation}
As in \cite{gockenbach2006understanding, li2020superconvergence}, by abusing notation, we define
\[g(x,y,t)=\begin{cases}
   0,& \mbox{if}\quad (x,y)\in (0,1)\times(0,1),\\
   g(x,y,t),& \mbox{if}\quad (x,y)\in \partial\Omega,\\
  \end{cases}
\] 
and
define $g_I\in V^h$  as the $Q^k$ Lagrange interpolation at $(k+1)\times (k+1)$ Gauss-Lobatto points for each cell on $  \Omega$ of $g(x,y,t)$.
Namely, $g_I\in V^h$ is the piecewise $Q^k$ interpolant   of $g$ along $\partial\Omega$ at the boundary grid points and $g_I=0$ at the interior grid points. 
Then the semi-discrete scheme for problem \eqref{wave-equation-nonhomo-bc} is as follows: for $t\in[0,T]$, find $\tilde u_h\in V^h_0$ such that
 \begin{equation}\label{wave-scheme-nonhomo-bc}
\begin{aligned}
\langle \tilde u_h^{(2)}, v_h \rangle_h + A_h(\tilde u_h, v_h) = & \langle f, v_h \rangle_h- A_h(g_I,v_h),\quad  \forall v_h \in V^h_0,\\
\tilde u_h(0) = R_hu_0, \quad \tilde u_h^{(1)}(0) = & (u_1)_I.
\end{aligned}
\end{equation}
Then 
\begin{equation}
u_h := \tilde u_h + g_I,
\end{equation}
is the desired numerical solution. Notice that $u_h$ and $\tilde u_h$ are the same at all interior grid points. 

For the initial value of numerical solution, instead of using discrete elliptic projection, we can also use $\tilde u_h(0)=u(x,y,0)_I$ in \eqref{wave-scheme-nonhomo-bc} where  $u(x,y,0)_I$ is the piecewise Lagrangian $Q^k$ interpolation of $u(x,y,0)$. In all numerical tests in Section \ref{numerical-test}, $(k+2)$-th order accuracy is still observed for the initial condition $\tilde u_h(0)=u(x,y,0)_I$.

 The treatment for nonhomogeneous Dirichlet boundary condition above can be extended naturally to  the parabolic equation and linear Schr\"{o}dinger equation,

\begin{remark}
For the $(k+2)$-th order accuracy of the scheme \eqref{wave-scheme-nonhomo-bc}, it can be shown analogously as in \cite{li2020superconvergence}, and in Section \ref{elliptic-proj-error-estimate} and Section \ref{error-estimate} by defining discrete elliptic projection as
\begin{equation}\label{elliptic-proj-nonhomo}
R_h u := \tilde R_h u + g_I,
\end{equation}
where $\tilde R_hu \in V_0^h$ satisfying
\begin{equation*}
A_h(\tilde R_h u,v_h) = \langle -Lu,v_h \rangle_h- A_h(g_I,v_h),\quad \forall v_h\in V^h_0, \quad 0\leq t \leq T.
\end{equation*} 
\end{remark}

\section{Numerical examples}\label{numerical-test}
In this section we present numerical examples for the wave equation, a parabolic equation and the Schr\"{o}dinger equation. 
\subsection{Numerical examples for the wave equation}
\subsubsection{Timestepping} \label{sec:timestepping}
The so called modified equation technique, \cite{Dablain1986,37342,henshaw:1730,joly2010optimized}, is an attractive option for timestepping the scalar wave equation. 
After semidiscretization the method \eqref{wave-scheme} can be written as 
\[
\frac{d^2 {\bf u}_h}{dt^2} = Q {\bf u}_h,
\]
where ${\bf u}_h$ is a vector containing all the degrees of freedom and $Q$ is a matrix.  To evolve in time we expand the approximate solution around $t+\Delta t$ and $t-\Delta t$   
\[
{\bf u}_h(t+\Delta t) + {\bf u}_h(t-\Delta t) = 2 {\bf u}_h(t) + \Delta t^2 \frac{d^2 {\bf u}_h(t)}{dt^2} 
+ \frac{\Delta t^4}{12} \frac{d^4 {\bf u}_h(t)}{dt^4} 
+ \frac{\Delta t^6}{360} \frac{d^6 {\bf u}_h(t)}{dt^6} + \mathcal{O}(\Delta t^8).  
\]
Replacing the even time derivatives with applications of the matrix $Q$ we obtain, for example, a 6th order accurate explicit temporal approximation   
\[
{\bf u}_h(t+\Delta t) + {\bf u}_h(t-\Delta t) = 2 {\bf u}_h(t) + \Delta t^2 Q {\bf u}_h(t) 
+ \frac{\Delta t^4}{12} Q^2 {\bf u}_h(t) + \frac{\Delta t^6}{360} Q^3 {\bf u}_h(t).  
\]

 Note that the matrix $Q$ does not need to be explicitly known, and an implicit definition through a ``matrix-vector multiplication'' subroutine will suffice. In that case the three last terms on the right hand side of the above equation would be computed by repeated application of $Q$. For example to compute ${\bf u}_h(t+\Delta t)$ one would assign ${\bf v}_h = 2{\bf u}_h(t)-{\bf u}_h(t-\Delta t)$, ${\bf u}_h(t-\Delta t) = {\bf u}_h(t)$, followed by three applications of $Q$ and updates of ${\bf v}_h$: (1) ${\bf w}_h = Q {\bf u}_h(t)$, ${\bf v}_h \leftarrow {\bf v}_h +  \Delta t^2 {\bf w}_h$, ${\bf u}_h(t) = {\bf w}_h$, (2) ${\bf w}_h = Q {\bf u}_h(t)$, ${\bf v}_h \leftarrow {\bf v}_h +  \Delta t^4/12 {\bf w}_h$, ${\bf u}_h(t) = {\bf w}_h$, (3) ${\bf w}_h = Q {\bf u}_h(t)$, ${\bf v}_h \leftarrow {\bf v}_h +  \Delta t^6/360 {\bf w}_h$. The time update is then finalized by the assignment ${\bf u}_h(t) = {\bf v}_h$, which can conveniently be implemented as a for loop.  

\subsubsection{Standing mode with Dirichlet conditions} \label{sec:Dsquare_mode}

In this experiment we solve the the wave equation  $u_{tt}=u_{xx}+u_{yy}$  with homogenous Dirichlet boundary conditions in the square domain $(x,y) \in [-\pi,\pi]^2$. We take the initial data  to be 
\[
u(x,y,0) = \sin (x) \sin(y), \ \ u_t(x,y,0) = 0,
\] 
which results in the exact standing mode solution 
\[
u(x,y,0) = \sin (x) \sin(y) \cos(\sqrt{2}t).
\]

We consider the two cases $k = 2$ and $k=4$ and discretize on three different sequences of grids. The first sequence contains only plain Cartesian of increasing refinement. The second sequence consists of the same grids as in the Cartesian sequence but with all the interior nodes perturbed by a two dimensional uniform random variable with each component drawn from $[-h/4,h/4]$. The nodes of the third sequence are 
\[
(x,y) =  (\xi + 0.1 \sin(\xi) \sin(\eta),\eta + 0.1 \sin(\eta) \sin(\xi)), \ \ \ \ (\xi,\eta) = [-\pi,\pi]^2,
\] 
and this is refined in the same ways as the Cartesian sequence. 
Typical examples of the grids are displayed in Figure \ref{fig:square_mode_grids}.
 Even though the equation contains no coefficients, 
variable coefficients are still involved for the second and the third sequences of grids. The variable coefficients are induced by the geometric transformations of the elements in the mesh to a reference rectangle element. However, on a randomly perturbed grid, the variable coefficients are not smooth across cell interfaces. The variable coefficients are smooth in a smoothly perturbed grid.  

\begin{figure}[htb]
\begin{center}
\includegraphics[width=0.45\textwidth]{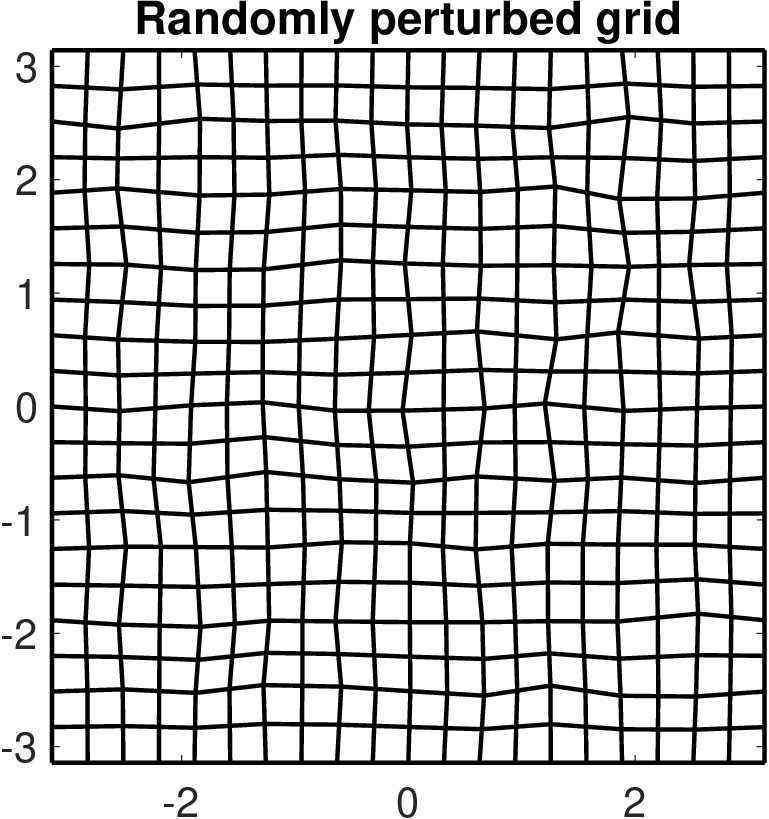}
\includegraphics[width=0.45\textwidth]{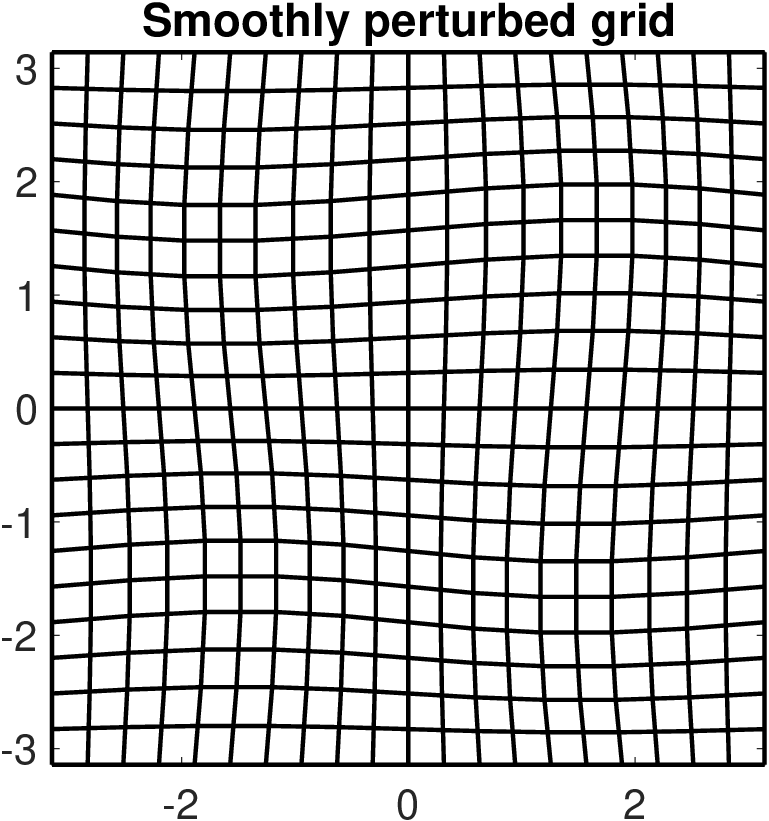}
\caption{Two typical grids used in the numerical examples in Section \ref{sec:Dsquare_mode} and \ref{sec:square_mode}. \label{fig:square_mode_grids}}
\end{center}
\end{figure}

\begin{figure}[htb]
\begin{center}
\includegraphics[width=0.45\textwidth]{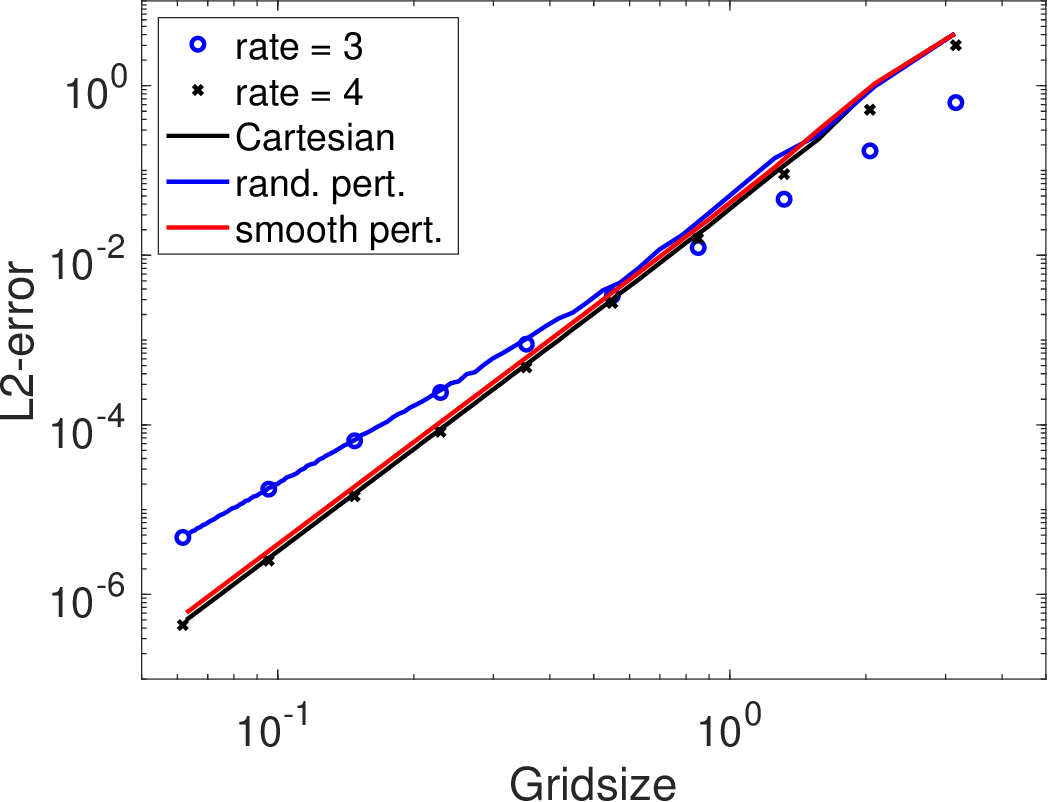}
\includegraphics[width=0.45\textwidth]{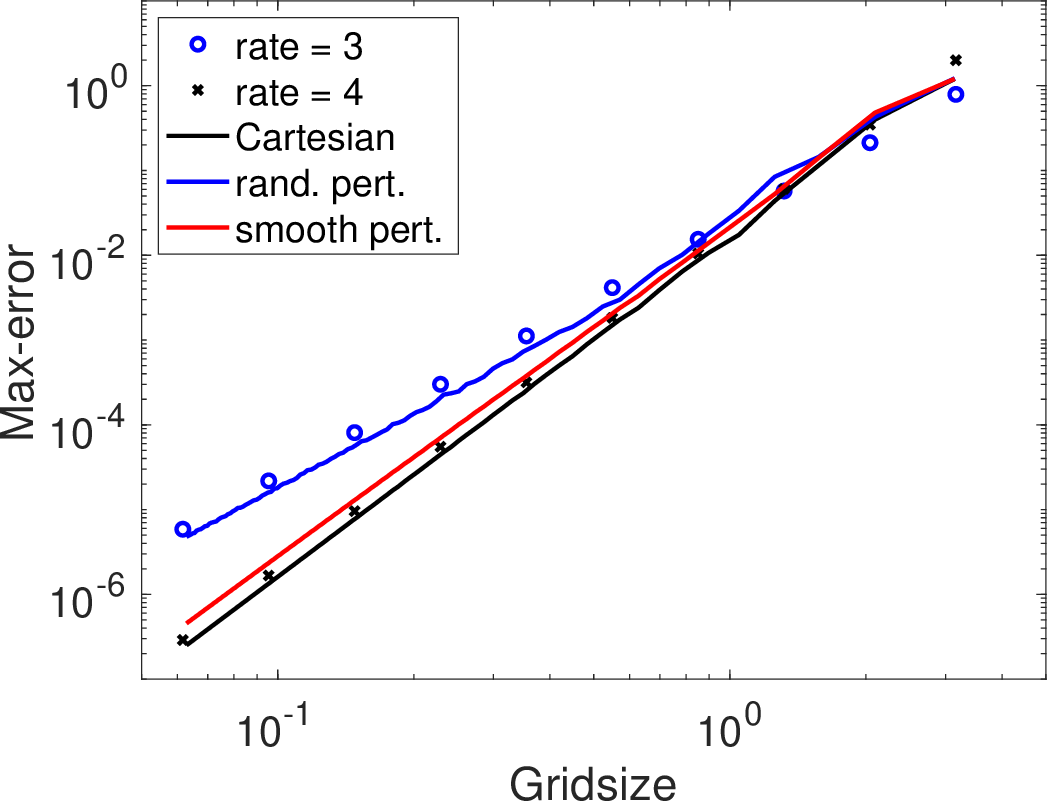}
\includegraphics[width=0.45\textwidth]{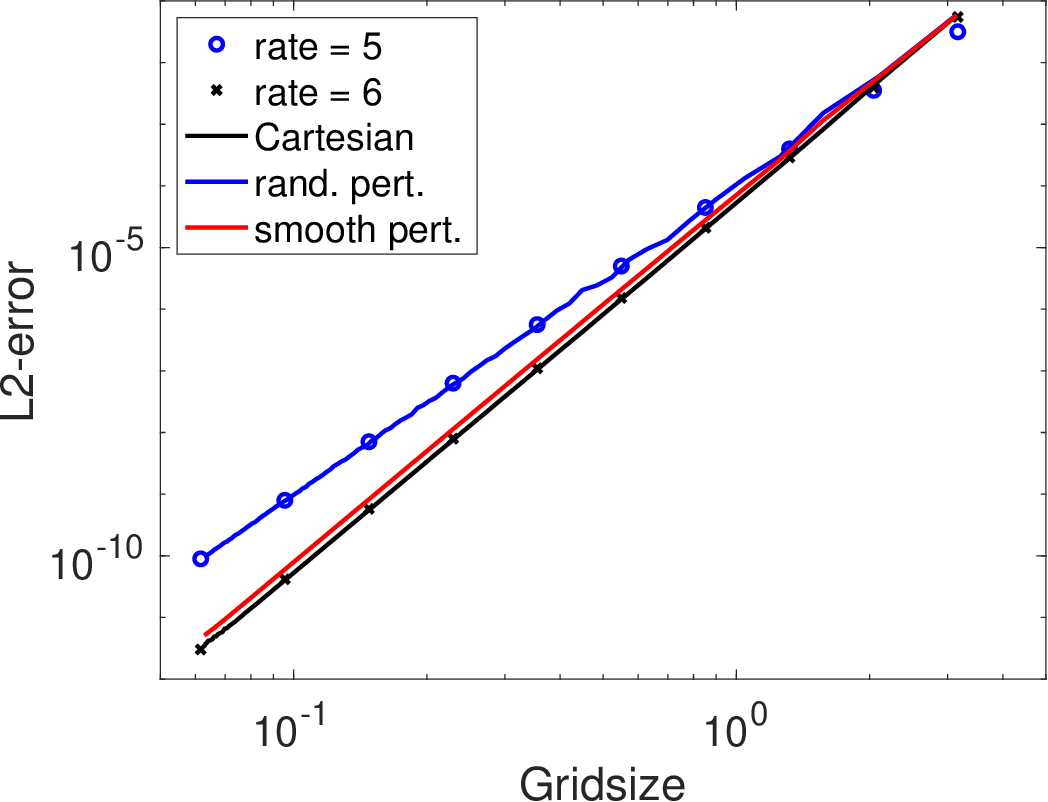}
\includegraphics[width=0.45\textwidth]{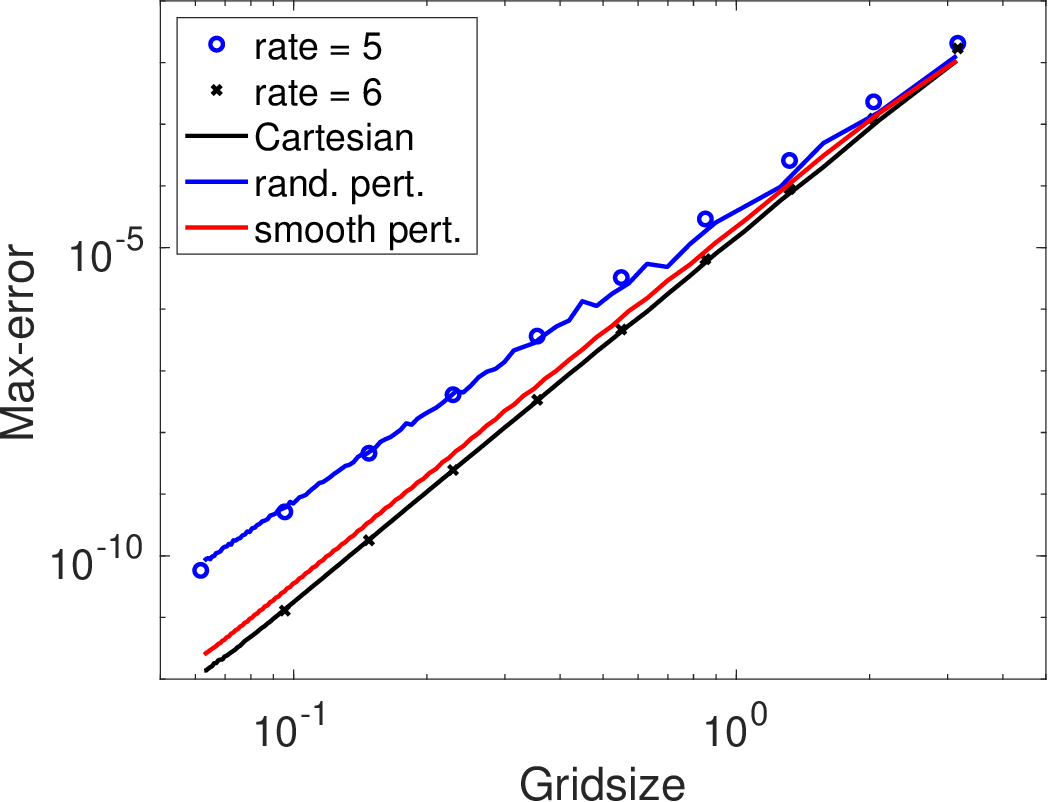}
\caption{Dirichlet problem in a square. Errors measured in the $l^2$ and the $l^\infty$ norms for the three different sequences of grids. The top row is for $k=2$ and the bottom row is for $k=4$. \label{fig:D_square_mode_errs}}
\end{center}
\end{figure}

We evolve the numerical solution until time 5 by the time stepping discussed in Section \ref{sec:timestepping} of order of accuracy 4 when $k=2$ and 6 when $k=4$. To get clean measurements of the error we report the time integrated errors 
\[
\left(\int_0^{5} \|u(\cdot,t) - u_h(\cdot,t)\|^2_{l^2} \, dt \right)^{\frac{1}{2}}, \ \ \ \ \int_0^{5} \|u(\cdot,t) - u_h(\cdot,t)\|_{l^\infty} \, dt,
\]
for the spatial $l^2$ and $l^\infty$ errors respectively.

The results are displayed in Figure \ref{fig:D_square_mode_errs}. Note that here and in the rest of this section the solid lines in the figures are the computed errors, using many different grid sizes, and the symbols are indicating the slopes or rates of convergence of the curves. 
 The Cartesian grids and smoothly perturbed grids satisfy the assumptions of the theory developed in this paper while the second sequence of randomly perturbed grids does not.
The results confirm the theoretical predictions  for smooth variable coefficients  as the rate of convergence is $k+2$ for the $l^2$-norm in the cases of the Cartesian meshes and the smoothly perturbed meshes. 
We also observe the rate $k+2$ in the $l^\infty$-norm for these cases. For the non-smooth variable coefficients resulting from the randomly perturbed grid, which is not covered by our theory, we see a rate of convergence of $k+1$ in the $l^2$-norm. 

\subsubsection{Standing mode in a sector of an annulus with Dirichlet conditions}  \label{sec:Dann_mode}
\begin{figure}[htb]
\begin{center}
\includegraphics[width=0.45\textwidth]{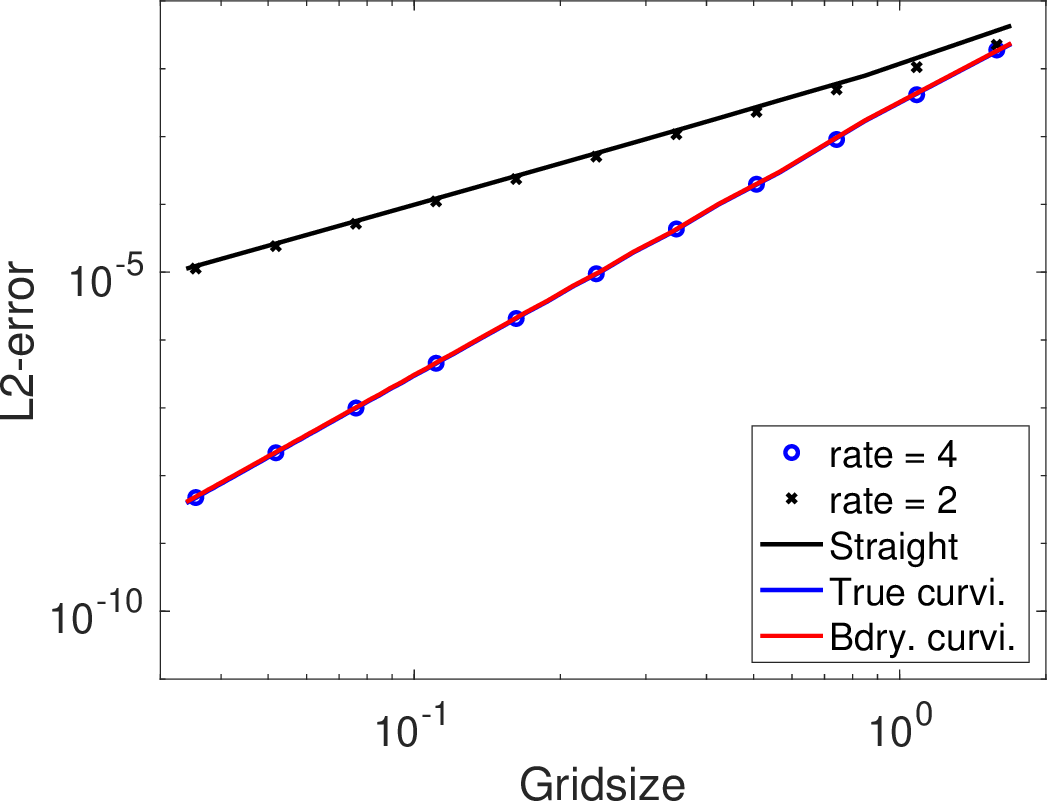}
\includegraphics[width=0.45\textwidth]{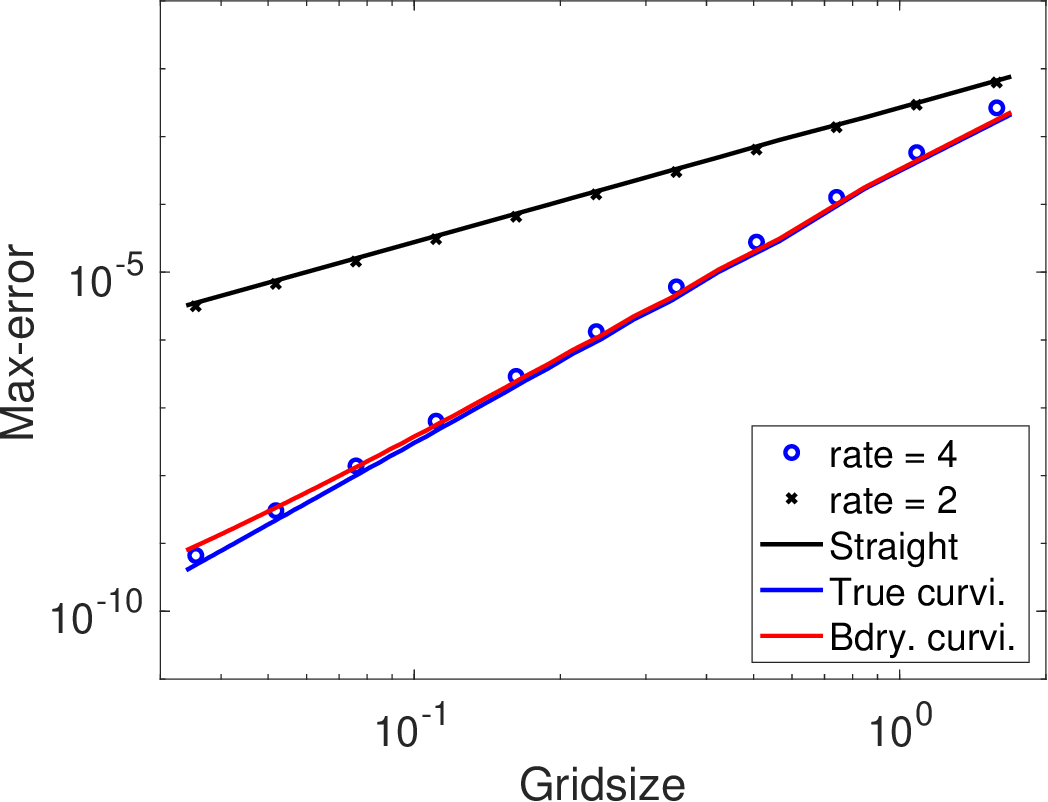}
\includegraphics[width=0.45\textwidth]{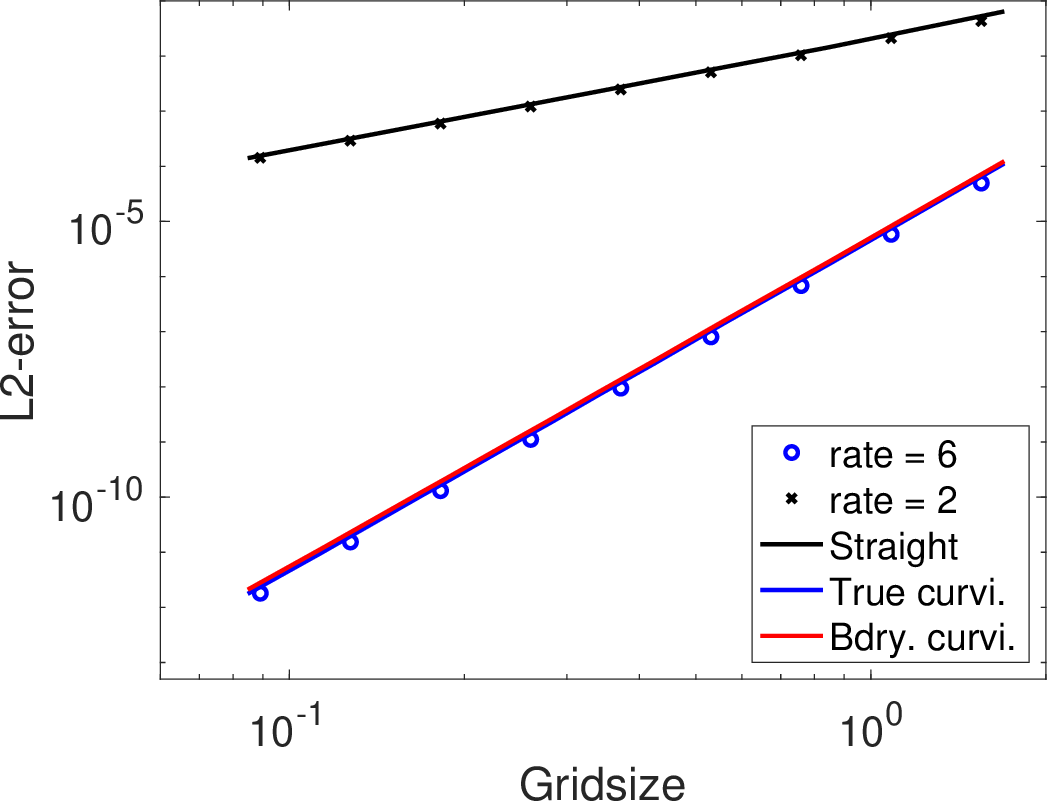}
\includegraphics[width=0.45\textwidth]{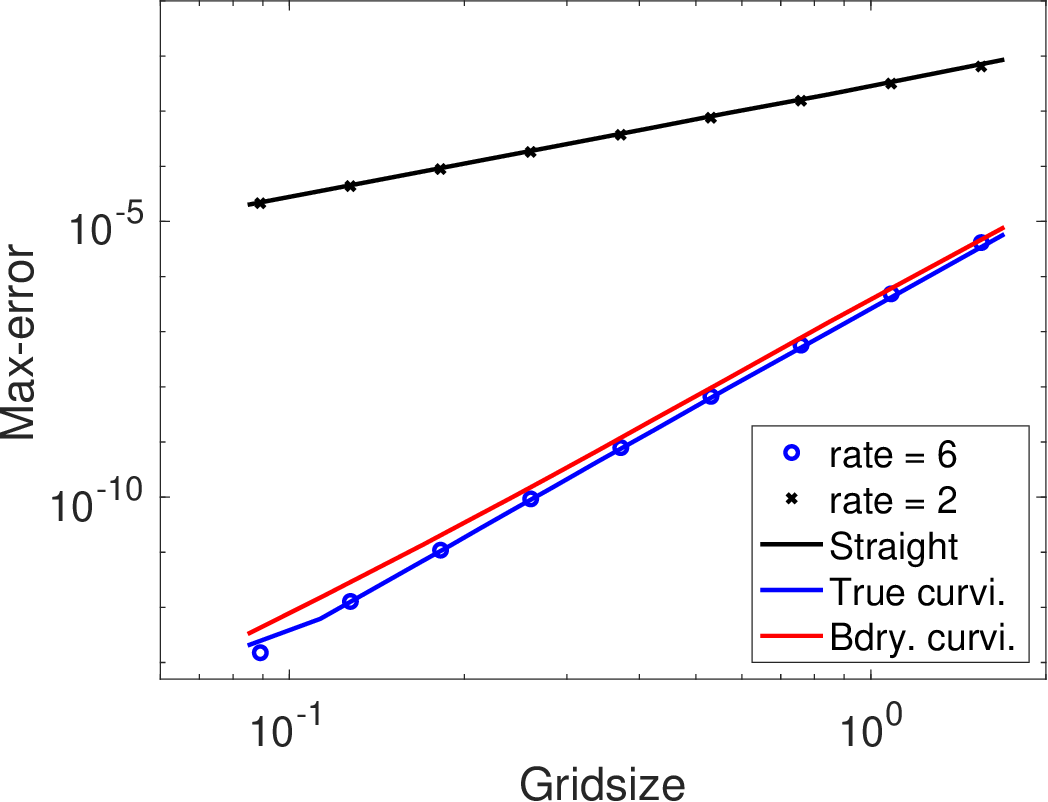}
\caption{Dirichlet problem in an annular sector. Errors measured in the $l^2$ and the $l^\infty$ norms for the three different sequences of grids. The top row is for $k=2$ and the bottom row is for $k=4$. These results are for the annular problem with homogenous Dirichlet boundary conditions. \label{fig:ann_mode_errs_dir}}
\end{center}
\end{figure}	

In this experiment we solve the wave equation  $u_{tt}=u_{xx}+u_{yy}$  with homogenous Dirichlet boundary conditions. The computational domain is the first quadrant of the annular region between two circles with radii $r_{\rm i}= 7.58834243450380438$ and  $r_{\rm o} = 14.37253667161758967$, i.e. the domain is described by $(x,y) = (r \cos \theta, r \sin \theta)$ where
\[
 r_{\rm i} \le r \le r_{\rm o}, \ \ 0 \le \theta \le \pi/2.
\]

On this domain the standing mode 
\[
u(r,\theta,t) = J_4(r) \sin(4\theta) \cos(t),
\] 
is an exact solution and we use this solution to specify the initial conditions and to compute errors.  

We consider the two cases $k = 2$ and $k=4$ and discretize on three different sequences of grids. The first sequence uses a straight sided approximation of the annulus  and all internal elements are quadrilaterals with straight sides. The second sequence uses curvilinear elements throughout the domain and all internal element boundaries conform with the polar coordinate transformation.  After the smooth mapping to the unit square, smooth variable coefficients emerge due to the geometric terms.  The metric terms are approximated with numerical differentiation using the values at the quadrature points. The third sequence is the same as the second sequence but all the internal element edges are straight. The meshes in the last sequence are likely close to those that would be provided by most grid generators. 

We evolve the numerical solution until time 1 by the time stepping discussed in Section \ref{sec:timestepping} of order of accuracy 4 when $k=2$ and 6 when $k=4$. Again, to get clean measurements of the error we report the time integrated errors 
\[
\left(\int_0^{1} \|u(\cdot,t) - u_h(\cdot,t)\|^2_{l^2} \, dt \right)^{\frac{1}{2}}, \ \ \ \ \int_0^{1} \|u(\cdot,t) - u_h(\cdot,t)\|_{l^\infty} \, dt,
\]
for the spatial $l^2$ and $l^\infty$ errors respectively. 

The results are displayed in Figure \ref{fig:ann_mode_errs_dir}. Here, as expected, we only observe second order accuracy independent of $k$ for the non-geometry-conforming meshes. We observe a convergence at the rate of $k+2$ in both the $l^2$-norm and $l^\infty$-norm for the geometry-conforming meshes.   
 The true curvilinear grids are covered by our theory since the variable coefficients due to the geometric transformation are smooth. For the third sequence of grids, since internal edges are straightsided,
the variable coefficients from the geometric transformation are not smooth across edges thus this configuration is not covered by our theory. Nonetheless, its convergence rate is still $k+2$.  

\subsubsection{Standing mode with Neumann conditions} \label{sec:square_mode}
\begin{figure}[htb]
\begin{center}
\includegraphics[width=0.45\textwidth]{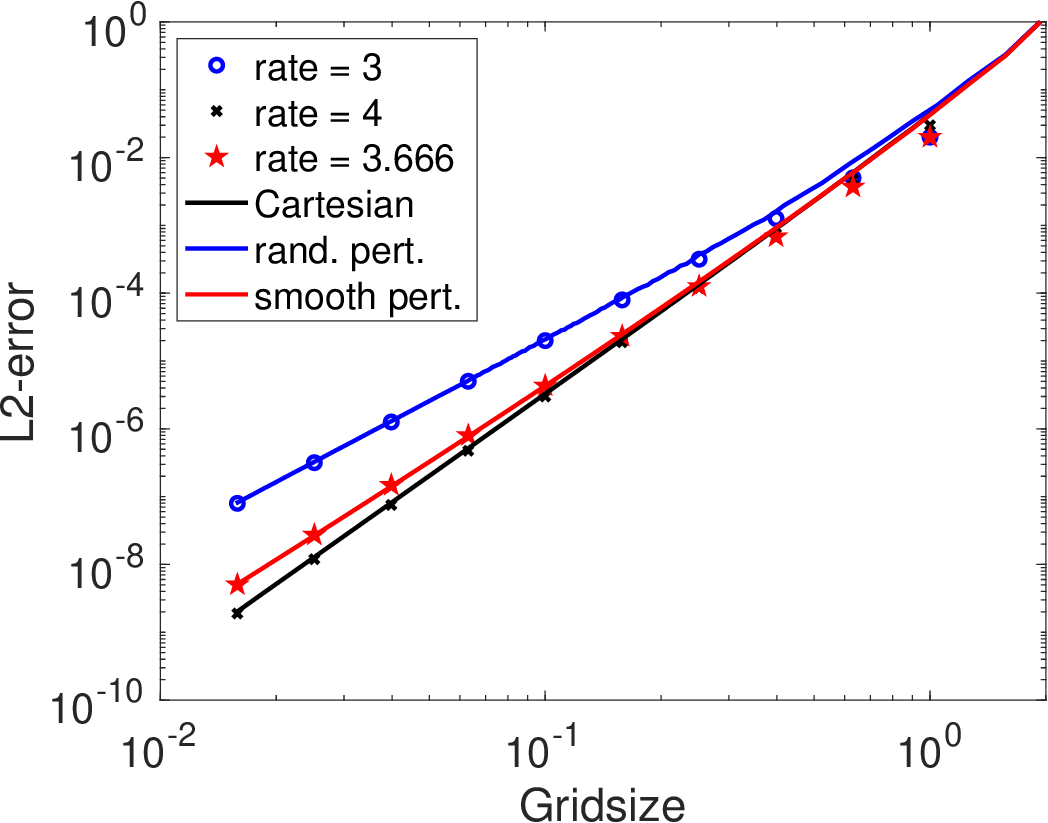}
\includegraphics[width=0.45\textwidth]{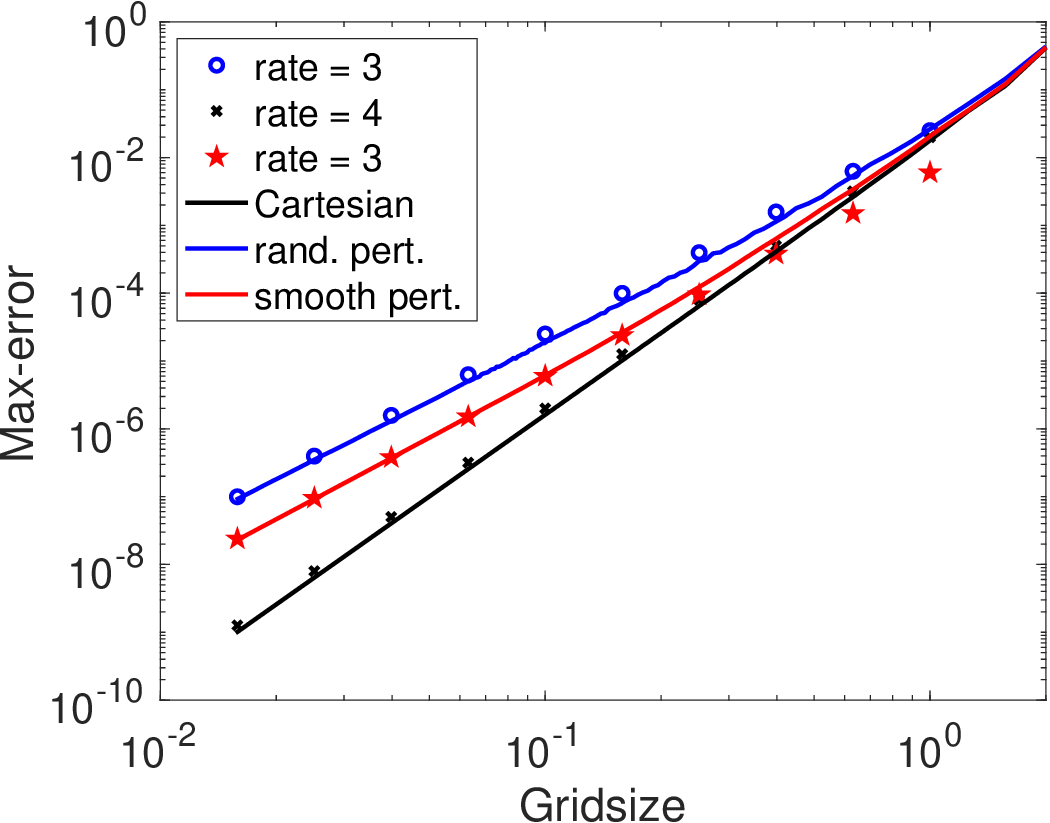}
\includegraphics[width=0.45\textwidth]{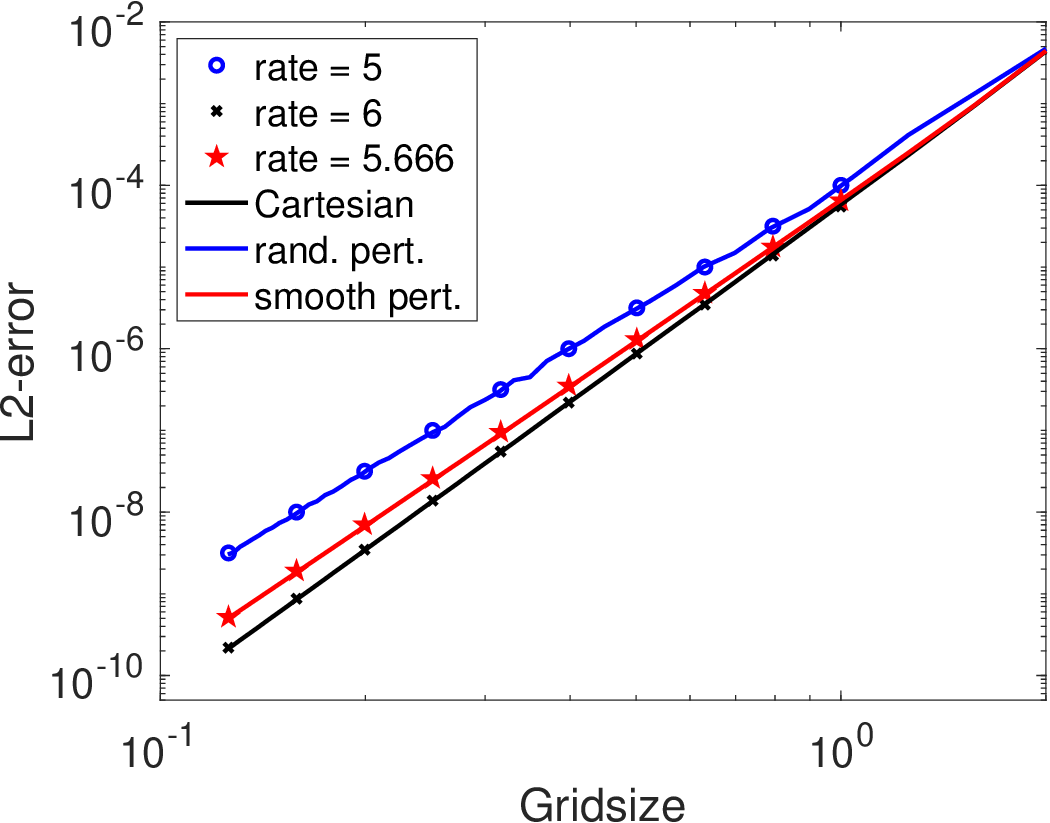}
\includegraphics[width=0.45\textwidth]{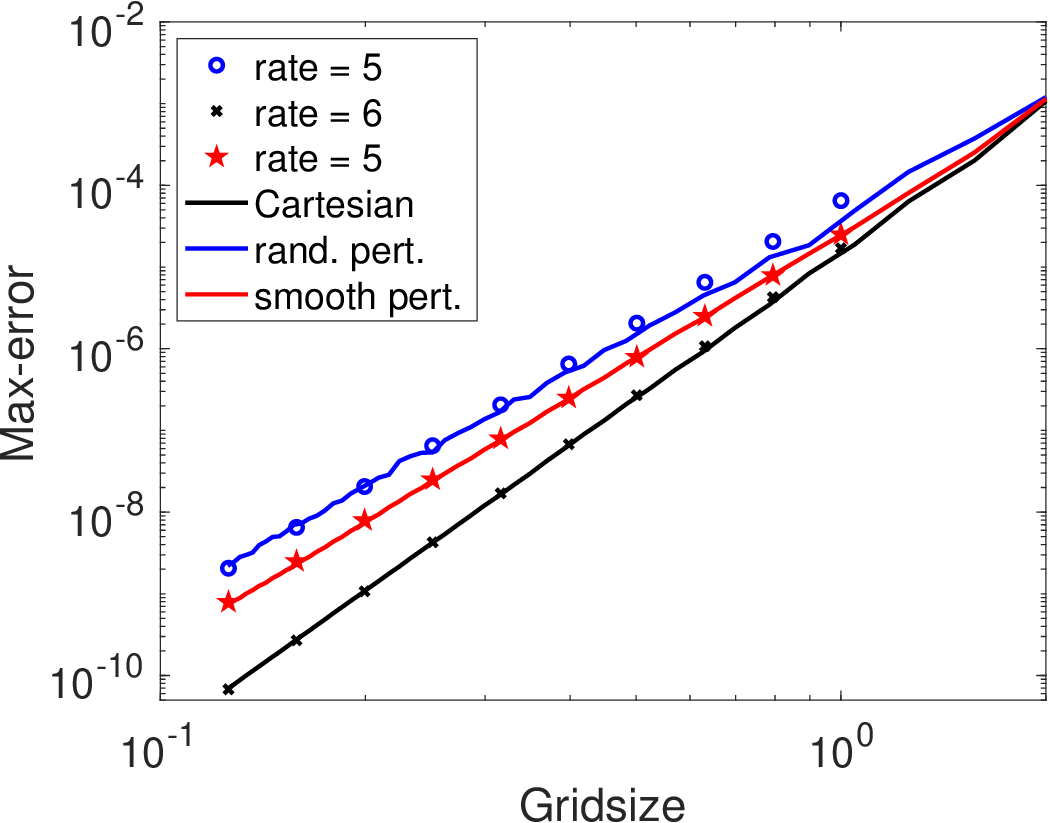}
\caption{Neumann square problem. Errors measured in the $l^2$ and the $l^\infty$ norms for the three different sequences of grids. The top row is for $k=2$ and the bottom row is for $k=4$. \label{fig:square_mode_errs}}
\end{center}
\end{figure}	
In this experiment we approximate the solution to the wave equation   $u_{tt}=u_{xx}+u_{yy}$   in the square domain $(x,y) \in [-\pi,\pi]^2$. Then with homogenous Neumann boundary conditions and initial data 
\[
u(x,y,0) = \cos (x) \cos(y), \ \ u_t(x,y,0) = 0,
\] 
the exact standing mode solution is
\[
u(x,y,0) = \cos (x) \cos(y) \cos(\sqrt{2}t).
\]

We consider the two cases $k = 2$ and $k=4$ and discretize on the same three sequences of grids as those used in \S \ref{sec:Dsquare_mode}. We evolve the numerical solution until time 5 as above and we report the time integrated errors as above. 

The results are displayed in Figure \ref{fig:square_mode_errs}. 
 For the Cartesian mesh  we observe a rate of convergence $k+2$ in the $\ell^2$-norm, confirming our theory. 
For the smoothly perturbed grids, which corresponds to   smooth variable coefficients resulting in mixed second order derivatives on the reference rectangular mesh,
  the rate in the $l^2$-norm appears to be $k+5/3$. As explained in Section \ref{sec-Neumann}, only $(k+\frac32)$-th order can be proven when both mixed second order derivatives and Neumann boundary conditions are involved.
 As in the Dirichlet case, the randomly perturbed grid yields rates of convergence   $k+1$ in both norms. 
\subsubsection{Standing mode in a sector of an annulus with Neumann conditions} \label{sec:ann_mode}
In this experiment we solve the the wave equation  $u_{tt}=u_{xx}+u_{yy}$  with homogenous Neumann boundary conditions. The computational domain is again the first quadrant of the annular region between two circles, now with radii $r_{\rm i}= 5.31755312608399$ and  $r_{\rm o} = 9.28239628524161$, to satisfy the boundary conditions.
On this domain the standing mode 
\[
u(r,\theta,t) = J_4(r) \cos(4\theta) \cos(t),
\] 
is an exact solution and we use this solution to specify the initial conditions and to compute errors.  

As in the previous examples we consider the two cases $k = 2$ and $k=4$ and discretize on the same three different sequences of grids as was used in the Dirichlet example above. We evolve the numerical solution until time 1 in the same way as above and we report the time integrated errors.

The results are displayed in Figure \ref{fig:ann_mode_errs}. 
 Here, the only grid satisfying our assumptions is the true curvilinear grid. For this case, the problem  is equivalent to solving a variable coefficient problem $u_{tt}=u_{rr}+\frac{1}{r^2}u_{\theta\theta}+\frac{1}{r} u_r$ on rectangular meshes for polar coordinates $(r,\theta)\in [r_i, r_o]\times [0,\frac{\pi}{2}]$. Since there are no mixed second order derivatives,
by our theory as explained in Section   \ref{sec-Neumann}, $(k+2)$-th order in the $\ell^2$-norm can still be proven. We can see that the rate for the true curvilinear grid is indeed 
$k+2$ in $\ell^2$-norm, confirming our theory for Neumann boundary conditions.

\begin{figure}[htb]
\begin{center}
\includegraphics[width=0.45\textwidth]{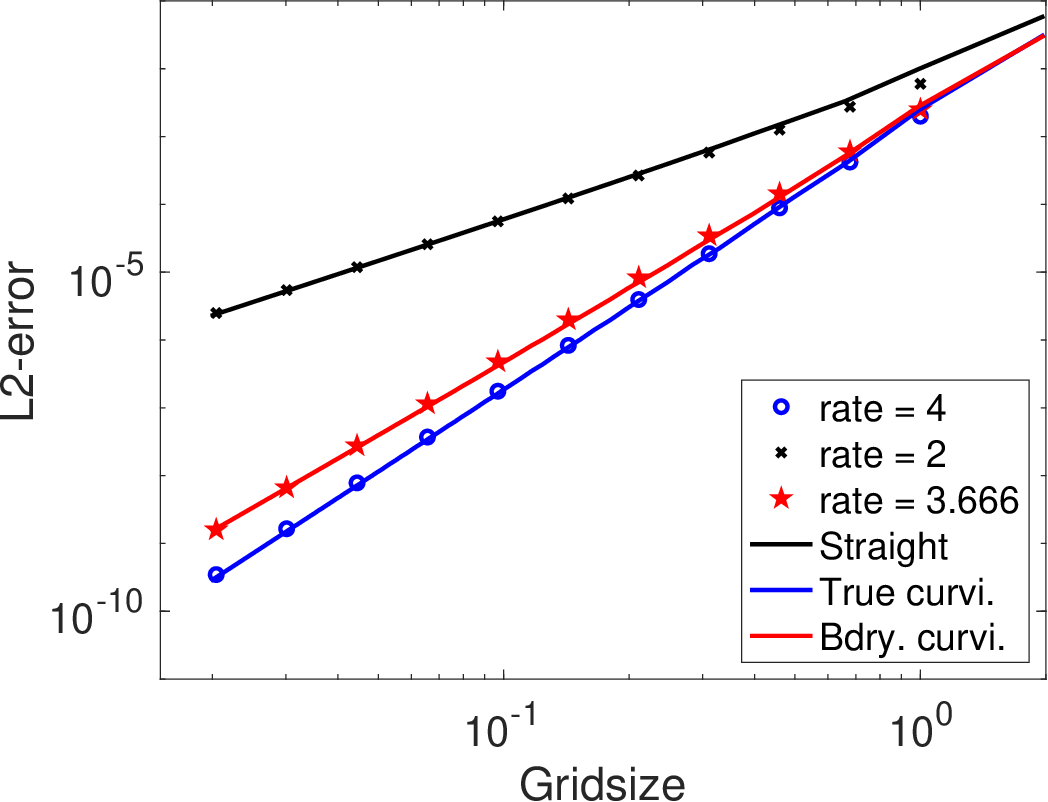}
\includegraphics[width=0.45\textwidth]{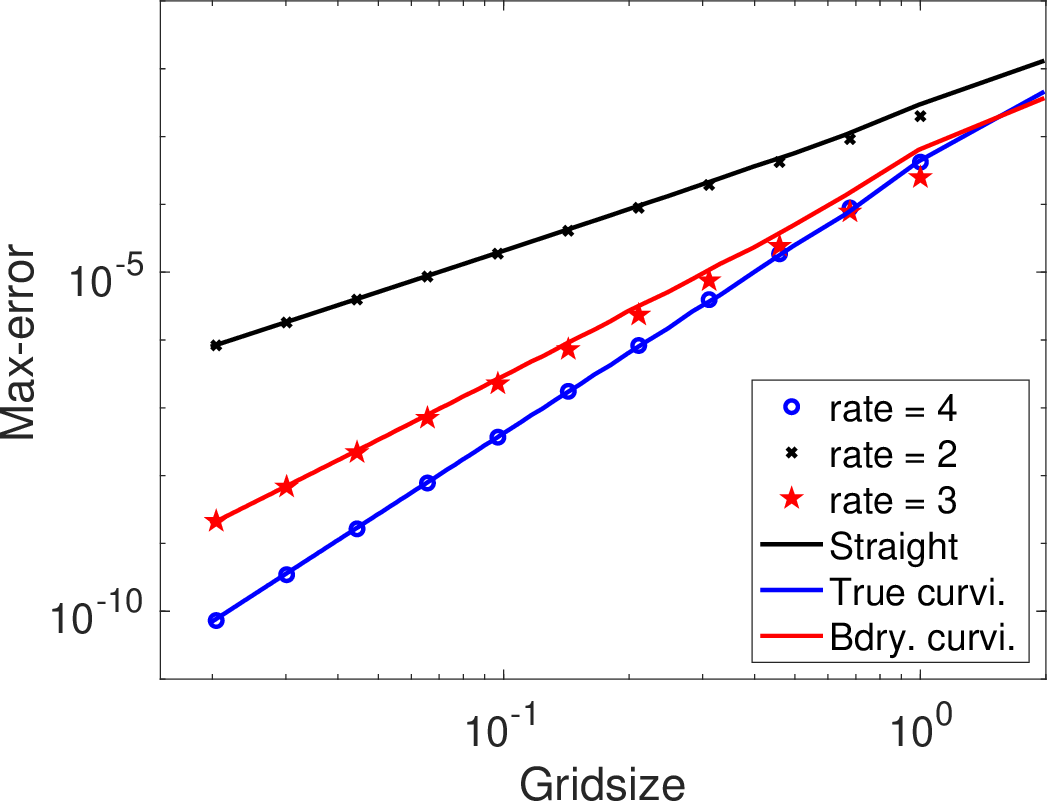}
\includegraphics[width=0.45\textwidth]{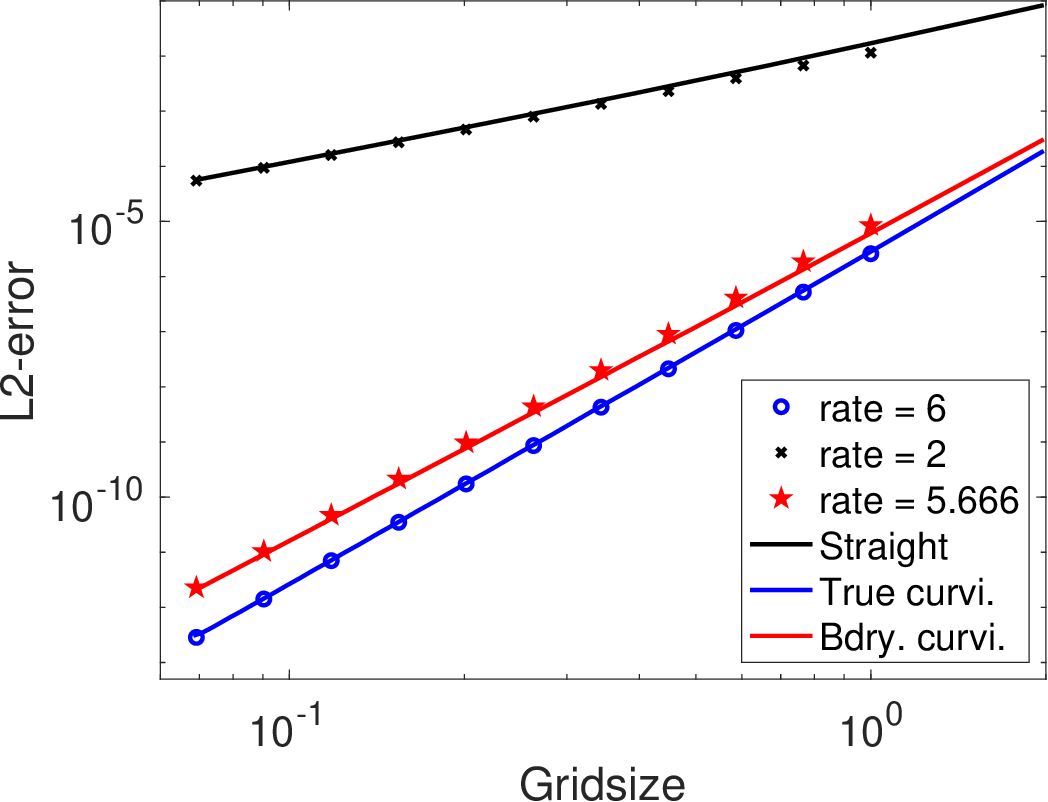}
\includegraphics[width=0.45\textwidth]{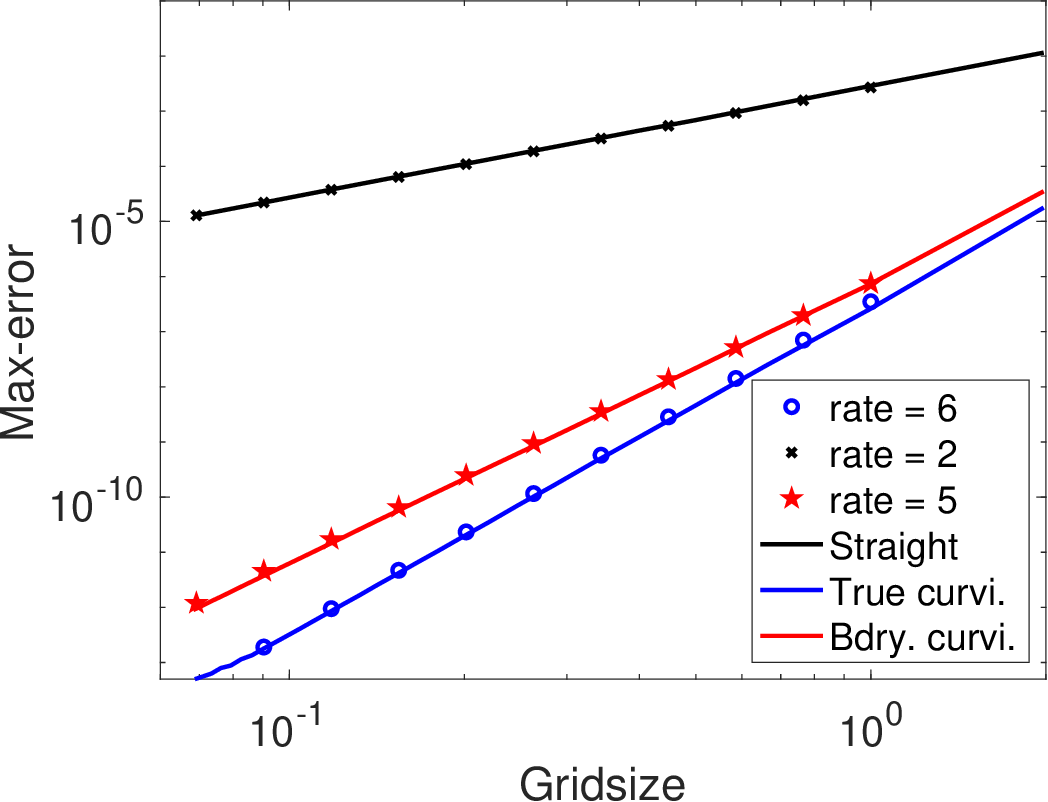}
\caption{Neumann annular sector problem. Errors measured in the $l^2$ and the $l^\infty$ norms for the three different sequences of grids. The top row is for $k=2$ and the bottom row is for $k=4$. These results are for the annular problem with homogenous Neumann conditions. \label{fig:ann_mode_errs}}
\end{center}
\end{figure}

\subsection{Numerical tests for the parabolic equation}
For problem \eqref{heat-equation-homo-bc} on the domain $\Omega = (0,\pi)^2$, we set $\mathbf a=\left( {\begin{array}{cc}
   a_{11} & a_{12} \\
   a_{21} & a_{22} \\
  \end{array} } \right)$ with 
\begin{equation*}
\begin{aligned}  
  a_{11} =& \left(\frac34 + \frac14 \sin(t)\right)\left(1+y+y^2+x\cos{y}\right),\\
  a_{12} =& a_{21}=\left(\frac34 + \frac14 \sin(t)\right)\left(1+\frac12(\sin(\pi x)+x^3)(\sin(\pi y)+y^3)+\cos(x^4+y^3)\right),\\
  a_{22} =&\left(\frac34 + \frac14 \sin(t)\right)\left(1+x^2\right),
  \end{aligned} 
  \end{equation*}
   $\mathbf b=\left( {\begin{array}{cc}
   b_{1} \\
   b_{2} \\
  \end{array} } \right)$ with 
  \begin{equation*}
\begin{aligned}  
b_{1}=\left(\frac34 + \frac14 \sin(t)\right)\left(\frac15+x\right),  
b_{2}=\left(\frac34 + \frac14 \sin(t)\right)\left(\frac15-y\right), 
    \end{aligned} 
  \end{equation*}
and $c = \left(\frac34 + \frac14 \sin(t)\right)\left(10+x^4y^3\right)$. For time discretization in \eqref{heat-scheme}, we use the third order backward differentiation formula (BDF) method. 
  Let $u(x,y,t) = (\frac34 + \frac14 \sin(t))(-\sin(y)\cos(y)\sin(x)^2)$ and we use a potential function $f$ so that $u$ is the exact solution. The time step is set as $\Delta t = \min(\frac{\Delta x}{10}, \frac{\Delta x}{10b_M}, \frac{f_M}{10})$, where $b_M = \max_{\mathbf{x}\in \Omega, i = 1,2}|b_{i}(0,\mathbf{x})|$ and $f_M = \max_{\mathbf{x}\in \Omega}|f(0,\mathbf{x})|$. The errors at time $T = 0.1$ are listed in Table \ref{table-heat}, in which we observe order around $k+2$ for the $\ell^2$-norm.
  
\begin{table}[h]
\centering
\caption{A two-dimensional parabolic equation with Dirichlet boundary conditions.}
\begin{tabular}{|c |c |c c|c c|}
\hline $Q^k$ polynomial  &  SEM Mesh & $l^2$ error  &  order & $l^\infty$ error & order \\
\hline
\multirow{4}{*}{k = 2} 
  &  $4\times 4$ & 8.34E-3 & - & 4.57E-3 & -\\\cline{2-6}
  &  $8\times 8$ & 6.59E-4 & 3.66 & 3.16E-4 & 3.85\\\cline{2-6}
  &  $16\times 16$ & 4.52E-5 & 3.86 & 2.36E-5 & 3.74 \\ \cline{2-6}
  &  $32\times 32$ & 2.91E-6 & 3.96 & 1.53E-6 & 3.94\\ 
\hline
\multirow{4}{*}{k = 3} 
  &  $4\times 4$ & 5.88E-4 & - & 1.71E-4 & -\\\cline{2-6}
  &  $8\times 8$ & 2.24E-5 & 4.71 & 7.56E-6 & 4.50\\\cline{2-6}
  &  $16\times 16$ & 7.49E-7 & 4.90 & 2.52E-7 & 4.91 \\ \cline{2-6}
  &  $32\times 32$ & 2.38E-8 & 4.97 & 8.06E-9 & 4.96\\ 
\hline
\multirow{4}{*}{k = 4} 
  &  $4\times 4$ & 4.26E-5 & - & 1.16E-5 & -\\\cline{2-6}
  &  $8\times 8$ & 7.62E-7 & 5.81 & 2.34E-7 & 5.63\\\cline{2-6}
  &  $16\times 16$ & 1.26E-8 & 5.92 & 4.12E-9 & 5.83 \\ \cline{2-6}
  &  $32\times 32$ & 2.00E-10 & 5.98 & 6.68E-11 & 5.95\\ 
  \hline
\end{tabular}
\label{table-heat}
\end{table}

\subsection{Numerical tests for the linear Schr\"{o}dinger equation}
For problem \eqref{schrodinger-eqn} on the domain $(0,2)^2$,  a fourth-order explicit Adams-Bashforth as time discretization for \eqref{schrodinger-scheme}. The solution and potential functions are as follows: $u(x,y,t) = e^{-it}e^{-\frac{x^2+y^2}{2}}$, $V(x,y)=\frac{x^2+y^2}{2}$, and $f(x,y,t)=0$. The time step is set as $\Delta t = \frac{\Delta x^2}{500}$. 
Errors at time $T = 0.5$ are listed in Table \ref{table-schrodinger}, in which we observe order near $k+2$ for the $\ell^2$-norm.

\begin{table}[h]
\centering
\caption{A two-dimensional linear Schr\"{o}dinger equation with Dirichlet boundary conditions.}
\begin{tabular}{|c |c |c c|c c|}
\hline $Q^k$ polynomial  &  SEM Mesh & $l^2$ error  &  order & $l^\infty$ error & order \\
\hline
\multirow{4}{*}{k = 2} 
  &  $4\times 4$ & 9.98E-4 & - & 6.36E-4 & -\\\cline{2-6}
  &  $8\times 8$ & 6.65E-5 & 3.91 & 4.01E-5 & 3.99\\\cline{2-6}
  &  $16\times 16$ & 4.10E-6 & 4.02 & 2.77E-6 & 3.85 \\ \cline{2-6}
  &  $32\times 32$ & 2.53E-7 & 4.02 & 1.79E-7 & 3.89\\ 
\hline
\multirow{4}{*}{k = 3} 
  &  $4\times 4$ & 4.06E-5 & - & 2.12E-5 & -\\\cline{2-6}
  &  $8\times 8$ & 1.12E-6 & 5.18 & 5.56E-7 & 5.26\\\cline{2-6}
  &  $16\times 16$ & 3.22E-8 & 5.12 & 1.75E-8 & 4.99 \\ \cline{2-6}
  &  $32\times 32$ & 1.05E-9 & 4.94 & 5.33E-10 & 5.04\\ 
\hline
\multirow{4}{*}{k = 4} 
  &  $4\times 4$ & 1.61E-6 & - & 5.86E-7 & -\\\cline{2-6}
  &  $8\times 8$ & 2.65E-8 & 5.92 & 9.93E-9 & 5.88\\\cline{2-6}
  &  $16\times 16$ & 3.95E-10 & 6.07 & 1.66E-10 & 5.90 \\ \cline{2-6}
  &  $32\times 32$ & 5.30E-12 & 6.22 & 2.66E-12 & 5.97\\ 
\hline
\end{tabular}
\label{table-schrodinger}
\end{table}

\section{ Concluding remarks}
\label{concludingremark}
 
We have proven that the $Q^k$ ($k\geq 2$) spectral element method, when regarded as a finite difference scheme, is a $(k+2)$-th order accurate scheme 
in the discrete 2-norm for  linear hyperbolic, parabolic and Schr\"{o}dinger equations with Dirichlet boundary conditions, under smoothness assumptions of the exact solution 
and the differential operator coefficients.  The same result holds for Neumann boundary conditions when there are no mixed second order derivatives. 
This explains the observed order of accuracy when the errors of the spectral element method are only measured at nodes of degree of freedoms.  
\bibliographystyle{siamplain}
\bibliography{ref,appelo,hag} 

\end{document}